\documentclass[11 pt]{article}
\usepackage{amsmath, amssymb,amsthm, setspace, palatino, wrapfig, caption, subfig, float}
\usepackage{graphicx}
\usepackage[all]{xy}
\theoremstyle{plain}
\newtheorem{thm}{Theorem}[section]
\newtheorem{mainthm}[thm]{Main Theorem}
\newtheorem{lemma}[thm]{Lemma}
\newtheorem{minilemma}[thm]{Mini-Lemma}
\newtheorem{prop}[thm]{Proposition}
\newtheorem{cor}[thm]{Corollary}
\newtheorem{maincor}[thm]{Main Corollary}
\newtheorem{defn}[thm]{Definition}
\newtheorem{append}[section]{Appendix}
\theoremstyle{definition}
\newtheorem{example}[thm]{Example}
\newcommand{\brac}{\langle{k}\rangle}

\newcommand{\Cob}{\text{Cob}_{d,\brac}^{\theta}}
\newcommand{\thomN}{B^{{{\theta}_N^*{\gamma}{^\perp}}(d,n)}}
\newcommand{\thom}{B^{-{\theta}^*{\gamma}_d}}
\newcommand{\plainthom}{G^{{-{\gamma}_d}}}
\newcommand{\plainthomN}{G^{{{\gamma}{^\perp}}(d,n)}}
\newcommand{\kzerospec}{{\Omega}^{{\infty}-1}_{\brac}\thom\brac}
\newcommand{\kzerohocolim}{{\Omega}^{{\infty}-1}\underset{\underline{2}^k_*}{\text{hocolim}}\;\thom\brac_*}
\newcommand{\struc}{B_{d+n}\brac}
\newcommand{\strucmap}{\theta}
\newcommand{\strucmapN}{{\theta}_N}
\newcommand{\strucmapNone}{{\theta}_{N+1}}
\newcommand{\fullstruc}{(B,{\sigma}',\theta)}
\newcommand{\sheafC}{C_{d,\brac}^\theta}

\newcommand{\sheafCtr}{C_{d,\brac}^{\theta,tr}}

\newcommand{\sheafD}{D_{d,\brac}^\theta}
\newcommand{\sheafDn}{D_{d,n,\brac}^\theta}
\newcommand{\sheafDtr}{D_{d,\brac}^{\theta,tr}}

\newcommand{\bighoco}{\underset{\underline{2}^{k-1}_*}{\text{hocolim}}\;\overline{{\partial}_k}E_*}
\newcommand{\bighocothom}{\underset{\underline{2}^{k-1}_*}{\text{hocolim}}\;\overline{{\partial}_k}\plainthom_*}
\newcommand{\smallhoco}{\underset{\underline{2}^{k-1}_*}{\text{hocolim}}\;{\partial}_kE_*}
\newcommand{\smallhocothom}{\underset{\underline{2}^{k-1}_*}{\text{hocolim}}\;{\partial}_k\plainthom_*}

\newcommand{\geocanon}{{\gamma}({d},{n})\brac}
\newcommand{\geoperpcanon}{{\gamma}^{\perp}({d},{n})\brac}
\newcommand{\twokob}{a\hspace{1pt}\in\hspace{1pt}\underline{2}^k}
\newcommand{\twokmor}{a<b\hspace{1pt}\in\hspace{1pt}\underline{2}^k}
\newcommand{\keucNless}{\mathbb{R}^k_+{\times}\mathbb{R}^{d+n-1}}

\title{Cobordism Categories of Manifolds with Corners}
\author{Josh Genauer}
\date{\today}

\begin{document}
\maketitle

\begin{abstract}
   In this paper we study the topology of cobordism categories of manifolds with corners.  Specifically, if $\text{Cob}_{d,\brac}$ is the category whose objets are a fixed dimension d, with corners of codimension $\leq\;k$,  then we identify the homotopy type of the classifying space $B\text{Cob}_{d,\brac}$ as the zero space of a homotopy colimit of  certain diagram of Thom spectra.  We also identify the homotopy type of the corresponding cobordism category when extra tangential structure is assumed on the manifolds.   These results   generalize  the results  of Galatius, Madsen, Tillmann and Weiss $\cite{GMTW}$, and their proofs are an adaptation of the methods of $\cite{GMTW}$.  As an application we describe the homotopy type of the category of open
and closed strings with a background space X,  as well as its higher dimensional
analogues.  This generalizes work of  Baas-Cohen-Ramirez $\cite{BCR}$ and Hanbury $\cite{LizsThesis}$.
\end{abstract}
\begin{center}
$$\text{Mathematics Subject Classification 57R90, 57R19, 55N22, 55P47}$$
\end{center}
\section{Introduction}

$\indent$Cobordism categories of manifolds with corners are of interest to both mathematicians and physicists.  These categories are particularly relevant when studying open-closed topological and conformal field theories as such field theories are monoidal functors from these cobordism categories to categories such as vector spaces, chain complexes or other symmetric monoidal categories.   Much work has already been done in this setting.  See for example Moore $\cite{Moore}$, Segal $\cite{Segal}$, Costello $\cite{Costello}$, Baas-Cohen-Ramirez $\cite{BCR}$, and Hanbury $\cite{LizsThesis}$.\\

The main result of our paper, is a formula for calculating the homotopy type of the classifying space of the cobordism category $\text{Cob}_{d,\brac}$ of manifolds of fixed dimension $d$ with corners of codimension ${\leq}k$.  The result is the zero space of a homotopy colimit over a certain diagram of Thom spectra.  More generally, let $\theta:B\rightarrow{BO}(d)$ be a fibration.  Then this recipe also allows us to compute the homotopy type of $\Cob$, the cobordism category whose morphisms are cobordisms with corners together with structure on the tangent bundle determined by $\theta$.  The precise definition of these categories will be given in $\textbf{section 4}$.\\
  
In some interesting cases we are able to evaluate this homotopy colimit explicitly.  A fibration of particular interest is  $BSO(d){\times}X{\rightarrow}BO(d)$, which is the composition
$$BSO(d){\times}X\overset{\text{project}}{\longrightarrow}{BSO(d)}\overset{\text{include}}{\longrightarrow}{BO(d)}.$$
The corresponding tangential structure is an orientation on the manifold, together with a map to a background space $X$.  We will call this category $\text{Cob}_{d,\brac}(X)$ and we prove that $B\text{Cob}_{d,\brac}(X)$  shares its homotopy type with $X_+{\wedge}B{Cob}_{d,\brac}$, a generalized homology theory of $X$.  See the third and last part of $\textbf{section 6}$ for proof.  The category $\text{Cob}_{d,\brac}(X)$ is particularly important in the 2-dimensional case where this category of ``surfaces in $X$" plays a role in both Gromov-Witten theory as well as string topology.  Variations on this category have been studied in several contexts, see Baas $\cite{Baas}$, Sullivan $\cite{Sull}$, Cohen-Madsen $\cite{Co-Mad}$, and Hanbury.\\

The kind of corners on manifolds we consider have been studied by J\"{a}nich $\cite{Janich}$ and more recently by $\cite{Laures}$.  The main feature of the corner structure under consideration is, as Laures showed, that such manifolds embed in $\mathbb{R}^k_+{\times}\mathbb{R}^N$ in such a way that the corner structure is preserved.  Here $\mathbb{R}_+$ is the nonnegative real numbers and $\mathbb{R}^k_+$ is the cartestian product $(\mathbb{R}_+)^k$.  In turn, the corner structure induces a stratification of the underlying manifold by nested submanifolds whose inclusion maps into one another form a cubical diagram.  In this way these manifolds with corners naturally live in the category of cubical diagrams of spaces which Laures referred to as $\brac$-spaces.  Laures originally found cobordisms of $\brac$-manifolds while investigating Adams-Novikov resolutions, but $\brac$-manifolds have appeared in other contexts as well.  They appear in open-closed string theory mentioned above.  R. Cohen, Jones, and Segal found that the space of gradient flow lines of a Morse function on a closed manifold $M$ naturally has the structure of a framed manifold with corners $\cite{CJS}$.  R. Cohen has used these ideas in his work on Floer theory $\cite{Cohen}$.  Lastly, $\brac$-manifolds naturally appear in the author's future work on stable automorphism groups of closed manifolds.\\

Our paper is also heavily indebted to the work of Galatius, Madsen, Tillmann, and Weiss $\cite{GMTW}$, who calculated the homotopy type of the cobordism category of closed manifolds as the zero space of a Thom spectrum.  This paper is a generalization of their result and uses their techniques.  From this perspective, the cost for considering manifolds with corners is that the classifying space of the cobordism category has the homotopy type of the zero space of a homotopy colimit over a cubical diagram of Thom spectra instead of a single spectrum.\\

The paper is organized as follows.  In $\textbf{section 2}$ we discuss the cubical diagrams of spaces which, in the tradition of Laures's paper, we call $\brac$-spaces.   In $\textbf{section 3}$ we proceed on to vector bundles in the $\brac$-space setting, and we introduce the analogue of prespectra in category of diagrams.  In $\textbf{section 4}$ we define cobordism categories of manifolds with corners and state the main theorem of the paper.  We prove the main theorem in $\textbf{sections 5}$.  In $\textbf{section 6}$ we describe some applications including an explicit description of the homotopy type of $\Cob$ as the zero space of well known spectra.  $\textbf{Section 7}$ is an appendix of some results from differential topology and bundle theory modified to the $\brac$-manifold setting.  We need these facts for the proof in $\textbf{section 5}$ and are used nowhere else.\\

The list of people the author would like to thank possibly stretches longer than this paper, so we restrict attention to three dear people.  The author thanks David Ayala for many interesting conversations.  The author thanks Soren Galatius for patiently explaining cobordism categories amongst other things and for his laconic, earnest support.  Lastly, the author thanks Ralph Cohen, wise in things mathematical and otherwise.  Without Prof. Cohen's sage guidance, this project would not exist. \\

The results of this paper are part of the author's Ph.D. thesis, written under the direction of Prof. Ralph Cohen at Stanford University.  
\newline
\newline

\section{$\brac$-manifolds and their embeddings}
$\indent$In this section we give an intrinsic definition of $\brac$-manifolds and discuss some of their basic properties.  Most importantly, we explain what we mean by an embedding of a $\brac$-manifold and show that the space of embeddings of any $\brac$-manifold into (theorem \ref{mainthm2}).  This generalizes the result of Laures which showed that the space of such embeddings is nonempty.  Next we shall introduce $\brac$-spaces, which are (hyper)cubical diagrams of spaces.  Just as smooth manifolds form a subcategory of topological spaces so too do $\brac$-manifolds naturally sit inside the category of $\brac$-spaces.  In fact, all the usual objects in differential topology such as tangent bundles, normal bundles, embeddings, and Pontrjagin-Thom constructions have analogues in the setting of $\brac$-spaces.\\
 
  As motivation, we describe a way to generate examples of $\brac$-manifolds.  To this end, let $a=(a_1,...,a_k)$ be a k-tuple of binary numbers and let $\mathbb{R}^k(a)$ be the subspace of $\mathbb{R}^k$ consisting of k-tuples $(x_1,...,x_k)$ such that $x_i=0$ if $a_i=0$.  Now suppose we have an embedded closed submanifold $$N\hspace{3pt}\hookrightarrow\hspace{3pt}\mathbb{R}^k{\times}\mathbb{R}^n$$
 that is transverse to $\mathbb{R}^k(a){\times}\mathbb{R}^n$ for all possible k-tuples $a$.  As pointed out in $\cite{Laures}$, 
the intersection of this submanifold with $\mathbb{R}_+^k{\times}\mathbb{R}^n$ 
is a manifold with corners.  The $k$ codimension one faces of this manifold are given by $${\partial}_iN{\hspace{3pt}}:=\hspace{3pt}N\hspace{3pt}{\cap}\hspace{3pt}(\mathbb{R}^k_+(a){\times}\mathbb{R}^n)$$ for $a=(1,...,1,0,1,...,1)$.  All of the higher codimension faces are of the same form but with other sequences $a$ (it is entirely possible that these faces may be empty).  Moreover the higher codimension strata fit together by inclusions induced by the inclusions of $\mathbb{R}^k_+(a){\times}\mathbb{R}^n$ into $\mathbb{R}^k_+(b){\times}\mathbb{R}^n$ when $a_i\;{\leq}\;b_i$ for all $1\;{\leq}\;i\;{\leq}\;k$.\\

In $\cite{Laures}$ some basic features of $\brac$-manifolds are observed: each of the submanifolds $M(a)$ is an $\langle{a}\rangle$-manifold and the product of an $\brac$-manifold with an $\langle{l}\rangle$-manifold is a $\langle{k+l}\rangle$-manifold. 

Now we give an intrinsic definition of $\brac$-manifolds as a special kind of manifolds with corners.  The basic definition of a manifold with corners can be found in chapter 14 of J. Lee's book on manifolds, $\cite{JLee}$.  Let $U$ and $V$ be open subsets of $\mathbb{R}^k_+$.  We say $f:U{\rightarrow}V$ is a diffeomorphism if $f$ extends to an honest diffeomorphism $F$ between two open sets in $U',V'{\subset}\mathbb{R}^k$ such that $U=U'\cap\mathbb{R}^k_+$ and $V=V'\cap\mathbb{R}^k_+$.  If $O$ is an open subset of a topological manifold with boundary, we say a pair $(O,\phi)$ is a chart with corners if $\phi$ is a homeomorphism from $O$ to an open set $U\;{\subset}\;\mathbb{R}^k_+$.  Two charts $(O,\phi)$, $(P,\psi)$ are compatible if $\psi{\circ}{\phi}^{-1}:{\phi}(O{\cap}P)\rightarrow\psi(O{\cap}P)$ is a diffeomorphism in the sense described above.
\begin{defn}
A smooth structure with corners on a topological manifold $M$ is a maximal collection of compatible charts with corners whose domains cover $M$.  A topological manifold together with a smooth structure with corners is called a smooth manifold with corners.
\end{defn}

As in $\cite{Laures}$, for $x=(x_1,...,x_k)\;{\in}\;\mathbb{R}^k_+$, $c(x)$ is defined to be the number of $x_i$'s that are zero.  For $m{\in}M$, $c(m)$ is defined as $c(\phi(x))$.  This number is independent of the chart $\phi$.  A $\emph{face}$ of $M$ is a union of connected components of the space $\{m{\in}M\;|\;c(m)=1\;\}$.
\begin{defn}A $\brac$-manifold is a manifold $M$ with corners together with an ordered k-tuple $(\partial{_1}M,...,\partial{_k}M)$ of subspaces satisfying the following properties:\\
i) each $\partial{_i}M$ is a closure of a face of $M$,\\
ii) each $m{\in}M$ belongs to $c(m)$ of the ${\partial}_iM$'s,\\
iii) for all $1{\leq}i{\neq}j{\leq}k$, $\partial{_i}M\;{\cap}\;\partial{_j}M$ is the closure of a face of $\partial{_i}M$ and $\partial{_j}M$.\end{defn}

Examples of $\brac$-manifolds are closed manifolds ($\langle{0}\rangle$-manifolds), manifolds with boundary ($\langle{1}\rangle$-manifolds) and the unit k-dimensional cube (a $\brac$-manifold).  $\brac$-manifolds have the feature that they admit the structure of a $\langle{k+1}\rangle$-manifold.  To do this, set $\partial{_{k+1}}M=\emptyset$.\\

For more interesting examples, consider the the closed unit 3-ball that has a pinch around the equator (a $\langle{2}\rangle$-manifold) and the tetrahedron which is a $\langle{4}\rangle$-manifold (it is also easy to check the tetrahedron does not admit the structure of a $\langle{3}\rangle$-manifold).  The prototypical $\brac$-manifold is $\mathbb{R}^k_+$.   The compactification of $\mathbb{R}^k_+$ is a space of fundamental interest to us.   To that end,  define
 $$(\mathbb{R}^k_+)^c:=\text{the one point compactification of }\mathbb{R}^{\infty}.$$
It is easy to see that $(\mathbb{R}^k_+)^c$ is a $\brac$-manifold and the inclusion map 
$$\mathbb{R}^k_+\longrightarrow(\mathbb{R}^k_+)^c$$

\noindent 
is a map of $\brac$-manifolds.  As a word of warning, it should be noted that 
ers do not admit the structure of a $\brac$-manifold.  A simple example is the cardioid.\\

\begin{figure}[h]
\begin{center}
\vspace{-30pt}
\includegraphics[bb = 0 0 235 180, width=0.2\textwidth]{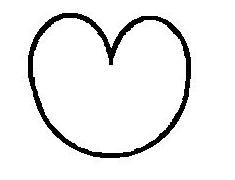}
\caption{The cardioid is a manifold with corners, but is not a $\brac$-manifold.}
\end{center}
\end{figure}

The reader has surely noticed the strata of a $\brac$-manifold and the inclusion maps between them form a (hyper)cubical diagram of spaces.  Such diagrams will be called $\brac$-spaces and now we give them proper definitions and highlight their basic features.\\

Let $\underline{2}^k$ be the poset whose objects are k-tuples of binary numbers.  Given two elements $a$ and $b\;{\in}\;\underline{2}^k$, $a$ is less than $b$ if when we write $a=(a_1,...,a_k)$ and $b=(b_1,...,b_k)$ we have $a_i\;{\leq}\;b_i$ for each $1\;{\leq}\;i\;{\leq}\;k$.  In this instance we shall write $a<b$.

\begin{defn}A $\brac$-space is a functor from $\underline{2}^k$ to $\emph{Top}$, the category of topological spaces.\end{defn}

\begin{defn}$\brac$-Top is the category whose objects are $\brac$-spaces, and whose morphisms are the natural transformations of these functors.\end{defn}

To obtain a $\brac$-space from a $\brac$-manifold $M$, set $M(($1,...,1$))$ equal to $M$ itself and for $a{\neq}1\;{\in}\;\underline{2}^k$ set 

$$M(a)\hspace{5pt}=\hspace{5pt}\underset{\{i|a_i=0\}}{\bigcap}{\partial}_iM.$$
The maps $M(a<b)$ are the inclusion maps.  We have just described a functor\\
$$\brac-\textit{manifolds}\;\;\longrightarrow\;\;\brac-Top$$
We continue on to describe some functors between the $\brac$-$\textit{Top}$ categories that are of fundamental importance. 
First, there is the product functor 
$$\underline{2}^{k+l}\;\;{\longrightarrow}\;\;\underline{2}^k{\times}\underline{2}^l\;\;{\longrightarrow}\;\;\textit{Top}{\times}\textit{Top}\;\;{\longrightarrow}\;\;\textit{Top}$$
which extends the product functor for $\brac$-manifolds.\\

For $1\;{\leq}\;i\;{\leq}\;k$ there are functors $\partial_{i},\overline{\partial_{i}}:\underline{2}^{k-1}\rightarrow\underline{2}^k$ that
insert a $0$ and $1$ respectively in between the ${i-1}^{st}$ and the $i^{th}$ slot of $a\,{\in}\,\underline{2}^{k-1}$.  Each of these functors induces
a functor
$$\partial_{i},\overline{\partial_{i}}:\brac-Top\;\;{\longrightarrow}\;\; \langle{k-1}\rangle-Top.$$
In the other direction, for $1{\leq}i{\leq}k$ there are functors
$\mathcal{I}_i:\underline{2}^k\rightarrow\underline{2}^{k-1}$ that forget the i$^{th}$ component of the vector $\twokob$.
Each of the functors $\mathcal{I}_i$ induces a functor $$\mathcal{I}_i:\langle{k-1}\rangle-Top\;\;{\longrightarrow}\;\;\brac-Top.$$
  Finally, there is a functor 
$$\emph{Top}\;{\longrightarrow}\;\brac-\textit{Top}$$ which sends a space $X$ to the constant $\brac$-space where every space $X(a)$ is equal to $X$ and every map $X(a<b)$ is the identity.\\

Next we give an analogue of homotopy in the $\brac$-$\textit{Top}$ category.  Let $f_0,f_1:X{\rightarrow}Y$ be $\brac$-maps.  Consider $[0,1]$ as a $\langle{0}\rangle$-space; thus $X{\times}I$ is also a $\brac$-space.  After identifying $X$ with $X{\times}\{0\}$ and $X{\times}\{1\}$ there are inclusion maps ${\iota}_0,{\iota}_1:X{\rightarrow}X{\times}I$ respectively.

\begin{defn}A homotopy between $\brac$-maps $f_0$ and $f_1$ is a $\brac$ map $F:X{\times}I{\rightarrow}Y$ such that $f_0={\iota}_0{\circ}F$ and $f_1={\iota}_1{\circ}F$.\end{defn}

We should mention there are a number of conventions we hold for elements of $\underline{2}^k$. Provided it leads to no unambiguity, we use $0$ as shorthand for $(0,...,0)$, and $1$ will stand for $(1,...,1)$.  We reserve $e_i$ to be the k-tuple whose components are all zero except for the $i$th. When $\twokmor$, we define 
$$b-a\;\;:=\;\;(b_1-a_1,...,b_k-a_k)$$
and for an element $\twokob$, $a'$ is defined to be $1$-$a$.   Lastly, $|a|$ denotes the number of non-zero component of $a$.\\

Now we discuss embeddings of $\brac$-manifolds.
\begin{defn} A neat embedding of a $\brac$-manifold $M$ is a natural transformation $\iota:M\;{\rightarrow}\;\mathbb{R}^{k}_+{\times}\mathbb{R}^N$ for some $N\;{\geq}\;0$ such that:\\
i) $\twokob$, $\iota(a)$ is an embedding,\\
ii) for all $\twokmor$, $M(b)\hspace{3pt}{\cap}\hspace{3pt}\mathbb{R}^k_+(a){\times}\mathbb{R}^N=M(a),$\\
iii) these intersections are perpendicular, i.e. there is some $\epsilon>0$ such that $$M(b)\hspace{3pt}{\cap}\hspace{3pt}(\mathbb{R}^k_+(a)\hspace{1pt}{\times}\hspace{1pt}[0,\epsilon]^k(b-a)\hspace{1pt}{\times}\hspace{1pt}\mathbb{R}^N)=M(a){\times}[0,\epsilon]^k(b-a).$$
We let $\text{Emb}(M,\mathbb{R}^k_+{\times}\mathbb{R}^N)$ denote the space of all neat embeddings.\end{defn}

In $\cite{Laures}$, Laures proved that every $\brac$-manifold neatly embeds in\\ $\mathbb{R}^{k}_+{\times}\mathbb{R}^N$ for some $N>>0$.  This shows that any $\brac$-manifold is $\brac$-diffeomorphic to one of the examples constructed in the beginning of the chapter.  Just like with closed manifolds, not only is the embedding space of a $\brac$-manifold nonempty, but for sufficiently large $N$ it is weakly contractible.  The main result of this section is

\begin{thm}\label{mainthm2}$\text{Emb}(M,\mathbb{R}^k_+{\times}\mathbb{R}^N)$ is weakly contractible provided $N$ is sufficiently large.\end{thm}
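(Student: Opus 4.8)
The plan is to mimic the standard proof that the embedding space of a closed manifold into $\mathbb{R}^N$ is weakly contractible for large $N$, adapting each step to the $\brac$-setting by using the neat/perpendicularity condition to control behavior near the corners. Concretely, I would show that for any compact parameter space $K$ (or more precisely for spheres $S^m$, to verify the vanishing of all homotopy groups) and any family $K \to \text{Emb}(M, \mathbb{R}^k_+ \times \mathbb{R}^N)$, the family can be contracted to a constant, possibly after stabilizing by a further $\mathbb{R}^M$. Since a weak equivalence is detected on homotopy groups, it suffices to prove that $\pi_m(\text{Emb}(M,\mathbb{R}^k_+\times\mathbb{R}^N)) = 0$ for all $m$ once $N$ is large relative to $\dim M$, $k$, and $m$; the colimit over $N$ of these spaces is contractible, and I want the individual terms to already be highly connected.

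The key steps, in order: (1) Reduce to a general-position statement. Given two neat embeddings $\iota_0, \iota_1 \colon M \to \mathbb{R}^k_+ \times \mathbb{R}^N$, or a family parametrized by $S^m$, first push everything into $\mathbb{R}^k_+ \times \mathbb{R}^{N} \times \mathbb{R}^{M'} \hookrightarrow \mathbb{R}^k_+ \times \mathbb{R}^{N + M'}$ and use a dimension count: the set of affine/linear directions that fail to keep the relevant incidences transverse has positive codimension, so for $N$ large the family of embeddings becomes isotopic to one landing in a fixed affine subspace, and two such are linearly isotopic. The point is that all the usual transversality and Whitney-trick dimension counts go through face-by-face, because on each stratum $M(a)$ we are looking at an honest embedding of a manifold-with-corners into $\mathbb{R}^k_+(a) \times \mathbb{R}^N$ (or its perpendicular thickening). (2) Promote isotopies of the strata to a neat isotopy of $M$. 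Here one builds the isotopy by downward induction on $c(m)$, i.e. from the deepest corner stratum outward, at each stage extending a neat isotopy already defined on $\bigcup_{|a| < j} M(a)$ over the next stratum using the collar structure guaranteed by condition (iii) of neat embedding — this is exactly where the $[0,\epsilon]^k(b-a)$ product neighborhoods let one extend a stratumwise isotopy to a $\brac$-isotopy without breaking neatness. (3) Package (1) and (2) as: the inclusion of the space of neat embeddings into a fixed large affine $\mathbb{R}^k_+ \times \mathbb{R}^{N_0} \subset \mathbb{R}^k_+ \times \mathbb{R}^N$ induces a surjection on $\pi_m$ with the "standard" embedding in the image, and any two embeddings into the affine subspace are connected; conclude $\pi_m = 0$ for $N \gg 0$. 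The needed differential-topology lemmas in the $\brac$-manifold setting (existence of neat collars, neat tubular neighborhoods, neat isotopy extension) are precisely the kind of thing collected in the appendix referred to in Section 7, so I would invoke those.

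I expect the main obstacle to be Step (2): making the inductive extension of isotopies genuinely natural over the cube $\underline{2}^k$ while simultaneously preserving the perpendicularity condition at every face. Stratumwise one has plenty of room, but the isotopies on adjacent strata must agree on their common intersection $M(a) \cap M(b)$ and must remain compatible with the product collar neighborhoods, so the extension has to be carried out with collars that are chosen coherently across the whole diagram — essentially one needs a $\brac$-equivariant version of the collar-uniqueness and isotopy-extension theorems, with all choices made functorially in $a \in \underline{2}^k$. Once that bookkeeping is set up correctly (uniform $\epsilon$, nested collars, inductive extension from deep strata outward), the transversality dimension counts in Step (1) are routine and identical to the closed case, with $k$ extra bounded coordinates that play no role in the codimension estimates. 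I would therefore spend the bulk of the proof carefully stating and using the neat collar and neat isotopy extension results, and treat the general-position argument as a direct adaptation of the classical one.
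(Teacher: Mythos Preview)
Your approach is plausible but differs substantially from the paper's, and the paper's is both shorter and avoids the very obstacle you correctly flag as the hard part.

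The paper does \emph{not} build isotopies stratum by stratum. Instead it observes that a class $[\phi]\in\pi_n\big(\text{Emb}(M,\mathbb{R}^k_+\times\mathbb{R}^N)\big)$ has an adjoint $\overline{\phi}\colon M\times S^n\hookrightarrow\mathbb{R}^k_+\times\mathbb{R}^{N+n+1}$ which, after thickening by a collar $[0,\epsilon)$, is a neat $\langle k{+}1\rangle$-embedding with the sphere direction playing the role of the new face $\partial_{k+1}$. Nullhomotoping $\phi$ is then equivalent to extending this to a neat $\langle k{+}1\rangle$-embedding of $M\times D^{n+1}$. So the entire theorem is reduced to a single \emph{relative existence} statement (Proposition~\ref{proptomain2}): given a neat embedding prescribed near a closed set, one can extend it to all of $M$ after possibly enlarging $N$. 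That proposition is proved by induction over the poset $\underline{2}^k$ using partitions of unity and flows along normal vector fields (Lemma~\ref{extendtoneighborhood}), essentially following Laures.

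What this buys the paper: no isotopy extension theorem, no coherent-collar bookkeeping, no general-position argument over a parameter space. All of your Step~(2) difficulties---making stratumwise isotopies agree on intersections while preserving the perpendicularity condition---simply do not arise, because the parameter disk has been absorbed as an extra corner direction and one only ever needs to \emph{produce} an embedding, not deform one. Your route would work, but the $\brac$-isotopy extension theorem you plan to invoke is not actually in the appendix (which contains transversality, Ehresman, Phillips, and bundle stabilization, but not isotopy extension), so you would have to develop it yourself; the paper's trick sidesteps that entirely.
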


\begin{proof}The theorem follows from a proposition which we now describe.\\

Let $M$ be a $\brac$-manifold.  Let $A$ be a closed set in $M(1)$ and let $U$ be an open set in $M(1)$ containing $A$.  We obtain $\brac$-space structures on $A$ and $U$ by setting $A(a)$ equal to $A\cap{M}(a)$ and $U(a)$ equal to $U\cap{M}(a)$.  Moreover,  $U$ is a $\brac$-manifold.  Let $e:U\rightarrow\mathbb{R}^k_+{\times}\mathbb{R}^N$ be a neat $\brac$-embedding.  Finally, let us define $\text{Emb}(M,\mathbb{R}^k_+\times\mathbb{R}^N,e)$ to be the set of neat $\brac$-embeddings of $M$ that restrict to $e$ on $A$.

\begin{prop}\label{proptomain2}$\text{Emb}(M,\mathbb{R}^k_+{\times}\mathbb{R}^{N},e)$ is nonempty for $N$ sufficiently large.\end{prop}

$\textit{Proof of theorem }$\ref{mainthm2}$\textit{ from proposition }$\ref{proptomain2}. Let ${\phi}$ represent an element of ${\pi}_n(\text{Emb}(M,\mathbb{R}^k_+\times\mathbb{R}^N))$.  The product of the adjoint of $\phi$ with the standard embedding of $S^n$ in $\mathbb{R}^{n+1}$ yields an embedding of $\brac$-manifolds,
$$\overline{\phi}:M{\times}S^n\;{\longrightarrow}\;\mathbb{R}^k_+{\times}\mathbb{R}^{N+n+1}.$$ This map may be extended to a neat smooth embedding of $\langle{k+1}\rangle$-manifolds $$\overline{\phi}:[0,{\epsilon}){\times}M{\times}S^n\;{\longrightarrow}\;\mathbb{R}^{k+1}_+{\times}\mathbb{R}^{N+n+1}.$$  Now apply proposition \ref{proptomain2} to find a $\langle{k+1}\rangle$-embedding
$$\overline{\Phi}:M{\times}D^{n+1}\;\longrightarrow\;\mathbb{R}^{k+1}_+{\times}\mathbb{R}^{N'}$$
which extends $\overline{\phi}$.
The map $${\Phi}:D^{n+1}\rightarrow\text{Emb}(M,\mathbb{R}^k_+{\times}\mathbb{R}^{N'})$$ defined by ${\Phi}(d):=\overline{\Phi}(d,-)$ is a nullhomotopy of $\phi$.  Thus the class $[\phi]\;{\in}\;\text{Emb}(M,\mathbb{R}^k_+{\times}\mathbb{R}^{N'})$ is trivial. $\Box$\\

It remains to prove proposition \ref{proptomain2}.\\

\begin{proof}$\textit{ (adapted from }\cite{Laures}\textit{)}$.  A halfway marker in the proof is

\begin{lemma}\label{extendtoneighborhood}  Let $\twokob$, let $A$ be a closed set in $M(1)$, and $U$ some open neighborhood of $A$ in $M(1)$ together with an embedding of $\brac$-spaces
$$e:U{\cup}M(a)\;\longrightarrow\;\mathbb{R}^k_+{\times}\mathbb{R}^{N}.$$
which restricted to $U$ is a $\brac$-embedding and restricted to $M(a)$ is a $\langle{|a|}\rangle$-embedding.  There is some neighborhood $\tilde{U}$ of $A{\cup}M(a)$ in $M(1)$ and a $\brac$-embedding
$$\tilde{e}:\tilde{U}\;\longrightarrow\;\mathbb{R}^k_+{\times}\mathbb{R}^N$$
which agrees with $e$ on the intersection of $\tilde{U}$ and $U{\cup}M(a)$.
\end{lemma}

 Assuming this lemma for the moment we prove proposition \ref{proptomain2} as follows. Embed $M(0)$ in $\mathbb{R}^N$ extending the embedding $e$ on a neighborhood of $A$ in $M(0)$ using the relative Whitney Embedding theorem. Now apply lemma \ref{extendtoneighborhood} to find a $\brac$-embedding of a collar of $M(0)$ in $M(1)$ which agrees with $e$ on an open neighborhood of $A$ in $M(1)$. Now by induction suppose $\mathcal{B}\subset\underline{2}^k$ is a non-empty subset and suppose we have found a $\brac$-embedding, $e_0$ on an open neighborhood of $A\;{\bigcup}\;({\cup}_{b{\in}\mathcal{B}}M(b))$ in $M(1)$ that agrees with $e$ on an open neighborhood of $A$ in $M(1)$.  Call this neighborhood $N$, and now take $a\;{\in}\;\underline{2}^k-{\mathcal{B}}$ where without loss of generality we may assume that $\mathcal{B}$ contains all $b$ strictly less than $a$.  Pick a finite partition of unity for $U$, 
$$\{(u_j:U_j\;{\rightarrow}\;\mathbb{R}^{|a|}, {\Phi}_j)\}\;{\cup}\;(e_0:N{\cap}M(a)\;{\rightarrow}\;\mathbb{R}^k_+{\times}\mathbb{R}^N,{\Xi})$$ with the property that ${\cup}U_j$ is disjoint from $A\;{\bigcup}\;({\cup}_{b{\in}\mathcal{B}}M(b))$.  If we define $$e_1:M(a)\;{\rightarrow}\;\mathbb{R}^k_+{\times}\mathbb{R}^{N'}$$ by the formula $$({\Xi}e_0,{\Phi}_1u_1,...,{\Phi}_lu_l),$$ then $e_1$ is an embedding which agrees with $e$ on a (possibly smaller) neighborhood of $A\;{\bigcup}\;({\cup}_{b{\in}\mathcal{B}}M(b))$ in $M(a)$.  It also satisfies the hypothesis of lemma \ref{extendtoneighborhood}, setting this (possibly smaller) neighborhood equal to $U$ in the notation of the lemma.   But now by lemma \ref{extendtoneighborhood}
 we may find a $\brac$-embedding of an open neighborhood of $$A\;{\bigcup}\;(\underset{b{\in}\mathcal{B}{\cup}\{a\}}{\bigcup}M(b))$$ in $M(1)$ that restricts to $e$ on an open neighborhood of $A$ in $M(1)$.  Thus we may inductively extend $e$ to a $\brac$-embedding of $M$.

Now we prove lemma \ref{extendtoneighborhood}.\\

Put a Riemannian metric on $M(1)$.   Consider the section of the bundle $TM(1)$ restricted to $M(e_i')$ which we obtain by taking the inward pointing unit vector normal to $TM(e_i')$.  Use the geodesic flow generated by this vector field to find a $\brac$-embedding $M(e_i'){\times}[0,{\epsilon}){\rightarrow}M(1)$.  Differentiating the flow produces a vector field in a neighborhood of $M(e_i')$ in $M(1)$.  Let $V_i$ denote the pushfoward under $e$ of this vector field restricted to some neighborhood of $e(U(e_i'))$ in $e(U)$.  Let $E_i$ denote the standard constant unit vector field orthogonal to $\mathbb{R}^k_+(e_i'){\times}\mathbb{R}^N$. Using a partition of unity $$u_1{\cup}u_2=[0,\epsilon){\times}\mathbb{R}^k_+(e_i'){\times}\mathbb{R}^N$$ with $u_2$ disjoint from some neighborhood of $e(U(e_i'))$, we may extend $V_i$ to a neighbhorhood of $\mathbb{R}^k_+(e_i'){\times}\mathbb{R}^N$ in $\mathbb{R}^k_+{\times}\mathbb{R}^N$. This extension is transverse to $\mathbb{R}^k_+(e_i'){\times}\mathbb{R}^N$.  By construction, away from a neighborhood of $e(U(e_i'))$ the vector field agrees with $E_i$, and on a smaller neighborhood of $e(A(e_i'))$, it agrees with $V_i$.  This extended vector field $V_i$ generates a flow which for small $\epsilon$ produces an embedding $$F_i:[0,\epsilon){\times}\mathbb{R}^k_+(e_i'){\times}\mathbb{R}^N\;\longrightarrow\;\mathbb{R}^k_+{\times}\mathbb{R}^N.$$\\

  Now we are ready to define the embedding of a neighborhood of $M(a){\cup}A$ in $M(1)$.  Using the same Riemannian metric as before, we may identify a neighborhood of $M(a)$ in $M(1)$ with $M(a){\times}[0,\epsilon)^k(a')$, and more generally, a neighborhood of $M(a){\cup}A$ in $M(1)$ by $M(a){\times}[0,\epsilon)^k(a')\;{\cup}\;U$.  By picking $\epsilon$ smaller and picking a smaller $U$ (that still contains $A$) we may additionally assume that $$U\;{\cap}\;M(a){\times}[0,\epsilon)^k(a')\;=\;U(a){\times}[0,\epsilon)^k(a').$$
Define
$$\tilde{e}:M(a){\times}[0,\epsilon)^k(a')\;{\cup}\;U\;\longrightarrow\;\mathbb{R}^k_+{\times}\mathbb{R}^N$$
$$x=(m,t_1,...,t_{|a'|})\mapsto F_{|a'|}(t_{|a'|},{\cdots}F_1(t_1,e(m))\text{ if }x\;{\in}\;M(a){\times}[0,\epsilon)^k(a')$$
$$x{\mapsto}e(m)\text{ if }x\;{\in}\;U$$
By construction this map is well defined and is a $\brac$-embedding which extends the original embedding $e$ in some neighborhood of $A{\cup}M(a)$.  This concludes the proof of lemma \ref{extendtoneighborhood}\end{proof}
and hence also theorem \ref{mainthm2}.\end{proof}

\section{$\brac$-vector bundles and $\brac$-spectra}

$\indent$In this section we explore vector bundles: (stable) tangent bundles, normal bundles, and structures over these bundles for $\brac$-manifolds.  After describing this, we construct Thom spectra in the $\brac$-\textit{Top} category analogous to ordinary Thom spectra.  To keep indexing under control we will always use $N$ to stand for $d+n+k$.\\ 

\begin{defn}A $\brac$-vector bundle is a functor from $\underline{2}^k$ to the category of vector bundles.\end{defn}

The $\brac$-vector bundles that appear in our paper belong to a special class of $\brac$-vector bundles.

\begin{defn} A $\brac$-vector bundle $E$ is geometric if for every $\twokmor$
the vector bundle $E(b)$ restricted to the base of $E(a)$ splits canonically as  ${\epsilon}^{c}{\oplus}E(a)$ for some $c\;{\geq}\;0$.\end{defn}

The normal and tangent bundles of $\brac$-manifolds are geometric $\brac$-vector bundles.  As is the case with ordinary vector bundles, there is a universal example of geometric $\brac$-vector bundles which we now describe.

For the next definition, consider $\mathbb{R}$ as a $\langle{1}\rangle$-space by letting
$\mathbb{R}(0)=\{0\}$ and the map $0<1$ be the inclusion of $\{0\}$ into the real line.
\begin{defn}The $\brac$-Grassmanian, which we denote by $G(d,n)\brac$, is the $\brac$-space defined as follows.  For $\twokob$ let $G(d,n)(a)$ be the space of $d+|a|$ dimensional vector subspaces of $\mathbb{R}^k(a){\times}\mathbb{R}^{d+n}$.  For $\twokmor$ the map
$$G(d,n)(a)\;\;\;{\longrightarrow}\;\;\;G(d,n)(b)$$
is defined by 
$$S\,\,\,\mapsto\,\,\,S+\mathbb{R}^k(b\text{-}a).$$\end{defn}

The inclusion $\mathbb{R}^k{\times}\mathbb{R}^{d+n}\rightarrow\mathbb{R}^k{\times}\mathbb{R}^{d+n+1}$
induces a $\brac$ map\\
$${\sigma}_N:G(d,n)\brac\;\;{\longrightarrow}\;\;G(d,n\text{+}1)\brac.$$
A neatly embedded d+k-dimensional $\brac$-submanifold of $\mathbb{R}^k_+{\times}\mathbb{R}^{d+n}$ admits a map to G(d,n)$\brac$ by the rule \\
$$p\;\;{\mapsto}\;\;T_pM\;{\subset}\;\mathbb{R}^k(a){\times}\mathbb{R}^{d+n}.$$
We shall denote this $\brac$-map by\\
$${\tau}M_N:M\;\;{\longrightarrow}\;\;G(d,n)\brac$$
and in the limit
$${\tau}M:M\;\;{\longrightarrow}\;\;BO(d)\brac.$$

\begin{defn} The canonical geometric $\brac$-bundle which we denote by ${\geocanon}$ is a geometric $\brac$-vector bundle over $G(d,n)\brac$ defined as follows.  For $\twokob$, the total space of ${\gamma}(d,n)(a)$ is
$$\{\;(S,v)\;{\in}\;G(d,n)(a){\times}\mathbb{R}^k(a){\times}\mathbb{R}^{d+n}\;\;|\;\;v{\in}S\;\}.$$
${\gamma}(d,n)(a)$ is the bundle map sending $(S,v)$ to $S$.  For $\twokmor$ the map ${\gamma}(d,n)(a<b)$ sends
$$(S,v)\;\;\text{ to }\;\;(\;S+\mathbb{R}^k(b-a)\;,\;\mathbb{R}^k(a<b)(v)\;).$$
\end{defn}

The reader is encouraged to notice that for every $\twokob$, the bundle ${\gamma}(d,n)(a)$ is canonically isomorphic to the usual canonical bundle $\gamma(d\text{+}|a|,n)$.

\begin{defn} The canonical perpendicular geometric $\brac$-bundle which we denote by $\geoperpcanon$ is a geometric $\brac$-vector bundle over $G(d,n)\brac$ defined as follows.  For $\twokob$, the total space of ${\gamma}(d,n)(a)$ is
$$\{\;(S,v)\;{\in}\;G(d,n)(a){\times}\mathbb{R}^k(a){\times}\mathbb{R}^{d+n}\;\;|\;\;v{\in}S^{\perp}\;\}.$$
${\gamma}^{\perp}(d,n)(a)$ is the bundle map sending $(S,v)$ to $S$.  For $\twokmor$ the map ${\gamma}^{\perp}(d,n)(a<b)$ sends
$$(S,v)\;\;\text{ to }\;\;(\;S+\mathbb{R}^k(b-a)\;,\;\mathbb{R}^k(a<b)(v)\;).$$
\end{defn}

The reader is again encouraged to notice that for every $\twokob$, the bundle ${\gamma}^{\perp}(d,n)(a)$ is canonically isomorphic to the usual canonical perpendicular bundle ${\gamma}^{\perp}(d\text{+}|a|,n)$.\\

At this juncture, we remark that the pullback of a vector bundle is readily defined for $\brac$-bundles. 
 While the Thom space of a vector bundle doesn't generalize to all $\brac$-vector bundles,  it does make sense for a large class of them.

\begin{defn} Let $\nu:E\brac\;{\longrightarrow}\;B\brac$ be a $\brac$-vector bundle such that every vector bundle map $E(a<b)$ restricts to an injective map on every fiber.   Let $D(E)\brac$ and $S(E)\brac$ be the disk and sphere $\brac$-bundles associated to $\nu$.  The Thom space of $\nu$ is the $\brac$-space denoted $B^{\nu}$ where for each $\twokob$, $B^{\nu}(a)$ is the space
 $$\frac{D(E)(a)}{S(E)(a)}$$
and the map 
$$B^{\nu}(a)\;\;\longrightarrow\;\;B^{\nu}(b)$$
is induced by the map $E(a<b)$.
\end{defn}
Notice that the class of $\brac$-vector bundles which admit Thom spaces includes geometric $\langle{k}\rangle$-vector bundles.\\

The next couple of remarks are more particular to our study of cobordisms accomodating stable tangential structure.  In the early days of cobordism theory ($\cite{Sto}$) it was realized that Pontrjagin-Thom construction could be generalized to yield a bijection between the set of cobordism classes of manifolds together with some stable structure and a homotopy group of the Thom space of the canonical perpendicular bundle over $BO$ pulled back along a certain fibration.   This fibration over $BO$ corresponds to the extra stable tangential data.  We briefly lay the groundwork for what the analogue of these fibrations will be in the $\brac$-space setting.

\begin{defn} We say $\fullstruc$ is a structure if for each $n{\geq}0$ we are given a $\langle{k}\rangle$-space $\struc$ together with maps of $\langle{k}\rangle$-spaces
$$\strucmapN:\struc\;\;{\longrightarrow}\;\;G(d,n)\brac\text{ and}$$
$${\sigma}'_{N}\brac:\struc\;\;{\longrightarrow}\;\;B_{d+n+1}\brac$$
such that $\strucmapN(a)$ is a fibration for each $\twokob$ and $\strucmapNone{\circ}{\sigma}'{_N}=\sigma{_N}{\circ}\strucmapN$.\end{defn}

\begin{defn} Suppose $M$ is a neat $\brac$-submanifold of $\mathbb{R}^k_+{\times}\mathbb{R}^{\infty}$.  A $\fullstruc$-structure on $M$ is a sequence of $\brac$-maps $f_N:M{\longrightarrow}\struc$ for all $N$ greater than some natural number $c$ such that 
${\strucmapN}f_N={\tau}M_N$ and ${\sigma}_Nf_N=f_{N+1}$.\end{defn}

We close out this section by discussing $\brac$-prespectra.  The Thom spaces of the vector bundles sketched earlier will furnish us with important examples.  For the rest of this section we will work solely with pointed spaces and pointed maps.\\

\begin{defn} A $\brac$-(pre)spectrum is a functor from $\underline{2}^k$ to the category of (pre)spectra.  Likewise, a $\brac-\Omega$-spectrum is a functor from $\underline{2}^k$ to the category of $\Omega$-spectra.\end{defn}

\begin{example} Start with a $\brac$-space $X$.  We obtain a $\brac$-prespectrum ${\Sigma}^{\infty}X$ by setting $({\Sigma}^{\infty}X)(a)$ to the prespectrum ${\Sigma}^{\infty}(X(a))$.\end{example}

\begin{example} Let $X_N$ be the $\brac$-space $(\mathbb{R}^k_+{\times}\mathbb{R}^{N-k})^c$ and $\sigma{_N}$ the obvious $\brac$-isomorphism.  Alternatively, we could have defined this spectrum kth desuspension of ${\Sigma}^{\infty}(\mathbb{R}^k_+)^c$.  It is the analogue of the sphere spectrum in the $\brac$-spectrum setting.\end{example}

\begin{defn} Let the Thom spectrum $\plainthom\brac$ be the $\brac$-prespectrum defined as follows.  For $\twokob$, let $\plainthom(a)$ be the Thom spectrum whose $N^{th}$ space is the Thom space of ${\gamma}^{\perp}(d,n)(a)$.  The reader may notice this spectrum is isomorphic to the Thom spectrum ${\Sigma}^{|a|-k}MT(d+|a|)$.   For $\twokmor$, let $\plainthom(a<b)$ be the degree zero map of prespectra which on the $N^{th}$ space is simply the map of Thom spaces induced by the bundle map.
$${\gamma}^{\perp}(d,n)(a)\;\;\longrightarrow\;\;{\gamma}^{\perp}(d,n)(b).$$
\end{defn}

As an elaboration on the example above, suppose $\fullstruc$ is a structure.
\begin{defn} We define the $\brac$-prespectrum $\thom\brac$ as follows.  For $\twokob$, let $\thom(a)$ be the Thom spectrum whose $N^{th}$ space is the Thom space of ${\theta}^*{\gamma}^{\perp}(d,n)(a)$.  For $\twokmor$, let $\plainthom(a<b)$ be the degree zero map of prespectra which on the $N^{th}$ space is simply the map of Thom spaces induced by the bundle map.
$${\theta}^*{\gamma}^{\perp}(d,n)(a)\;\;\longrightarrow\;\;{\theta}^*{\gamma}^{\perp}(d,n)(b).$$
\end{defn}

The indices can get confusing.  For this reason, the reader may find it easier to view $\brac$-prespectra as a sequence of $\brac$-spaces together with maps of the suspension of each $\brac$-space to the following $\brac$-space.  In the first of the two preceding definitions, 
$$\text{the }N^{th}\text{ $\brac$-space of }\plainthom\text{ is }\plainthomN\brac$$
and in the second definition,
$$\text{the }N^{th}\text{ $\brac$-space of }\thom\text{ is }\thomN\brac$$
It is in order to note that the preceding definition is an example of a $\brac$-diagram of Thom spectra associated to virtual bundles.

Next we describe a functor from $\brac$-prespectra to ordinary (not $\brac$!) $\Omega$-spectra.

\begin{defn}For any natural number $n$ and pointed $\brac$-space $X$, let ${\Omega}^n_{\brac}X$ be the space
 of based $\brac$-maps from $(\mathbb{R}^k_+{\times}\mathbb{R}^{n-k})^c$ to $X$.
\end{defn}

Given a $\brac$-prespectrum $\underline{X}=\{X_N,{\sigma}_N\}$ we may form the $\Omega$-spectrum $\underline{{\Omega}^{\infty}_{\brac}X}$ whose $L$th space is given by $$\underset{N\rightarrow\infty}{\text{colim}}\;{\Omega}^{N-L}_{\brac}X_N.$$

Soon we will see that the zero spaces of $\Omega$-spectra arising in this fashion are geometrically significant.  Because of their centrality in the rest of the paper, we will abrreviate the zero space functor from $\brac$-prespectra to infinite loopspaces by just ${\Omega}^{\infty}_{\brac}$.
An important proposition of Laures describes the homotopy type of ${\Omega}^{\infty}_{\brac}\underline{X}$ as being the zero space of an ordinary prespectrum.  The spaces in the prespectrum are given by taking a homotopy colimit of an appropriate functor over the category $\underline{2}^k_*$ (notice the extra asterisk) which we now describe.

Following $\cite{Laures}$, let $\underline{2}^k_*$ be the category consisting of $\underline{2}^k$ plus an extra object, $*$, and one morphism from each object to $*$ \textit{with the exception} of $1\;{\in}\;{2}^k$.  If $X$ is a functor from $\underline{2}^k$ to the category of pointed spaces, then $X_*$ is the functor from $\underline{2}^k_*$ to pointed spaces defined by $X_*(*)=\{\text{ basepoint}\}$, and otherwise $X_*(a):=X(a)$ for all $\twokob$.\\

Now if $\underline{X}=\{X_N,{\sigma}_N\}$ is a $\brac$-prespectrum, the spaces $\underset{\underline{2}^k_*}{\text{hocolim}{X_N}_*}$ form a prespectrum as there are maps
$$\Sigma\;\underset{\underline{2}^k_*}{\text{hocolim}}{{X_N}_*}\;{\approxeq}\;\underset{\underline{2}^k_*}{\text{hocolim}}{{\Sigma}X_N}_*\;\longrightarrow\;\underset{\underline{2}^k_*}{\text{hocolim}}{{X_{N+1}}_*}$$
We denote this prespectrum by $\underset{\underline{2}^k_*}{\text{hocolim}}\underline{X}_*$.\\

\begin{prop}\label{PuppeLaures}($\cite{Laures}$, lemmas 3.1.2 and 3.1.4)  Let $\underline{X}=\{X_N,{\sigma}_N\}$ be a $\brac$-prespectrum.  Then
$$\Omega^{\infty}_{\brac}\underline{X}\;\;{\simeq}\;\;
{\Omega}^{\infty}\underset{\underline{2}^k_*}{\text{hocolim}}\underline{X}_* $$\end{prop}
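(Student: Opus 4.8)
The plan is to reduce the statement to an analysis of the based $\brac$-mapping space $\Omega^n_{\brac}X$ and to recognize it, naturally in the $\brac$-space $X$, as the value at $X$ of a functor built from an ordinary homotopy colimit over $\underline{2}^k_*$. The key observation is that a based $\brac$-map out of the compactification $(\mathbb{R}^k_+\times\mathbb{R}^{n-k})^c$ is the same as a compatible family of based maps out of the faces of this source $\brac$-space, and the faces of $(\mathbb{R}^k_+\times\mathbb{R}^{n-k})^c$ are themselves spheres. Concretely, for $\twokob$ the source space evaluated at $a$ is $(\mathbb{R}^k(a)\times\mathbb{R}^{n-k})^c \cong S^{|a|+n-k}$, and the structure maps are inclusions of coordinate subspheres. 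Thus a $\brac$-map into $X$ is precisely a lift of a cone-like diagram: it assigns to each $a$ a point of $\Omega^{|a|+n-k}X(a)$, subject to compatibility under the bundle-type structure maps. Dualizing (the source is a $\brac$-\emph{space} of spheres, so taking based maps out of it turns a colimit of sources into a limit of mapping spaces), one sees that $\Omega^n_{\brac}X$ is naturally a homotopy limit — but by the standard Eilenberg–Moore/Poincaré duality-type manipulation on $\underline{2}^k$, a homotopy limit of mapping spaces $\mathrm{Map}_*(S^{?}, X(a))$ over the cube is the mapping space out of the homotopy colimit of the $S^{?}$'s, which is where the category $\underline{2}^k_*$ with its extra terminal-ish object $*$ enters: the ``missing morphism out of $1$'' in $\underline{2}^k_*$ is exactly the bookkeeping that records that $1\in\underline{2}^k$ contributes $X(1)$ itself rather than a based loop space of it.

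The steps, in order, would be: (1) Identify the source $\brac$-space $(\mathbb{R}^k_+\times\mathbb{R}^{n-k})^c$ face by face with a cubical diagram of spheres, and describe the homotopy colimit $\underset{\underline{2}^k_*}{\mathrm{hocolim}}$ of the one-point summands as a based space whose reduced homology/homotopy carries the expected shifts. (2) Prove the adjunction-type identity $\Omega^n_{\brac}X \simeq \mathrm{Map}_*\!\big(\underset{\underline{2}^k_*}{\mathrm{hocolim}}\,(\text{source})_*,\,X\big)$ — or more precisely, match $\Omega^n_{\brac}X$ with $\underset{\underline{2}^k_*}{\mathrm{hocolim}}\,(\Omega^{n-k}X)_*$-type expression by directly comparing the two cubical diagrams; this is really the content of Laures's lemmas 3.1.2 and 3.1.4 that the proposition cites, so I would invoke those after setting up the dictionary. (3) Pass to the stabilized statement: apply the identification levelwise to a $\brac$-prespectrum $\underline{X}=\{X_N,\sigma_N\}$, check that the structure maps $\sigma_N$ are carried to the structure maps of $\underset{\underline{2}^k_*}{\mathrm{hocolim}}\,\underline{X}_*$ displayed just before the proposition (using that suspension commutes with hocolim, as already noted in the excerpt), and then take the colimit over $N$ and the zero space. (4) Conclude that $\Omega^\infty_{\brac}\underline{X}$ and $\Omega^\infty \underset{\underline{2}^k_*}{\mathrm{hocolim}}\,\underline{X}_*$ agree, noting that the colimit defining $\Omega^\infty_{\brac}$ is already a homotopy colimit (the maps are cofibration-like), so no extra fibrancy argument is needed.

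The main obstacle I expect is step (2): making precise the sense in which ``based $\brac$-maps out of a cubical diagram of spheres'' is the same as ``based maps out of a single homotopy colimit,'' and getting the indexing category right so that the object $1\in\underline{2}^k$ (which has no morphism to $*$) produces the unshifted factor $X(1)$ while every other $a$ produces a genuinely looped factor. This is exactly where the asymmetry of $\underline{2}^k_*$ is doing work, and where one must be careful that the maps ``$\Sigma \,\mathrm{hocolim} \approxeq \mathrm{hocolim}\,\Sigma$'' are the correct natural equivalences rather than merely abstract ones, so that the prespectrum structure is honestly reproduced. Since the proposition is attributed to Laures's paper, the honest approach is to recall his argument in enough detail to see it applies verbatim in our indexing; I would present step (2) as a review of that argument and spend the remaining effort on steps (1), (3), and (4), which are the parts that need to be phrased in the notation of this paper rather than Laures's.
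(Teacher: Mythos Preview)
The paper does not supply a proof of this proposition at all: it is stated with a citation to Laures's lemmas 3.1.2 and 3.1.4 and then used as a black box. So there is no argument in the paper to compare your sketch against, and the honest route the paper takes is simply to quote Laures.

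That said, your sketch has a genuine gap that is worth naming. You correctly observe that for each $\twokob$ the $a$-face of the source $(\mathbb{R}^k_+\times\mathbb{R}^{n-k})^c$ is a sphere and that a based $\brac$-map into $X$ is the limit of the resulting diagram of mapping spaces; equivalently, $\Omega^n_{\brac}X$ is a \emph{homotopy limit} built from the $\Omega^{|a|+n-k}X(a)$'s (an iterated homotopy fibre over the cube). But the target of the proposition is $\Omega^\infty$ of a \emph{homotopy colimit} of the $X(a)$'s. Your ``Eilenberg--Moore/Poincar\'e duality-type manipulation'' and the adjunction ``maps out of a hocolim'' do not bridge that: mapping out of a hocolim of source spheres still lands you in a holim of targets, not a hocolim. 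The actual mechanism, and the reason the proposition is labelled after a Puppe sequence, is the \emph{stable} identification of homotopy fibres with (looped) homotopy cofibres: for $k=1$ one has $\Omega^n_{\langle 1\rangle}X\simeq\Omega^{n-1}\mathrm{hofib}\big(X(0)\to X(1)\big)$, and only after stabilising does this agree with $\Omega^{n}\mathrm{hocofib}\big(X(0)\to X(1)\big)=\Omega^{n}\,\underset{\underline{2}^1_*}{\mathrm{hocolim}}\,X_*$. The general $k$ is an inductive iteration of this step. Your steps (1), (3), (4) are fine bookkeeping, but step (2) as you describe it would fail unstably; you need to insert the fibre/cofibre comparison and explain why passing to $\Omega^\infty$ is essential.
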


At this point, we can state the analogue of the usual Pontrjagin-Thom theorem for $\brac$-manifolds.

\begin{thm}($\cite{Laures}$) The set of cobordism classes of d-1+k dimensional $\brac$-manifolds is in
 natural bijection with $${\pi}_0\;{\Omega}^{\infty}\underset{\underline{2}^k_*}{\text{hocolim}}\;G^{-{\gamma}_{d-1}}\brac_*.$$
\end{thm}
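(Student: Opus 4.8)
The plan is to follow the classical Pontrjagin--Thom recipe, executed one cube-vertex at a time and then assembled compatibly over the diagram $\underline{2}^k$. Recall that for each $\twokob$ a closed $(d-1+|a|)$-dimensional manifold $M(a)$ sits inside a stratum $\mathbb{R}^k(a)\times\mathbb{R}^{d-1+n}$ of a neat embedding of the ambient $\brac$-manifold. Collapsing a tubular neighbourhood of $M(a)$ in the one-point compactification $(\mathbb{R}^k(a)\times\mathbb{R}^{d-1+n})^c$ produces, by the usual Pontrjagin--Thom collapse, a pointed map to the Thom space of the normal bundle, which is classified by ${\tau}M_N$ into $G(d\text{-}1,n)(a)$; composing with the classifying map lands us in $\plainthomN\brac(a)$, the $N^{th}$ space of $\plainthom\brac(a)$. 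The neat compatibility condition (iii) in the definition of neat embedding guarantees that near a lower stratum $M(a)$ the collapse for $M(b)$ restricts, up to the canonical splitting ${\epsilon}^{|b|-|a|}\oplus\gamma^\perp(d\text{-}1,n)(a)$ built into the fact that $\plainthom\brac$ is a \emph{geometric} $\brac$-bundle, to the collapse for $M(a)$; hence the collection of collapse maps is a map of $\brac$-spaces into $\plainthomN\brac$, i.e. an element of ${\Omega}^{N}_{\brac}\plainthomN\brac$ after adjointing, and stabilising in $N$ and $n$ gives an element of ${\Omega}^{\infty}_{\brac}\plainthom\brac$. Applying Proposition \ref{PuppeLaures} identifies $\pi_0$ of this with $\pi_0\,{\Omega}^{\infty}\underset{\underline{2}^k_*}{\text{hocolim}}\;\plainthom\brac_*$, giving the forward map from cobordism classes.

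For the inverse, I would go in the standard direction: given a $\brac$-map $(\mathbb{R}^k_+\times\mathbb{R}^{N-k})^c \to \plainthomN\brac$ representing a class, first homotope it to a map transverse to the zero-section $G(d\text{-}1,n)\brac \subset \plainthomN\brac$ at every vertex of the cube, in a way that is compatible with the face maps --- this is where one needs the $\brac$-version of the Thom transversality theorem, which I would invoke from the appendix (\textbf{section 7}), along with the $\brac$-tubular neighbourhood theorem to recognise the preimage as a neatly embedded $\brac$-submanifold $M$ of $\mathbb{R}^k_+\times\mathbb{R}^{N-k}$ of the correct dimension $d-1+k$ with the correct codimension structure on its strata (so $M(a)$ is a $\langle|a|\rangle$-manifold of dimension $d-1+|a|$). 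That the resulting $M$ is well-defined up to $\brac$-cobordism follows by running the same transversality-and-tubular-neighbourhood argument one dimension up, applied to a homotopy or to a concordance of representatives; the perpendicularity clause in the definition of a neat embedding is exactly what makes the cobordism between strata of a $\brac$-cobordism again have neat corner structure, matching Laures's notion of $\brac$-cobordism. One also checks the two constructions are mutually inverse by the usual ``the collapse of the tube around a transverse preimage is homotopic to the original map'' argument, carried out functorially over $\underline{2}^k$.

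Finally I would address naturality: every arrow of the relevant cobordism-class functor and every arrow of the target is realised by the constructions above, and the bijection does not depend on the choices of neat embedding, tubular neighbourhood, or transverse representative --- independence of the embedding follows from Theorem \ref{mainthm2} (the embedding space is weakly contractible for $N$ large), and independence of the other choices from standard uniqueness-up-to-isotopy statements transported to the $\brac$ setting via the appendix. The dimension bookkeeping $N = d+n+k$ and the identification of $\plainthomN\brac(a)$ with the $N^{th}$ space of ${\Sigma}^{|a|-k}MT(d\text{-}1+|a|)$ should be tracked carefully but are routine.

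I expect the main obstacle to be the equivariant/diagrammatic transversality step: producing a single homotopy of the given $\brac$-map to one that is simultaneously transverse to all the zero-sections $G(d\text{-}1,n)(a)$ and \emph{strictly compatible} with the face inclusions $\plainthom\brac(a<b)$, so that the preimage stratification genuinely assembles into a neat $\brac$-submanifold with perpendicular corners rather than just a manifold with corners (cf. the cardioid non-example). Concretely this means doing the transversality inductively up the poset $\underline{2}^k$, at each stage extending an already-transverse map on the union of lower strata --- using the relative form of $\brac$-Thom transversality from \textbf{section 7} --- while keeping the collar/perpendicularity data coherent; this is the $\brac$-analogue of the delicate point in $\cite{GMTW}$ and of Laures's original corner-compatibility arguments, and it is where essentially all the real work lies. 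The passage through Proposition \ref{PuppeLaures} to rewrite ${\Omega}^{\infty}_{\brac}$ as an honest zero-space of a hocolim prespectrum is then purely formal.
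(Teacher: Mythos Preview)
Your proposal is correct and follows essentially the same route as the paper: the paper treats this statement as the special case $\theta=\mathrm{id}$ of the subsequent theorem, and proves that one exactly by Pontrjagin--Thom collapse for each stratum assembled into a $\brac$-map (forward), $\brac$-Thom transversality to the zero-section $G(d\text{-}1,n)\brac$ (backward), and then invokes Proposition~\ref{PuppeLaures} to rewrite $\Omega^{\infty}_{\brac}$ as the zero space of the hocolim prespectrum. The one point worth noting is that the ``main obstacle'' you flag --- achieving simultaneous, face-compatible transversality --- is not handled by an induction up the poset $\underline{2}^k$ in the paper, but rather by a single invocation of Lemma~\ref{translemma}, which proves directly that $\brac$-transverse maps form an open dense subset of the full $\brac$-mapping space; the key technical ingredient there is that each restriction map $\sigma_a:C^{\infty}_{\brac}(M,N)\to C^{\infty}(M(a),N(a))$ is open, so you may wish to streamline your argument accordingly.
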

More generally if we are given a structure $\fullstruc$ there is also the following theorem. 
\begin{thm}($\cite{Laures}$) The set of cobordism classes of d-1+k dimensional $\brac$-manifolds together with lifts to $\fullstruc$ is in natural bijection with $${\pi}_0\;{\Omega}^{\infty}\underset{\underline{2}^k_*}{\text{hocolim}}\;B^{-{\theta}^*{\gamma}_{d-1}}\brac_*.$$
\end{thm}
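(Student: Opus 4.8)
The plan is to run the generalized Pontrjagin--Thom construction of \cite{Sto} internally to the category $\brac$-\emph{Top}, using Theorem \ref{mainthm2} to remove the dependence on auxiliary embeddings and Proposition \ref{PuppeLaures} to identify the target. First I would build the forward map. Given a closed $d-1+k$ dimensional $\brac$-manifold $M$ equipped with a $\fullstruc$-structure $\{f_N\}$, choose for $N$ large a neat embedding $M\hookrightarrow\mathbb{R}^k_+{\times}\mathbb{R}^{N-k}$; by Theorem \ref{mainthm2} the space of such embeddings is weakly contractible, so nothing depends on this choice up to homotopy. A neat tubular neighborhood theorem for $\brac$-manifolds --- the sort of statement collected in the appendix --- provides a $\brac$-vector bundle neighborhood realized by the normal bundle $\nu_M$, which is geometric and hence has a Thom $\brac$-space $M^{\nu_M}$ in the sense of Section 3. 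Collapsing the complement of this neighborhood levelwise gives a based $\brac$-map from the $\brac$-sphere $(\mathbb{R}^k_+{\times}\mathbb{R}^{N-k})^c$ onto $M^{\nu_M}$; the perpendicularity clause in the definition of a neat embedding is precisely what makes the stratumwise collapses assemble into a $\brac$-map. The Gauss map $\tau M_N$ identifies $\nu_M$ with the pullback of ${\gamma}^{\perp}(d-1,n)\brac$, so composing with the lift $f_N$ and applying the Thom-space functor yields $M^{\nu_M}\to B^{{\theta}^*{\gamma}^{\perp}(d-1,n)}\brac$, the $N$-th $\brac$-space of $B^{-{\theta}^*{\gamma}_{d-1}}\brac$. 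The composite represents a point of ${\Omega}^{\infty}_{\brac}B^{-{\theta}^*{\gamma}_{d-1}}\brac$, hence by Proposition \ref{PuppeLaures} a class in ${\pi}_0\,{\Omega}^{\infty}\underset{\underline{2}^k_*}{\text{hocolim}}\,B^{-{\theta}^*{\gamma}_{d-1}}\brac_*$.

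Second, I would check this assignment descends to $\fullstruc$-cobordism classes. Independence of $N$ follows from the compatibilities ${\sigma}_Nf_N=f_{N+1}$ and the stabilization maps of the Grassmannians; independence of the embedding and of the tubular neighborhood is Theorem \ref{mainthm2} together with uniqueness of neat tubular neighborhoods up to isotopy. If $W$ is a $\langle k+1\rangle$-cobordism from $M_0$ to $M_1$ carrying a compatible $\fullstruc$-structure, I would neatly embed $W$ --- possible by Theorem \ref{mainthm2} applied with $k+1$ in place of $k$ --- run the same construction one face-dimension higher, and use the relation between the indexing categories $\underline{2}^{k+1}_*$ and $\underline{2}^k_*$, together with Proposition \ref{PuppeLaures} at both levels, to extract a based $\brac$-homotopy between the Pontrjagin--Thom maps of $M_0$ and $M_1$. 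This produces a well-defined map $\Phi$ from $\fullstruc$-cobordism classes to ${\pi}_0$ of the infinite loop space in question, additive under disjoint union.

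Third, I would prove $\Phi$ bijective by $\brac$-transversality. Given a based $\brac$-map $g$ from the $\brac$-sphere into $B^{{\theta}^*{\gamma}^{\perp}(d-1,n)}\brac$, deform it to be transverse to the zero section of that Thom $\brac$-space; a transversality statement in the $\brac$-setting has to be proved face by face, making $g$ transverse over the top face first and then extending transversality inward along the collars using the geometric structure of the $\brac$-bundles --- again a result the appendix should supply. The preimage under $g$ of the zero section is then a neat $d-1+k$ dimensional $\brac$-submanifold of $\mathbb{R}^k_+{\times}\mathbb{R}^{N-k}$ whose normal bundle is canonically the pullback of ${\gamma}^{\perp}(d-1,n)\brac$, so $g$ followed by $\theta$ restricts to a $\fullstruc$-structure on it, and $\Phi$ of its cobordism class is $[g]$ by construction. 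Running the same transversality argument on a based $\brac$-homotopy between two Pontrjagin--Thom maps, rel endpoints, produces a $\langle k+1\rangle$-cobordism with compatible structure between the two manifolds, which gives injectivity.

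The step I expect to be the real obstacle is not the homotopy-theoretic bookkeeping --- the passage from ${\Omega}^{\infty}_{\brac}$ of a $\brac$-spectrum to ${\Omega}^{\infty}$ of the homotopy colimit over $\underline{2}^k_*$ is exactly Proposition \ref{PuppeLaures} --- but the differential topology of neat $\brac$-embeddings. One needs tubular neighborhood and transversality theorems for $\brac$-manifolds with enough naturality that both the Pontrjagin--Thom map and its inverse, as well as the homotopies between them, are honest $\brac$-maps rather than merely objectwise maps; this forces one to control behavior along all faces and their collars simultaneously rather than one stratum at a time, and is essentially the content imported from \cite{Laures} and re-derived in the appendix.
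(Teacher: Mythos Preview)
Your proposal is correct and follows essentially the same route as the paper: reduce via Proposition~\ref{PuppeLaures} to $\pi_0\,\Omega^{\infty}_{\brac}B^{-\theta^*\gamma_{d-1}}\brac$, build the forward map by a stratumwise Thom collapse over a neat embedding, and invert it by $\brac$-transversality to the zero section, invoking the appendix for the needed differential topology. The one refinement the paper makes explicit is that, since $B$ need not be smooth, one perturbs the composite $\theta_{N-1}\circ H_{N-1}$ (landing in the Grassmannian Thom space) rather than $H_{N-1}$ itself, and then uses that $\theta_{N-1}$ is a fibration to lift the resulting homotopy and recover a genuine $\fullstruc$-structure $f$ on the preimage with $\theta_{N-1}\circ f=\tau M$; your phrase ``$g$ followed by $\theta$ restricts to a $\fullstruc$-structure'' hides exactly this step.
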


\begin{proof}
We prove the second more general statement.  By the proposition above it suffices to show that there is a natural correspondance between cobordism classes and the set
$${\pi}_0\;{\Omega}^{\infty}_{\brac}\;B^{-{\theta}^*{\gamma}_{d-1}}\brac.$$
Let $(M,f)$ be a neat $\brac$-submanifold of $\mathbb{R}^k_+{\times}\mathbb{R}^{\infty}$ which representing the cobordism class $[(M,f)]$.  As $M$ is compact it lies in $\mathbb{R}^k+{\times}\mathbb{R}^{d-1+n}$ for some $n{\geq}0$.  For each $\twokob$ we may take a Thom collapse map
$$C(a):\;(\mathbb{R}^k_+(a){\times}\mathbb{R}^{d-1+n})^c\;\;\longrightarrow\;\;(Tub(a))^c$$
onto the one-point compactification of a tubular neighborhood of $M(a)$ in $\mathbb{R}^k_+(a){\times}\mathbb{R}^{d-1+n}$.  By the tubular neighborhood theorem there is some homeomorphism $t(a)$, between ${Tub}(a)$ and the pullback of ${\gamma}^{\perp}(d-1,n)(a)$ to $M(a)$ by ${\tau}M(a)$.  Let $\pi$ be the projection of $Tub(a)$ onto $M$.  Then $(\;f(a){\circ}{\pi}\;,\;t(a)\;)$
defines a proper map
$$Tub(a)\;\;\longrightarrow\;\;{\theta}^*_{N-1}{\gamma}^{\perp}(d-1,n)(a).$$
Composing $C(a)$ with the compactification of the above map yields a map
$$h(a):\;(\mathbb{R}^k_+(a){\times}\mathbb{R}^{d-1+n})^c\;\;{\longrightarrow}\;\;B^{{\theta}^*_{N-1}{\gamma}^{\perp}(d-1,n)}(a).$$
The $h(a)$ maps cobble together to yield a map $H_{N-1}$ of $\brac$-spaces:
$$H_{N-1}\;{\in}\;{\Omega}^{N-1}_{\brac}B^{{\theta}^*_{N-1}{\gamma}^{\perp}(d-1,n)}\brac.$$
This map is well defined up to $\brac$-homotopy.\\

To construct an inverse map we start with a map
$$H\;{\in}\;{\Omega}^{\infty}_{\brac}B^{-{\theta}^*{\gamma}^{\perp}_{d-1}}\brac$$
 which induces for some $N$, a $\brac$-map $$H_{N-1}\;{\in}\;{\Omega}^{N-1}_{\brac}B^{{\theta}^*_{N-1}{\gamma}^{\perp}(d-1,n)}\brac.$$
  By Thom transversality for $\brac$-maps (see appendix \ref{appendix2}) we may perturb  ${\theta}_{N-1}{\circ}H_{N-1}$ by some $\brac$ homotopy $l$ to a map $\overline{H}_{N-1}$ that is transverse to the submanifold $G(d-1,n)\brac$ of ${\gamma}^{\perp}(d-1,n)\brac$.  The preimage $$M:=\;\overline{H}_{N-1}^{-1}(G(d-1,n)\brac)$$ 
will be a $d-1+k$ dimensional $\brac$-submanifold of $\mathbb{R}^k_+{\times}\mathbb{R}^{d-1+n}$; notice the map $\overline{H}_{N-1}$ restricted to $M$ is simply the classifying map of its tangent bundle, ${\tau}S$, which we will now use as notation.  Furthermore, we may also choose our homotopy $l$ so that ${\tau}M$ is arbitrarily $C_0$ close to ${\theta}_{N-1}{\circ}H_{N-1}$.  Because of this, we may assume that $M$ is a $\brac$-subspace of $H_{N-1}^{-1}{\theta}_{N-1}^{-1}\gamma(d-1,n)\brac$ or equivalently that the map $H_{N-1}$ restricted to $M$ maps to $B(d-1,n)\brac$.  As ${\theta}_{N-1}$ is a fibration, the data of $H_{N-1}$ together with the homotopy $l$ between ${\theta}_{N-1}{\circ}H_{N-1}$ and ${\tau}S$ furnish a map $$f:M\;\longrightarrow\;B(d-1,n)\brac$$ such that ${\theta}_{N-1}{\circ}f={\tau}M$.  This yields a pair $[(M,f)]$ which is the desired cobordism class.  The two constructions are inverse to one another.
\end{proof}

\section{$\brac$-Cobordism Categories and Statement of Main Theorem}

$\indent$The first goal of this section is to define cobordism categories for $\brac$-manifolds.  The definition will be broad enough to include tangential data.  Once we have done this, we state the main theorem of the paper.\\

Our cobordism categories are topological categories.  We will first describe the category as a category of sets, and later indicate how to topologize the category.  For the remainder of this section, let $\fullstruc$ be a structure over $G(d,n)\brac$.  Also recall that for a $\brac$-submanifold $M$ of $\mathbb{R}^k_+{\times}\mathbb{R}^{\infty}$, ${\tau}M:\;M\;{\longrightarrow}\;BO(d)\brac$ is the map classifying the tangent bundle of $M$.
\begin{defn}Let $\Cob$ denote the d+$\brac$ topological cobordism category with structure $\fullstruc$.\\ 
$\textbf{Objects of }\Cob\textbf{ :}$\\
As a set, the objects of $\Cob$ consist of triples $(N,a,g)$ where:\\
  i) $a\;{\in}\;\mathbb{R}$,\\
 ii) $N$ is a neat d-1+k dimensional $\brac$-submanifold of $\mathbb{R}^k_+{\times}\mathbb{R}^{\infty}{\times}\{a\}$,\\
 iii) $g:N\;{\longrightarrow}\;\underset{n\rightarrow\infty}{\text{colim}}\;\struc$ satisfies ${\strucmap}{\circ}g={\tau}N'_{|N}$ where $$N'=\{\;(\vec{x},\vec{y},z)\;{\in}\;\mathbb{R}^k_+{\times}\mathbb{R}^{\infty}{\times}\mathbb{R}\;\;|\;\;(\vec{x},\vec{y},a)\;{\in}\;N\;\}.$$\\
$\textbf{Morphisms of }\Cob\textbf{ :}$\\
The set of morphisms from $(N_1,a,g_1)$ to $(N_2,b,g_2)$ consist of quadruples $(W,a,b,G)$ where:\\
 i) $W$ is a neat $d+k$-dimensional $<$k+1$>$-submanifold of $
\mathbb{R}^k_+{\times}\mathbb{R}^{\infty}{\times}[a,b]$,\\
 ii) $G:W\;{\longrightarrow}\;\underset{n\rightarrow\infty}{\text{colim}}\;\mathcal{I}_{k+1}\struc$ satisfies $\strucmap{\circ}G={\tau}W$,\\
 iii) ${\partial}_{\text{k+1}}W\;=\;N_1\;{\coprod}\;N_2$ and $\partial_{\text{k+1}}G\;=\;g_1\;\coprod\;{g_2}$\\
 (for a $\brac$-map $f:X{\rightarrow}Y$, ${\partial}_if$ stands for $f{_{|{\partial}_iX}}$).\end{defn}

Here is an example when $\theta$ is trivial and $d=k=1$.  Notice that $W(1,0)$ is the disjoint union of figures $(b)$ and $(c)$.  Figure $(e)$ could also have been labelled $N_1(0){\coprod}N_2(1)$.

\begin{figure}[H]
\vspace{-50 pt}
\centering
\hspace{40 pt}
\subfloat[$W(1,1)$]{\includegraphics[bb = 0 0 581 609, width=0.60\textwidth]{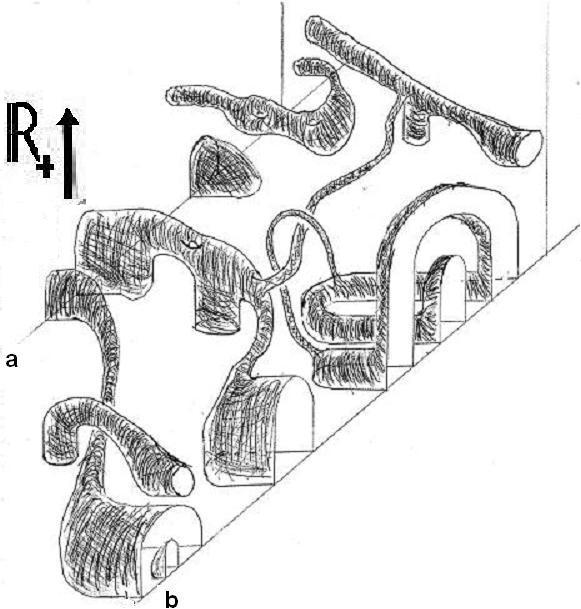}}
\\
\centering
\vspace{-25 pt}
\subfloat[$N_1(1)$]{\includegraphics[bb = 0 0 457 430, width=0.24\textwidth]{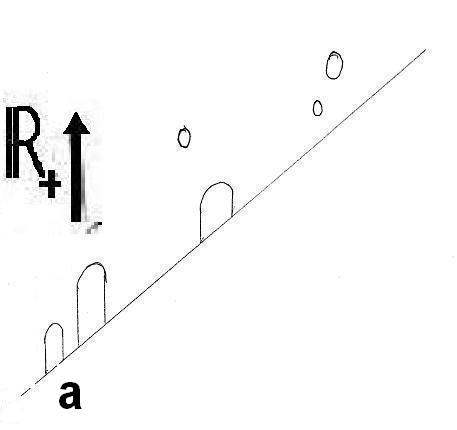}}
\subfloat[$N_2(1)$]{\includegraphics[bb = 0 0 544 527, width=0.24\textwidth]{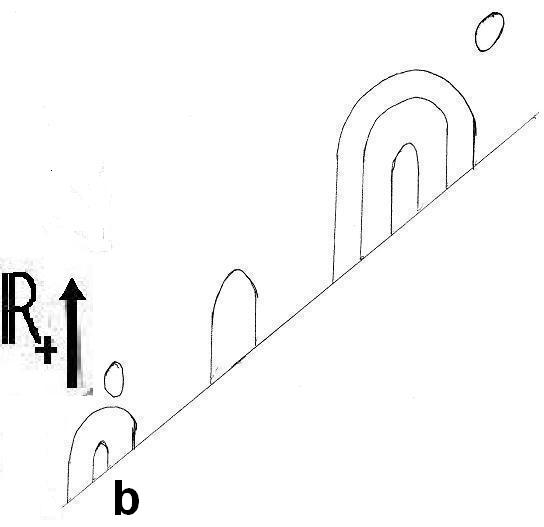}}
\subfloat[$W(0,1)$]{\includegraphics[bb = 0 0 613 593, width=0.24\textwidth]{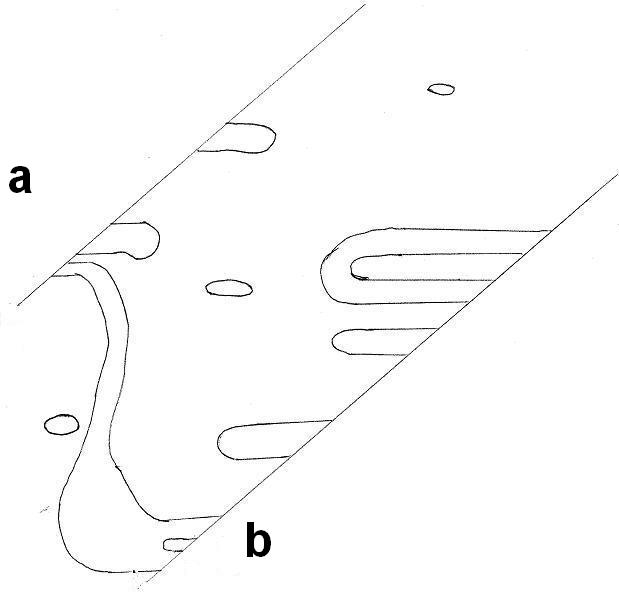}}
\subfloat[$W(0,0)$]{\includegraphics[bb = 0 0 588 631, width=0.24\textwidth]{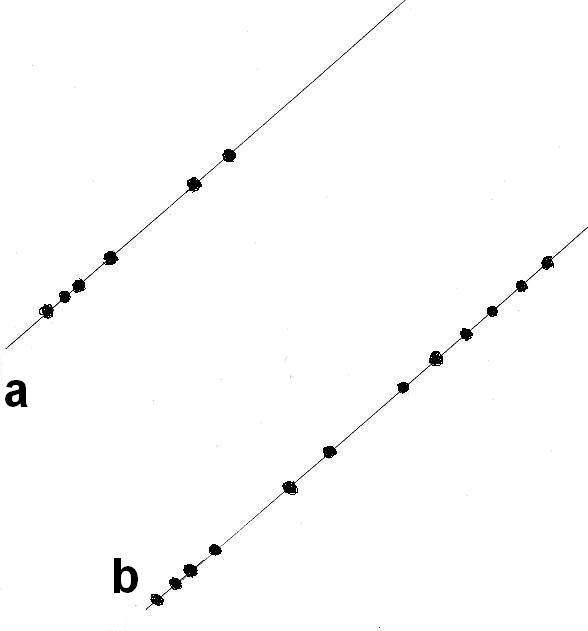}} 
\end{figure}

 Now we put a topology on the set of objects.  As a preliminary step, notice that the space of $\langle{k}\rangle$-maps, $C_{\brac}(X,Y)$, can be written as an appropriate limit over a diagram where the spaces are of the form $C(X(a),Y(b))$ for $a,b{\in}\underline{2}^k$ and $C(X(a),Y(b))$ are endowed with
 the compact open topology.\\

\begin{defn}For a d-1+k dimensional $\brac$-manifold $M$, let $Y(M)_n$ denote the space of neat $\brac$-embeddings of $M$ into $\mathbb{R}^k_+{\times}\mathbb{R}^{d+n}$ together with a structure map from $M$ to $\struc$.\end{defn}
 By definition then, $Y(M)_n$ is the limit of the following diagram:
 $$\text{Emb}_{\brac}(M,\mathbb{R}^k_+\times\mathbb{R}^{d+n}){\rightarrow}C_{\brac}(M,G(d,n,\brac)){\leftarrow}C_{\brac}(M,\struc))$$
 which is topologized as a subset of
$$\text{Emb}_{\brac}(M,\mathbb{R}^k_+\times\mathbb{R}^{d+n})\;\;{\times}\;\;C_{\brac}(M,\struc))$$
\begin{defn}\label{defncobobs} Let $Y(M):=\text{colim}_{n\rightarrow\infty}Y(M)_n$.\end{defn}
\begin{defn} Let $\text{Diff}_{\brac}(M)$ denote the space of diffeomorphisms of $M$ that map $M(a)$ to $M(a)$ for each $\twokob$.\end{defn}
 Diff$_{\brac}(M)$ clearly acts on $Y(M)$, and the space 
$\frac{Y(M)}{\text{Diff}_{\brac}(M)}$ comes equipped with the quotient topology.  Then
$$\text{ob}\;\Cob\;\;:=\;\;\underset{[M]{\in}\mathcal{S}}{\coprod}\;\frac{Y(M)}{\text{Diff}_{\brac}(M)}{\times}\mathbb{R}$$
where $\mathcal{S}$ is the set of diffeomorphism classes of d-1+k dimensional $\brac$-manifolds topologizes the objects of $Cob$\\

 We topologize the morphism space as we did the object space.  Composition is cobordism composition and gluing the maps $G$ along their shared domain.\\

Now we are ready to state the main theorem of the paper.  Again, recall that  $\fullstruc$ is a structure over $G(d,n)\brac$.
\begin{mainthm}\label{mainthm}
$$B\Cob\;\;{\simeq}\;\;\kzerospec$$
\end{mainthm}
The proof of this theorem will be the subject of the next section.    Combining the theorem with the lemma from the previous section we obtain an important corollary which we will use for our applications.
\begin{maincor}$$B\Cob\;\;{\simeq}\;\;{\kzerohocolim}$$\end{maincor}

\section{Proof Of Main Theorem}

$\indent$The strategy of proof is to adapt the argument found in ($\cite{GMTW}$) to the setting of $\brac$-manifolds.  We give an overview now; it should be noted that the original insights all come from $\cite{M-W}$.   First, there is a natural bijection between homotopy classes of maps, $[X,\kzerospec]$, and the $\textit{concordance}$ classes of an associated sheaf of sets on $X$ (our sheaves here are defined on manifolds without boundary).  We describe this associated sheaf and explain what we mean by concordance.  The proof of the bijection is postponed to the end of the section.

\begin{defn}Let $\fullstruc$ be tangential data, let $U$ be a manifold without boundary, and let $N=d+n+k$. We define  $\sheafDn(U)$ to be the set of pairs $(W,g)$ where $W$ is a neat $\brac$-submanifold of $U\hspace{3 pt}{\times}\hspace{3 pt}\mathbb{R}\hspace{3 pt}{\times}\hspace{3 pt}\mathbb{R}^k_+\hspace{3 pt}{\times}\hspace{3 pt}\mathbb{R}^{d-1+n}$ and $g:W\;{\longrightarrow}\;\struc$ is a $\brac$-map which satisfy the criteria listed below.  For what follows, let $\pi$ and $f$ be the projection maps onto $U$ and $\mathbb{R}$ respectively.  Then\\
\newline
i) $(\pi,f)$ is proper.\\
\newline
ii) $\pi$ is a submersion whose fibres are d+k-dimensional neat $\brac$-submanifolds of $\mathbb{R}\;{\times}\;\mathbb{R}^{k}_+\;{\times}\;\mathbb{R}^{d-1+n}.$\\
\newline
iii) $\strucmapN{\circ}g=T^{\pi}W_N$, where $T^{\pi}W_N$ is the map sending $x\;{\in}\;W$ to the tangent space at $x$ of the manifold $\pi^{-1}(\pi(x))$ thought of as a subspace of $\{x\}\times\mathbb{R}^{\brac}{\times}\mathbb{R}^{d+n}$ i.e. an element of $G(d,n)\brac$.\end{defn}

\begin{defn}We define $\sheafD(U):=\text{colim}_{n{\rightarrow}\infty}\sheafDn(U)$.\end{defn}

\begin{prop} Let $X$ be a manifold without boundary.  There is a natural bijection between $[X,\kzerospec]$ and $\sheafD[X]$\end{prop}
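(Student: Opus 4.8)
The plan is to follow the template of \cite{GMTW}. I would first construct a natural (in $X$) transformation
$$PT:\;\sheafD(X)\;\longrightarrow\;\text{Map}(X,\kzerospec)$$
by a parametrized Pontryagin--Thom collapse, then verify that it carries concordances to homotopies, so that it descends to $\overline{PT}:\sheafD[X]\to[X,\kzerospec]$, and finally exhibit an inverse on the level of classes using $\brac$-transversality. Because $\kzerospec=\text{colim}_n\,{\Omega}^{N-1}_{\brac}\thomN\brac$ and $\sheafD(U)=\text{colim}_n\,\sheafDn(U)$, and because $X$ has the homotopy type of a CW complex --- so that maps into a colimit, and homotopies between them, factor up to homotopy through a finite stage --- it suffices to work at a fixed finite $n$ and afterwards pass to the colimit; so fix $(W,g)\in\sheafDn(X)$.

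To define the forward map, I would, for each $x\in X$, consider the fibre $W_x=\pi^{-1}(x)$, a neat $\brac$-submanifold of $\mathbb{R}{\times}\mathbb{R}^k_+{\times}\mathbb{R}^{d-1+n}$ of fibre dimension $d+k$ (condition (ii) in the definition of $\sheafDn$). By the $\brac$-tubular neighbourhood theorem of appendix \ref{appendix2}, $W_x$ has a $\brac$-tubular neighbourhood whose total $\brac$-bundle is the pullback of $\geoperpcanon$ along $T^{\pi}W_N$ restricted to $W_x$, and the identity $\strucmapN\circ g=T^{\pi}W_N$ refines this, via $g|_{W_x}$, to a $\brac$-map into ${\theta}_N^*{\gamma}^{\perp}(d,n)\brac$. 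Collapsing everything outside this neighbourhood --- this is the parametrized version of the Thom-collapse construction already carried out in the proof of the $\brac$-Pontryagin--Thom theorem in \textbf{section 3}, the remaining $\mathbb{R}$-coordinate $f$ accounting for the degree shift ``${\infty}-1$'' --- produces a based $\brac$-map into $\thomN\brac$, i.e.\ a point of $\kzerospec$, and properness of $(\pi,f)$ (condition (i)) makes this point vary smoothly with $x$. Naturality in $X$ is immediate since a smooth map $X'\to X$ pulls back the fibre family and $PT$ commutes with the pullback.

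If $(W,g)\in\sheafD(X{\times}\mathbb{R})$ is a concordance between $(W_0,g_0)$ and $(W_1,g_1)$, then $PT(W,g):X{\times}\mathbb{R}\to\kzerospec$ is a homotopy between $PT(W_0,g_0)$ and $PT(W_1,g_1)$, so $\overline{PT}$ is well defined. To invert it on classes I would start with $\phi:X\to\kzerospec$ and, after a homotopy, assume it factors through a finite stage ${\Omega}^{N-1}_{\brac}\thomN\brac$, giving an adjoint $\hat{\phi}:X{\times}(\mathbb{R}{\times}\mathbb{R}^k_+{\times}\mathbb{R}^{d-1+n})^c\to\thomN\brac$; post-composing with the map of Thom $\brac$-spaces induced by $\strucmapN$ gives $\psi:X{\times}(\mathbb{R}{\times}\mathbb{R}^k_+{\times}\mathbb{R}^{d-1+n})^c\to\plainthomN\brac$. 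By the $\brac$-transversality theorem of appendix \ref{appendix2}, applied parametrically over $X$ and rel the point at infinity (and rel any neighbourhood where $\psi$ is already transverse), I would homotope $\psi$ to a $\brac$-map $\overline{\psi}$ transverse to the zero section $G(d,n)\brac\subset\plainthomN\brac$ whose restriction to each slice $\{x\}{\times}(-)^c$ is transverse as well. Then $W:=\overline{\psi}^{-1}(G(d,n)\brac)$ is a neat $\brac$-submanifold of $X{\times}\mathbb{R}{\times}\mathbb{R}^k_+{\times}\mathbb{R}^{d-1+n}$, the projection $\pi$ to $X$ is a submersion with $d+k$-dimensional $\brac$-fibres, and $\overline{\psi}|_W$ is the classifying map $T^{\pi}W_N$ of the vertical tangent bundle; since $\strucmapN$ is a fibration, the homotopy-lifting property converts $\hat{\phi}|_W$ together with the transversality homotopy into a $\brac$-map $g:W\to\struc$ with $\strucmapN\circ g=T^{\pi}W_N$, and $(\pi,f)$ is proper because the point at infinity maps to the basepoint. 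Thus $(W,g)\in\sheafDn(X)$ and $PT(W,g)\simeq\phi$, giving the map $[X,\kzerospec]\to\sheafD[X]$.

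Finally I would check that the two maps on classes are mutually inverse. One composite is the identity by the last sentence; for the other, given $(W,g)\in\sheafD(X)$ I would re-run the extraction on $PT(W,g)$, and since the collapse map associated to $W$ is \emph{already} transverse to the zero section, a straight-line homotopy in $\plainthomN\brac$ carried out parametrically over $X{\times}[0,1]$ yields a concordance from the re-extracted pair back to $(W,g)$. I expect the main obstacle to be the $\brac$-bookkeeping: one must be sure that the tubular neighbourhoods, collapse maps, and transversality perturbations can be chosen compatibly over the entire cube $\underline{2}^k$ --- i.e.\ respecting the ``perpendicular intersection'' (neatness) condition and restricting correctly to every face $W(a)$ --- which is exactly the content of the $\brac$-versions of the tubular-neighbourhood and transversality theorems in appendix \ref{appendix2}; granting those, the argument is a faithful transcription of \cite{GMTW}. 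A secondary point requiring care is that in the inverse construction the transversality must be arranged fibrewise over $X$, so that $\pi:W\to X$ is genuinely a submersion (condition (ii) of $\sheafDn$) rather than merely a smooth map.
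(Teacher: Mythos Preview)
Your overall strategy --- parametrized Pontryagin--Thom collapse in one direction, transversality in the other --- is the same as the paper's, but the two constructions differ in ways that go beyond $\brac$-bookkeeping.

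For the forward map the paper does not collapse onto the (possibly non-compact) fibres $W_x=\pi^{-1}(x)$. It instead chooses a $\brac$-regular value $c$ of $f:W\to\mathbb{R}$ (appendix~\ref{appendix1}), sets $M:=f^{-1}(c)$ --- compact when $X$ is closed, since $(\pi,f)$ is proper --- and performs a single collapse $X_+\wedge(\keucNless)^c\to M^{\mu}$ onto the normal bundle of $M$ in $X\times\{c\}\times\keucNless$; a short bundle calculation identifies $\mu$ with the restriction to $M$ of the normal bundle of $W$, and composing with $g$ lands in $\thomN\brac$. Your fibrewise version can likely be made rigorous, but the slice approach sidesteps the non-compactness of $W_x$ that you would otherwise have to control.

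The real divergence is in the inverse map, and what you call a ``secondary point requiring care'' is in fact the central difficulty. Arranging that $\overline{\psi}|_{\{x\}\times(-)^c}$ be transverse to the zero section for \emph{every} $x\in X$ simultaneously is not delivered by the transversality theorem of appendix~\ref{appendix2}: generically one gets transversality only for almost every $x$, and the residual critical set of $\pi:W\to X$ cannot in general be removed by a small perturbation. The paper avoids this entirely. Ordinary (non-fibrewise) $\brac$-transversality yields a $\brac$-submanifold $V\subset X\times\keucNless$; one sets $W:=V\times\mathbb{R}$, carries out a bundle computation to exhibit a stable isomorphism $TV\oplus\epsilon\cong\pi^{*}TX\oplus T^{\pi}X$, destabilizes it via the connectivity estimate of appendix~\ref{appendix5}, and then invokes the $\brac$-Phillips submersion theorem (appendix~\ref{appendix4}) to replace the resulting bundle surjection $TW\to TX$ by an honest $\brac$-submersion; a further isotopy re-embeds $W$ so that this submersion is literally the projection to $X$. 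The Phillips step, together with the destabilization lemma feeding it, is the idea missing from your proposal --- it is the mechanism by which condition~(ii) of $\sheafDn$ is actually secured, not a matter of $\brac$-compatibility.
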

\begin{proof}Postponed to the end of the section.\end{proof}

\begin{defn}Let $\mathcal{F}$ be a sheaf of sets and $X$ a manifold without boundary.  Two sheaf elements $s_0,s_1\;{\in}\;\mathcal{F}(X)$ are concordant if there exists a sheaf element $t\;{\in}\;\mathcal{F}(X{\times}\mathbb{R})$ such that $i_0^*t=s_0$ and $i_1^*t=s_1$ where $i_0$ and $i_1$ are the inclusion maps that identify $X$ with $X{\times}\{0\}$ and $X{\times}\{1\}$ respectively.  The set of concordance classes of elements of $\mathcal{F}(X)$ is denoted $\mathcal{F}[X]$.\end{defn}

So will we prove $[X,\kzerospec]\;{\approxeq}\;\sheafD[X]$.  Contrast this with the following proposition.

\begin{prop}\label{sheafclass}Given a sheaf of sets $\mathcal{F}$ there is a space $|\mathcal{F}|$ which enjoys the property that homotopy classes of maps $[X,|\mathcal{F}|]$ are in natural bijection with $\mathcal{F}[X]$.\end{prop}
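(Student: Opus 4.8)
The construction of $|\mathcal F|$, and the argument, are those of Madsen and Weiss $\cite{M-W}$ (compare $\cite{GMTW}$); what follows is the plan. For $p\geq 0$ let $\Delta^p_e$ denote the \emph{extended $p$-simplex}, the affine hyperplane $\{(x_0,\dots ,x_p)\in\mathbb R^{p+1}\mid x_0+\cdots +x_p=1\}$; it is a smooth manifold without boundary, diffeomorphic to $\mathbb R^p$, and the usual affine coface and codegeneracy maps make $[p]\mapsto \Delta^p_e$ into a cosimplicial smooth manifold. Hence $[p]\mapsto\mathcal F(\Delta^p_e)$ is a simplicial set, and I take
$$|\mathcal F|\;:=\;\bigl|\,[p]\mapsto\mathcal F(\Delta^p_e)\,\bigr|,$$
its geometric realization. (As a sanity check: $\Delta^0_e$ is a point and $\Delta^1_e\cong\mathbb R$, so $\pi_0|\mathcal F|$ is $\mathcal F(\mathrm{pt})$ modulo the relation of being joined by an element of $\mathcal F(\mathbb R)$, which is exactly $\mathcal F[\mathrm{pt}]$.) The goal is to produce mutually inverse natural maps $c_X\colon\mathcal F[X]\to[X,|\mathcal F|]$ and $t_X\colon[X,|\mathcal F|]\to\mathcal F[X]$.

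First I would build $c_X$ by a partition-of-unity construction. Given $s\in\mathcal F(X)$, choose a countable locally finite good cover $\{U_j\}_{j\in J}$ of $X$ with $J$ totally ordered, a subordinate smooth partition of unity $\{\varphi_j\}$, and (inductively over $|A|$, using contractibility of $U_A:=\bigcap_{j\in A}U_j$) smooth maps $r_A\colon\Delta^{|A|-1}_e\to U_A$ compatible with the coface maps. Sending $x\in X$ to $\bigl[\,r_{A(x)}^{\,*}\bigl(s|_{U_{A(x)}}\bigr),\,(\varphi_j(x))_{j\in A(x)}\,\bigr]$, where $A(x)$ is the set of indices active at $x$, defines a continuous map $X\to|\mathcal F|$; a swindle over $X\times\mathbb R$ then shows that its homotopy class is independent of the cover, the partition of unity and the $r_A$, and depends only on the concordance class of $s$. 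For $t_X$, represent a class by a map $f\colon X\to|\mathcal F|$ and put it in cellular general position with respect to the skeletal filtration of $|\mathcal F|$; over the preimage $V_\sigma$ of the open cell of each nondegenerate simplex $\sigma\in\mathcal F(\Delta^p_e)$ the map $f$ is then governed by $\sigma$ together with a smooth map $V_\sigma\to\Delta^p_e$, and pulling $\sigma$ back along the latter produces $s_\sigma\in\mathcal F(V_\sigma)$; by cellularity these agree on overlaps, so the sheaf axiom glues them to an element of $\mathcal F(X)$ whose concordance class is $t_X([f])$. That $c_X$ and $t_X$ are inverse, and natural in $X$, I would then check by running the same transversality-and-gluing bookkeeping one dimension up, over $X\times\mathbb R$.

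I expect the main obstacle to be the interface between the homotopy theory of $|\mathcal F|$ and the sheaf axiom, which I would isolate as the general realization lemma of $\cite{M-W}$: a morphism of sheaves $u\colon\mathcal F\to\mathcal G$ that induces a bijection $\mathcal F[M]\to\mathcal G[M]$ for every manifold $M$ (equivalently, in the relative form over each $(\Delta^n_e,\partial\Delta^n_e)$) induces a weak homotopy equivalence $|\mathcal F|\to|\mathcal G|$. Its proof is an induction up the skeleta of $|\mathcal F|$ in which attaching an $n$-cell must be matched against gluing a sheaf element over a good cover of $\Delta^n_e$, and it is precisely here that one genuinely uses both the sheaf property and the fact that $|\mathcal F|$ was built out of honest manifolds $\Delta^p_e$, so that ``restrict'' and ``pull back'' make sense. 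With this lemma available, bijectivity of $c_X$ follows cheaply: apply it to $\mathcal G(U):=\mathrm{Map}(U,|\mathcal F|)$ — for which concordance is literally homotopy, so $\mathcal G[X]=[X,|\mathcal F|]$ and $|\mathcal G|\simeq|\mathcal F|$ — together with the morphism $\mathcal F\to\mathcal G$ underlying $c$, reducing everything to the contractible case $X=\Delta^n_e$ rel boundary, where the partition-of-unity and transversality manipulations are straightforward. Everything outside the realization lemma — local finiteness of covers, smoothing maps into CW complexes, and the swindles over $X\times\mathbb R$ — is routine.
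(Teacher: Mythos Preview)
Your plan is correct and aligns with the paper's own treatment: the paper does not prove this statement at all but simply defers to Appendix~A of \cite{M-W}, and the definition of $|\mathcal F|$ you give (realization of $[p]\mapsto\mathcal F(\Delta^p_e)$) is exactly the one the paper records immediately after the proposition. Your sketch is a reasonable outline of the Madsen--Weiss argument the citation points to, so there is nothing substantively different to compare.
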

\begin{proof} See Appendix A in $\cite{M-W}$ for details\end{proof}.

\begin{defn}The topological realization of $\mathcal{F}$, denoted $|\mathcal{F}|$, is defined to be the realization of the simplicial set
$$[l]\mapsto\mathcal{F}({\Delta}^l_e)$$ where ${\Delta}^{l}_e=\{(t_0,...,t_k)\;{\in}\;\mathbb{R}^{l+1}|{\sum}t_i=1\}$ is the extended l-simplex.\end{defn}

\begin{cor} $\kzerospec$ is weak homotopy equivalent to $|\sheafD|$.\end{cor}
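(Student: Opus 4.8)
The plan is to read off the corollary from the two representability statements already in hand, with no new geometry. The preceding proposition gives a bijection $[X,\kzerospec]\approxeq\sheafD[X]$ natural in the boundaryless manifold $X$, and Proposition~\ref{sheafclass} (via the Madsen--Weiss machinery) gives a bijection $\sheafD[X]\approxeq[X,|\sheafD|]$, likewise natural in $X$. Composing, one obtains a natural bijection $[X,|\sheafD|]\approxeq[X,\kzerospec]$ of set-valued functors on the category of smooth manifolds without boundary and smooth maps. By Proposition~\ref{PuppeLaures} the space $\kzerospec$ is the zeroth space of an honest $\Omega$-spectrum, hence has the homotopy type of a CW complex; the realization $|\sheafD|$ of a simplicial set also has CW type. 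So weak homotopy equivalence is the right target, and it suffices to produce a map inducing the bijection above and then to evaluate it on a point and on the spheres.

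To produce such a map I would use the tautological concordance class on $|\sheafD|$. By construction $|\sheafD|$ is the realization of the simplicial set $[l]\mapsto\sheafD({\Delta}^{l}_{e})$, and each extended simplex ${\Delta}^{l}_{e}$ is a boundaryless manifold; I expect the proof of the preceding proposition to attach to a sheaf element over a manifold an actual (not merely a homotopy-class) map to $\kzerospec$, compatibly with restriction, so that applying it to the tautological $\sheafD$-elements over the ${\Delta}^{l}_{e}$ yields a simplicial map and hence a map $u\colon|\sheafD|\rightarrow\kzerospec$. Naturality of the two bijections then forces $u_{*}\colon[X,|\sheafD|]\rightarrow[X,\kzerospec]$ to coincide with the composite bijection for every boundaryless manifold $X$. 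Taking $X$ to be a point and each sphere $S^{n}$ (all closed, hence boundaryless) shows $u$ is a bijection on $\pi_{0}$ and an isomorphism on all higher homotopy groups, so $u$ is a weak homotopy equivalence.

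The only step that is not purely formal, and where I would be most careful, is the passage from ``natural bijection of functors restricted to boundaryless manifolds'' to ``weak equivalence of spaces''. Two points need attention: that the bijection of Proposition~\ref{sheafclass} is natural under pullback of sheaf elements in a way compatible with the naturality in the preceding proposition, so that the composite is genuinely a natural transformation before $u$ is extracted; and that evaluation on points and spheres really detects weak equivalences in this situation, i.e. that the map $u$ above is the one implementing the natural bijection. Both are precisely the content of the representability arguments of Madsen--Weiss, Appendix~A of \cite{M-W}, on which Proposition~\ref{sheafclass} already rests, so I would import that formalism rather than reprove it. If one prefers to avoid building $u$ by hand, an alternative is to phrase the preceding proposition sheaf-theoretically --- realizing $\kzerospec$ as $|\mathcal{G}|$ for a tautological Pontrjagin--Thom ``cocycle'' sheaf $\mathcal{G}$ and exhibiting a map of sheaves $\mathcal{G}\rightarrow\sheafD$ inducing a bijection on concordance classes --- and then quote the Madsen--Weiss lemma that such a map of sheaves realizes to a weak equivalence.
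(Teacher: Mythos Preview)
Your proposal is correct and follows essentially the same approach as the paper: compose the two representability bijections and test on spheres. The paper's own proof is a single sentence --- ``Let $X$ range over all spheres and invoke the two previous propositions'' --- so your version is in fact a more careful elaboration of the same argument, supplying the map $u$ and the naturality check that the paper leaves implicit.
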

\begin{proof}  Let $X$ range over all spheres and invoke the two previous propositions.\end{proof}

We have seen that $\kzerospec$ is modelled by the realization of an appropriate sheaf.  We might ask if something similar is true for $\Cob$.  Indeed, a result from $\cite{M-W}$ is that there is a sheaf of categories $\sheafC$ which has the property that $B|\sheafC|$ is weak homotopy equivalent to $B\Cob$.   An analogous result in the setting of $\brac$-spaces is proved later in the section.  From here it remains to show that $B|\sheafC|$ is equivalent to $|\sheafD|$.  We appear to be stuck since we are trying to show equivalence between a sheaf of categories and a sheaf of sets.  

Fortunately, $\cite{M-W}$ (appendix A) provides us with two tools to show this equivalence.  The first is a general procedure to take a sheaf of small categories $\mathcal{F}$, and produce an associated sheaf of sets, $\beta\mathcal{F}$ such that there is a weak homotopy equivalence $|\beta\mathcal{F}|{\simeq}B|\mathcal{F}|$.  Here is the recipe for constructing $\beta\mathcal{F}$.  Choose an uncountable indexing set $J$.  An element of $\beta\mathcal{F}(X)$ is a pair $(\mathcal{U},\Psi)$ where $\mathcal{U}=\{U_j|j{\in}J\}$ is a locally finite open cover of $X$, and $\Psi$ is a certain collection of morphisms.  In detail: given a non-empty finite subset $R\subset{J}$, let 
$$U_R\;\;:=\;\;\underset{j{\in}R}{\cap}U_j.$$ 
Then $\Psi$ is a collection ${\varphi}_RS\;{\in}\;N_1\mathcal{F}(U_S)$ indexed by pairs $R\subset{S}$ of non-empty finite subsets of $J$ subject to the conditions\\
i) ${\varphi}_RR=\text{id}_{c_R}$ for an object $c_R\;{\in}\;N_0\mathcal{F}(U_R)$\\
ii)For each non-empty finite $R\subset{S}$, ${\varphi}_{RS}$ is a morphism from $c_S$ to $c_R|U_S$,\\
iii) For all triples ${R}\;\subset\;{S}\;\subset\;{T}$ of finite non-empty subsets of $J$, we have
$${\varphi}_{RT}=({\varphi}_{RS}|U_T){\circ}{\varphi}_{ST}.$$
Theorem 4.1.2 of [MW02] asserts a weak homotopy equivalence\\
$$|\beta\mathcal{F}|{\simeq}B|\mathcal{F}|.$$

The second tool from $\cite{M-W}$ is a useful criteria for when a map between two sheaves of sets induces a weak homotopy equivalence on their realizations.   It is called the $\textit{relative surjectivity criterion}$.  We give a description now.  If $A$ is a closed subset of $X$, and $s\;{\in}\;\text{colim}\mathcal{F}(U)$ where $U$ runs over open neighborhoods of $A$.  Let $\mathcal{F}(X,a;s)$ denote the subset of $\mathcal{F}$ consisting of elements which agree with $s$ in a neighborhood of $A$.  Two elements $t_0,t_1$ are concordant relative to $A$ if they are concordant by a concordance whose germ near $A$ is the constant concordance of $s$.  Let $\mathcal{F}[X,A;s]$ denote the set of such concordance classes.

\begin{prop}$\cite{M-W}\textbf{ Relative Surjectivity Criteria - }$A map $$\tau:\mathcal{F}_1\rightarrow\mathcal{F}_2$$is a weak equivalence provided it induces a surjective map $$\mathcal{F}_1[X,A;s]\rightarrow\mathcal{F}_2[X,A;{\tau}(s)]$$for all $(X,A,s)$.\end{prop}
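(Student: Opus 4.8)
The plan is to derive the criterion from the representability result of Proposition~\ref{sheafclass}, which supplies a natural bijection $[X,|\mathcal{F}_i|]\cong\mathcal{F}_i[X]$, together with its relative refinement (the version occurring in Madsen--Weiss's appendix) identifying relative homotopy sets of $|\mathcal{F}_i|$ with relative concordance sets $\mathcal{F}_i[X,A;s]$. Recall the standard characterization: a map $f\colon Y\to Z$ is a weak homotopy equivalence if and only if for every $n\geq 0$ and every commutative square with $\partial D^n\to Y$ along the top and $D^n\to Z$ along the bottom, there is a diagonal $D^n\to Y$ making the upper triangle commute on the nose and the lower triangle commute up to homotopy relative to $\partial D^n$. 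I would apply this to $f=|\tau|\colon|\mathcal{F}_1|\to|\mathcal{F}_2|$ and translate the square: the top map corresponds, up to concordance, to an element $s$ of $\mathcal{F}_1$ defined near $A=\partial D^n$, and the bottom map to an element $t\in\mathcal{F}_2(D^n)$ whose restriction near $A$ is concordant to $\tau(s)$. The case $n=0$ (with $\partial D^0=\emptyset$) is already the hypothesis with $A=\emptyset$, giving surjectivity on path components; the cases $n\geq 1$ will give the $\pi_n$-isomorphisms at every basepoint, and hence the weak equivalence.

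The first genuine step is to upgrade ``concordant near $A$'' to ``literally equal near $A$'': one homotopes $t$ within its concordance class by precomposing with a collar reparametrization near $\partial D^n$ (the identity outside a boundary neighborhood) so that the resulting element lies in $\mathcal{F}_2(D^n,A;\tau(s))$. The hypothesis applied to the triple $(D^n,A,s)$ then yields $\tilde t\in\mathcal{F}_1(D^n,A;s)$ with $\tau(\tilde t)$ concordant to $t$ relative to $A$. Unwinding the translation, $\tilde t$ is the sought diagonal: it restricts to $s$ near $\partial D^n$ (upper triangle commutes strictly), and the relative concordance $\tau(\tilde t)\sim t$ provides the homotopy of the lower triangle rel $\partial D^n$. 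In particular, taking $X=\partial D^1=S^0$ recovers injectivity on $\pi_0$, and the general $n$ recovers injectivity and surjectivity on $\pi_n$; equivalently one may phrase the injectivity part concretely using the triple $(X\times\mathbb{R},\,X\times(\mathbb{R}\smallsetminus(0,1)),\,s)$ with $s$ the germ of the two product concordances, which is the same manoeuvre in disguise.

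The main obstacle --- and the reason this is really a result of Madsen--Weiss rather than a one-line deduction --- is that the sheaves here are defined only on manifolds \emph{without} boundary, so one cannot feed in $D^n$ and $\partial D^n$ directly: the honest argument must replace $\partial D^n\hookrightarrow D^n$ by an open collared inclusion $\mathbb{R}^{n-1}\times\mathbb{R}\hookrightarrow\mathbb{R}^n$ (or work throughout with the extended simplices $\Delta^l_e$ that define $|\mathcal{F}|$), perform the collar reparametrization of concordances in that setting without disturbing the ambient data, and check that every identification is natural enough to assemble the relative concordance sets into relative homotopy sets compatibly with $\tau$. This bookkeeping is precisely the content of Appendix~A of \cite{M-W}, which I would follow; none of it interacts with the $\langle k\rangle$-structure, so the statement and its proof are insensitive to the presence of corners.
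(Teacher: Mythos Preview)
The paper does not actually prove this proposition: it is stated with a citation to \cite{M-W} and used as a black box, with no proof environment following it. Your sketch is therefore not comparable to anything in the paper, but it is a correct outline of the argument that appears in Appendix~A of Madsen--Weiss, and you have correctly identified the one genuine technical wrinkle (that the sheaves are only defined on manifolds without boundary, so $D^n$ and $\partial D^n$ must be replaced by open collared models before the lifting argument can be run). Since you explicitly defer to \cite{M-W} for that bookkeeping, your proposal amounts to the same thing the paper does---invoke the reference---together with an accurate summary of what the reference contains.
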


The rest of the section is devoted to the following tasks:\\
1) Establishing the equivalence $|\sheafD|\;{\simeq}\;\kzerospec$\\
2) Establishing the equivalence $B|\sheafC|\;{\simeq}\;B\Cob$\\
3) Finding a zig-zag of equivalent sheafs, using the $\textbf{Relative Surjectivity Criteria}$.\\

\begin{defn}Let $\sheafDtr$ be a sheaf of categories defined by letting $\sheafDtr(U)$ denote the set of triples $(W,g,a)$ that satisfy:\\
 i) $(W,g)\;{\in}\;\sheafD(U)$,\\
 ii) $a:W\;{\longrightarrow}\;\mathbb{R}$ is smooth,\\
 iii) for each $x\;{\in}\;X$, $f:\pi^{-1}(x)\;{\longrightarrow}\;\mathbb{R}$ is $\brac$-regular (see appendix \ref{appendix1}.\end{defn}

\begin{prop} The forgetful map $\sheafDtr\;{\longrightarrow}\;\sheafD$ is a weak equivalence.\end{prop}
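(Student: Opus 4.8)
The plan is to apply the Relative Surjectivity Criterion of \cite{M-W} quoted above to the forgetful map $\tau\colon\sheafDtr\to\sheafD$, $(W,g,a)\mapsto(W,g)$ (regarding $\sheafDtr$ as the underlying sheaf of sets). Thus it is enough to check that for every manifold $X$ without boundary, every closed $A\subset X$, and every germ $s$ near $A$ of an element of $\sheafDtr$, the induced map $\sheafDtr[X,A;s]\to\sheafD[X,A;\tau(s)]$ is surjective. Unwinding this, one is handed an element $(W,g)\in\sheafD(X)$ agreeing with $\tau(s)$ near $A$ together with the germ near $\pi^{-1}(A)$ of a fibrewise $\brac$-regular function recorded by $s$, and one must produce --- after a concordance of $(W,g)$ that is constant near $A$ --- a smooth $a\colon W\to\mathbb{R}$ which extends that germ and whose restriction to each fibre $\pi^{-1}(x)$ is $\brac$-regular.

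First I would isolate the local input. Since $(\pi,f)$ is proper and $\pi\colon W\to X$ is a submersion with neat $\brac$-manifold fibres, each fibre $\pi^{-1}(x)$ is a compact $\brac$-manifold, and by the neat analogue of Ehresmann's theorem from Section 7 the map $\pi$ is, locally over $X$, a trivial bundle of $\brac$-manifolds. Over such a trivialising chart a fibrewise $\brac$-regular function exists and may be chosen smoothly in the base parameter (for instance a generic affine function of the $\mathbb{R}\times\mathbb{R}^{d-1+n}$-coordinates, using that on a compact $\brac$-manifold the $\brac$-regular functions are open and dense in the strong topology). The two facts I need are: fibrewise $\brac$-regular functions exist locally over $X$, and any two of them that agree near $\pi^{-1}(A)$ can be joined, after a concordance of $(W,g)$ rel $A$, by a path of fibrewise $\brac$-regular functions. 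The second is the neat, parametrised form of the classical statement that a regular value persists under small perturbation, and it is what makes the gluing go through.

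Granting these, the global construction is the familiar one from \cite{GMTW} and \cite{M-W}. Take a locally finite cover $\{V_\beta\}$ of $X$ refining a cover by trivialising charts, arranged so that every $V_\beta$ meeting $A$ lies in the domain of the given germ; on those set $a_\beta$ equal to the germ, and on the others choose fibrewise $\brac$-regular $a_\beta\colon\pi^{-1}(V_\beta)\to\mathbb{R}$. Patch the $a_\beta$ together one index at a time: to bring in $V_{\beta+1}$, pick a bump function $\lambda$ on $X$ equal to $1$ on a shrinking of $V_{\beta+1}$ and supported in $V_{\beta+1}$, and over the collar region where $0<\lambda<1$ replace the function constructed so far and $a_{\beta+1}$ by a concordance interpolating between them through fibrewise $\brac$-regular functions, available by the connectivity fact since only finitely many patches are in play and everything is proper. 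Carrying this out over a shrinking exhaustion and keeping every concordance constant near $A$ yields $a$ together with a concordance of $(W,g)$ rel $A$ into the image of $\tau$; this is precisely relative surjectivity, and the Criterion then gives the proposition.

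The step I expect to be the main obstacle is the persistence/connectivity statement for $\brac$-regular functions. Transversality is more delicate at the corners of a $\brac$-manifold than in the closed case, so one must verify that the appendix's notion of $\brac$-regularity is genuinely an open condition and that a small, base-parameter-uniform deformation joining two fibrewise $\brac$-regular functions that already agree near $A$ can be kept fibrewise $\brac$-regular throughout. This is the neat analogue of ``regular values are stable and generic'' and should follow from the $\brac$-transversality results of Section 7; once it is in hand, the rest is routine partition-of-unity bookkeeping of the kind used repeatedly in \cite{M-W}.
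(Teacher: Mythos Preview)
Your proposal misses the key device the paper uses here, and the gap is exactly the ``main obstacle'' you flag at the end.

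In the paper, $\sheafDtr$ is a sheaf of \emph{categories} (a poset: $(W,g,a_0)\le (W,g,a_1)$ when $a_0\le a_1$), and the proposition really asserts $B|\sheafDtr|\simeq|\sheafD|$.  The paper does \emph{not} apply the Relative Surjectivity Criterion to the object-level map $\sheafDtr\to\sheafD$; it first invokes the Madsen--Weiss equivalence $|\beta\sheafDtr|\simeq B|\sheafDtr|$ and then proves relative surjectivity for $\beta\sheafDtr\to\sheafD$.  An element of $\beta\sheafDtr(X)$ is a locally finite cover together with an \emph{ordered family} $\{a_R\}$ of fibrewise $\brac$-regular functions, one for each finite intersection.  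The lift is then built (following GMTW, Prop.~4.2) by choosing for each new patch a \emph{constant} $a_j$ which is large (namely $a_j>q(x)=1/b(x)$ on $U_j$) and a $\brac$-regular value of $f_x$ for all $x\in U_j$; the $a_R$ for mixed $R$ are obtained by taking $\min$ or restricting to the old data.  No gluing of regular-value functions is required, and no concordance of $(W,g)$ is needed.

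Your plan, by contrast, tries to manufacture a \emph{single} global fibrewise $\brac$-regular $a$ by patching local choices with bump functions and concordances.  That runs straight into the problem you identify: a convex combination of fibrewise regular functions need not be fibrewise regular, and the space of such functions is not obviously connected in the required relative sense.  The $\brac$-transversality appendix gives openness and density, but not the path-connectivity you would need to push your interpolation through; your sketch of ``replace \dots by a concordance interpolating between them through fibrewise $\brac$-regular functions'' is precisely the unproved step.  The $\beta$ construction is not bookkeeping --- it is the mechanism that replaces that missing connectivity by the much weaker requirement of a compatible \emph{poset} of regular-value functions, and that is what makes the GMTW argument go.  You should reorganize the proof to pass through $\beta\sheafDtr$ and use constant regular values dominated by $1/b$ as above.
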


\begin{proof}As stated in the introduction to this chapter, there is a known homotopy equivalence $|{\beta}\sheafDtr|{\simeq}B|\sheafDtr|$ from $\cite{M-W}$, chapter 4.1 so it suffices to show the following lemma.

\begin{lemma}${\beta}\sheafDtr\;{\longrightarrow}\;\sheafD$ satisfies the relative  surjectivity criteria, and thus $|\beta\sheafDtr|\;{\simeq}\;|\sheafD|$.\end{lemma}

\begin{proof} The proof follows exactly as in $\cite{GMTW}$ Proposition 4.2 provided we change ``transverse" to ``$\brac$-transverse".  For the reader's sake, we include the argument here.\\

We must show the forgetful map ${\beta}\sheafDtr\rightarrow{\sheafD}$ satisfies the relative surjectivity condition.  To that end, let $A$ be a closed
subset of $X$, let $(W,g)$ be an element of $\sheafD(X)$, and suppose we are given a lift to ${\beta}\sheafDtr(U')$ of the restriction of $W$ to some open neighborhood $U$ of $A$.  This lift is given by a locally finite open cover $\mathcal{U'}=\{\;U_j\;|\;j{\in}J\;\}$, together with smooth functions $a_R:U_R'{\rightarrow}\mathbb{R}$, one for ecah finite non-empty $R{\subset}J$.  Let $J'{\subset}J$ denote the set of $j$ for which $U_j$ is non-empty, and let $J''=J-J'$.  Now choose a smoothing function $b:X\rightarrow[0,\infty)$ with $A{\subset}\text{Int}{b}^{-1}\{0\}$ and $b^{-1}(0){\subset}U$.  Set $q=\frac{1}{b}:X{\rightarrow}(0,\infty]$.   We can assume that $q(x)>a_R(x)$ for all $R{\subset}J'$ (make $U$ smaller if not).  Now for each $a{\in}X-U$, choose $a{\in}\mathbb{R}$ satisfying:\\
i) $a>q(x)$\\
ii) $a$ is a $\brac$-regular value for $f_x:{\pi}^{-1}(x)\rightarrow\mathbb{R}$.\\
Such an $a$ exists by appendix \ref{appendix1} and furthermore the same value $a$ will satisfy i) and ii) for all $x$ in a small neighborhood $U_x{\subset}X-A$ of $X$, so we can pick an open covering $U''=\{\;U_j\;|\;j{\in}J''\;\}$ of $X-U$, and real numbers $a_j$ such that i) and ii) are satisfied for all $x{\in}U_j$.  The covering $\mathcal{U}''$ may be assumed to be locally finite.  For each finite non-empty $R{\subset}J''$, set $a_R=\text{min}\{a_j|j{\in}R\}$.  For $R{\subset}J(=J'\cup{J}'')$, write $R=R'{\cup}R''$ with $R'{\subset}J'$ and $R''{\subset}J''$, and define $a_R=a_{R'}$ if $R'{\neq}\emptyset$.  This defines smooth functions $a_R:U_R\rightarrow\mathbb{R}$ for all finite non-empty subsets $R{\subset}J$ ($a_R$ is a constant function for $R{\subset}J''$) with the property that $R\subset{S}$ implies $a_S{\leq}a_R|U_S$.  This defines an element of $\beta\sheafDtr(X,A)$ which lifts $W{\in}\sheafD(X)$ and extends the lift given near $A$.\end{proof}

This also ends the Proposition.\end{proof}

For what follows we will need the following convention.  Suppose $X$ is a smooth manifold without boundary and $a_0$,$a_1\:X{\to}\mathbb{R}$
are such that $a_0(x){\leq}a_1(x)$ for all $x{\in}X$. Then $X{\times}(a_0,a_1):=\{\;(x,u)\;{\in}\;X{\times}\mathbb{R}\;|\;a_0(x)<u<a_1(x)\;\}$.
\begin{defn}Given ${\epsilon}>0$ and two smooth functions $a_0,a_1:X\to{\mathbb{R}}$ let $\sheafCtr(X,a_0,a_1,\epsilon)$ be the set of pairs $(W,g)$ where $W$ is a neat $\langle{k}\rangle$-submanifold of $U{\times}(a_0-\epsilon,a_1+\epsilon){\times}\keucNless$ and $g:W{\to}E_N$ is a $\brac$-map which satisfy the criteria below.  In what follows let $\pi$ and $f$ be the projection maps onto $U$ and $\mathbb{R}$ respectively.\\
 i) $\pi$ is a $\brac$-submersion with $d+k$ dimensional fibers,\\
 ii) $(\pi,f)$ is proper,\\
 iii) $(\pi,f):(\pi,f)^{-1}(X{\times}(a_{\nu}-\epsilon,a_{\nu}+\epsilon))\;{\longrightarrow}\;X{\times}(a_{\nu}-\epsilon,a_{\nu}+\epsilon)$ is a $\brac$-submersion for $\nu=0,1$,\\
 iv) $\strucmapN{\circ}g=T^{\pi}W_N$ (see the definition of $\sheafD$ for the definition of $T^{\pi}W_N)$.\end{defn}

\begin{defn}$\textit{Let }C^{tr,\theta}_{d,\langle{k}\rangle}(X,a_0,a_1):=\underset{{\epsilon}\rightarrow{0}}{\text{colim}}C_{d,\brac}^{tr,\theta}(X,a_0,a_1,\epsilon)$.\end{defn}

\begin{defn}$\text{mor}\sheafCtr(X):={\coprod}\sheafCtr(X,a_0,a_1)$ ranging over all pairs $(a_0,a_1)$ of smooth functions where $a_0\;{\leq}\;a_1$ and the set of $x\;{\in}\;X$ suc that $a_0(x)=a_1(x)$ is a (possibly empty) union of connected components of $X$.\end{defn}

Two morphisms $s\;{\in}\;\sheafCtr(X,a_0,a_1)$ and $t\;{\in}\;\sheafCtr(X,a_1,a_2)$ are composable if there exists some
$\epsilon:X{\to}(0,\infty)$ and $u\;{\in}\;\sheafCtr(X,a_1-\epsilon,a_1+\epsilon)$ such that
$s=u\;{\in}\;\sheafCtr(a_1-\epsilon,a_1)$ and $t=u\;{\in}\;\sheafCtr(a_1,a_1+\epsilon)$.  In this case composition
of the morphisms is given by taking the union of the two submanifolds and gluing the appropriate maps.  The objects of
$\sheafCtr(X)$ are identified with the set of identity morphisms, $\sheafCtr(X,a_0,a_0)$.\\

\begin{defn}Let $\sheafC(X)$ be the subcategory of $\sheafCtr(X)$ consisting of morphisms  $(W,g,a,b)\;{\in}\;\sheafCtr(X,a,b)$ that satisfy two additional properties:\\
i) ${\pi}^{-1}(x)$ is a neat $<$k+1$>$-submanifold of $\{x\}{\times}[a,b]{\times}\mathbb{R}^k_+{\times}\mathbb{R}^{d-1+\infty}$,\\
ii) In a neighboorhood of ${\partial}_{k+1}{\pi}^{-1}(x)$ which we write as ${\partial}_{k+1}{\pi}^{-1}(x){\times}[0,\epsilon)$, $g={\partial}_{k+1}g{\times}\text{id}_{[0\epsilon)}$  (${\partial}_{k+1}$ is in the $[a,b]$ direction). 
\end{defn}

\begin{prop}The inclusion functor $\sheafC(X)\;{\longrightarrow}\;\sheafCtr(X)$ is a weak equivalence.\end{prop}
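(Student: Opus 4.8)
The plan is to show that the inclusion $\sheafC(X) \to \sheafCtr(X)$ induces a weak equivalence on classifying spaces of realizations, via the $\beta$-construction and the relative surjectivity criterion, exactly parallel to the analogous statement in $\cite{GMTW}$. First I would recall that it suffices, by Theorem 4.1.2 of $\cite{M-W}$, to work with the associated sheaves of sets $\beta\sheafC$ and $\beta\sheafCtr$, and then apply the relative surjectivity criterion: given a closed $A \subset X$, an element of $\sheafCtr(X)$, and a lift over a neighborhood of $A$ to $\beta\sheafC$, I must produce a lift over all of $X$ extending the given one up to concordance rel $A$. The point of the two extra conditions defining $\sheafC$ is purely local near the top boundary face ${\partial}_{k+1}$: condition (i) asks that the fiberwise cobordism be a neat $\langle k+1\rangle$-submanifold, and condition (ii) asks that the structure map $g$ be a product in a collar of ${\partial}_{k+1}\pi^{-1}(x)$. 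Both are achievable by a collaring and straightening argument.

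The key steps, in order, are: (1) reduce to the $\beta$-sheaves and state precisely what the relative surjectivity criterion demands here; (2) given $(W,g,a,b) \in \sheafCtr(X,a_0,a_1)$, use the neat embedding structure (the perpendicularity in part iii of the definition of neat embedding, together with Theorem \ref{mainthm2} and the collaring produced in the proof of Lemma \ref{extendtoneighborhood}) to choose, fiberwise and smoothly in $x \in X$, a collar ${\partial}_{k+1}\pi^{-1}(x) \times [0,\epsilon)$ of the top face inside the fiber; (3) isotope $W$ within $U \times \mathbb{R} \times \keucNless$ so that near this collar it becomes a genuine product, arranging condition (i); (4) homotope the structure map $g$ so that on the collar it equals ${\partial}_{k+1}g \times \mathrm{id}_{[0,\epsilon)}$, using that $\strucmapN$ is a fibration to lift the required deformation of $T^{\pi}W_N$ back to $\struc$ while preserving the compatibility $\strucmapN \circ g = T^{\pi}W_N$; (5) check that all of these modifications can be performed to agree with the given lift on a (possibly smaller) neighborhood of $A$, and that the straight-line interpolation between the original data and the modified data is a concordance rel $A$, living in $\sheafCtr(X \times \mathbb{R})$ — or rather in the appropriate collared version — so that the class in $\sheafCtr[X,A;s]$ is unchanged. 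Surjectivity of $\beta\sheafC[X,A;s] \to \beta\sheafCtr[X,A;\tau(s)]$ then follows, and the relative surjectivity criterion gives the weak equivalence.

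The main obstacle I expect is step (2)–(3): making the fiberwise collar of the ${\partial}_{k+1}$ face depend smoothly on the base point $x$ and simultaneously respecting the lower corner strata, i.e. choosing the collar as a $\langle k\rangle$-collar of a $\langle k+1\rangle$-manifold so that it is compatible with the neatness of all faces ${\partial}_1,\dots,{\partial}_k$. This is where the differential-topology input from the appendix (the $\brac$-versions of collar and tubular neighborhood theorems, and $\brac$-transversality) has to be invoked carefully; the perpendicularity clause in the definition of neat embedding is precisely what makes such a compatible collar exist, and parametrized versions of Lemma \ref{extendtoneighborhood} supply the smooth dependence on $x$. Once the collar is in hand, straightening $W$ to a product near it and then lifting the correction of $g$ through the fibration $\strucmapN$ are routine, and the relative (rel $A$) control is built into the partition-of-unity bookkeeping exactly as in the proof that ${\beta}\sheafDtr \to \sheafD$ satisfies relative surjectivity.
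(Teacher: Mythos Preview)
Your geometric core is exactly right and matches the paper: the difference between $\sheafC$ and $\sheafCtr$ is purely a collaring condition at the $\partial_{k+1}$ face, and one straightens near that face by choosing a fiberwise collar, isotoping the embedding to a product there, and lifting the resulting homotopy of $T^{\pi}W_N$ through the fibration $\theta_N$ to adjust $g$. The only content beyond GMTW's Proposition~4.4 is your step~(2)--(3) concern, ensuring the collar respects the lower $\langle k\rangle$-strata; as you say, the perpendicularity built into neat embeddings provides this. The paper does not spell any of this out, simply asserting that the GMTW proof ``adapts easily to the $\brac$-space setting''.

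Where you diverge from the paper is in the sheaf-theoretic packaging. Both $\sheafC$ and $\sheafCtr$ are sheaves of \emph{categories}, and the standard route (GMTW's, hence the paper's) is to compare them levelwise on nerves: for each $p\ge 0$ one shows that the map of sheaves of \emph{sets} $N_p\sheafC \to N_p\sheafCtr$ satisfies the relative surjectivity criterion, whence $|N_p\sheafC|\simeq|N_p\sheafCtr|$ and so $B|\sheafC|\simeq B|\sheafCtr|$. Your proposal to pass instead through $\beta\sheafC \to \beta\sheafCtr$ is a legitimate alternative in principle, but it is strictly harder: an element of $\beta\sheafCtr(X)$ is a locally finite cover together with a whole coherent system $\{\varphi_{RS}\}$ of morphisms, all of which would have to be straightened compatibly, whereas in the levelwise approach you only ever handle a single $p$-string of composable morphisms at a time. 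Your phrasing ``an element of $\sheafCtr(X)$, and a lift \dots\ to $\beta\sheafC$'' is also not quite well-formed, since $\sheafCtr(X)$ is a category rather than a set; the input to relative surjectivity for $\beta\sheafC\to\beta\sheafCtr$ should be an element of $\beta\sheafCtr(X)$. Drop the $\beta$-layer and run your steps (2)--(5) as the proof that $N_p\sheafC \to N_p\sheafCtr$ is relatively surjective; that is the argument the paper has in mind.
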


\begin{proof}The proof in $\cite{GMTW}$ (proposition 4.4 page 18) adapts easily to the $\brac$-space
setting.\end{proof}

\begin{prop}There is an equivalence $\sheafDtr\;\longrightarrow\;\sheafCtr$.\end{prop}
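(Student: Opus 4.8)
The plan is to build a map $\sheafDtr(U) \longrightarrow \sheafCtr(U)$ and check it is a weak equivalence on realizations via the relative surjectivity criterion, following the analogous step in $\cite{GMTW}$ (their Proposition 4.5). First I would describe the map itself. Given a triple $(W,g,a) \in \sheafDtr(U)$, the submanifold $W$ lives in $U \times \mathbb{R} \times \mathbb{R}^k_+ \times \mathbb{R}^{d-1+n}$ with $\pi$ a submersion and $a\colon W \to \mathbb{R}$ a function whose restriction to each fibre is $\brac$-regular. The idea is to use $a$ to reparametrize the $\mathbb{R}$-coordinate: replace the projection $f$ by $a$ itself (or $a - f$, arranged so that the level sets of the new $\mathbb{R}$-valued function are $\brac$-regular fibrewise), which turns the plain submersion data of $\sheafD$ into the ``transported'' submersion data of $\sheafC^{tr}$ — precisely conditions (i)–(iii) of $\sheafCtr(X,a_0,a_1,\epsilon)$ hold because $\brac$-regularity of $a$ on fibres is exactly what is needed for $(\pi,a)$ to be a $\brac$-submersion near level sets. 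One must take a colimit over the $\epsilon$-neighbourhoods and over $n$, and check naturality in $U$; this is bookkeeping.

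Next I would verify the map respects the categorical structure, i.e. that it sends objects to objects and composable pairs to composable pairs, so that it is a functor of sheaves of categories. Then, to show $|\sheafDtr| \simeq |\sheafCtr|$ (which is really what is needed, since we already have $|\sheafDtr| \simeq |\sheafD|$), I would invoke the relative surjectivity criterion quoted above. Concretely: given a closed $A \subset X$, an element of $\sheafCtr(X)$, and a germ $s$ near $A$ of a lift to $\sheafDtr$, one must extend the lift over all of $X$ relative to $A$. The extension is produced by choosing, fibrewise and locally in $X$, a $\brac$-regular value for the height function — exactly the same device as in the proof that $\beta\sheafDtr \to \sheafD$ is a weak equivalence — patched together with a locally finite cover and a partition-of-unity/min construction to get a globally smooth function $a$ on $X$ (or on $W$), arranged to be $\brac$-regular on each fibre and to agree with the given germ near $A$. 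The existence of such regular values is guaranteed by the $\brac$-Sard/transversality results in Appendix \ref{appendix1}.

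The main obstacle I expect is the same one flagged implicitly by the phrase ``change transverse to $\brac$-transverse'' in the previous proof: making sure that $\brac$-regularity is preserved under all the operations — restriction to fibres, the min of finitely many regular values, small perturbation — and that it behaves well in families over $U$. Ordinary regular values are generic and their basic stability properties are classical, but in the $\brac$ setting one is regulating a whole cubical diagram of faces simultaneously, and one needs the relevant genericity statement (Appendix \ref{appendix1}) to hold for $\brac$-maps, together with the fact that a value which is $\brac$-regular for one fibre remains so for nearby fibres. Granting those appendix results, the rest of the argument is a direct transcription of $\cite{GMTW}$: build the reparametrization map, check functoriality, and run the relative surjectivity criterion using fibrewise $\brac$-regular values and a locally finite patching.

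\begin{proof}
We define the map on $\sheafDtr(U)$ as follows. Given $(W,g,a) \in \sheafDtr(U)$, with $\pi\colon W \to U$ the structural submersion and $f\colon W \to \mathbb{R}$ the height function, consider the pair of functions $(\pi, a)\colon W \to U \times \mathbb{R}$. By condition (iii) in the definition of $\sheafDtr$, for each $x \in U$ the restriction of $a$ to the fibre $\pi^{-1}(x)$ is $\brac$-regular, so near each level set $a^{-1}(c) \cap \pi^{-1}(x)$ the map $(\pi, a)$ is a $\brac$-submersion onto its image. After rescaling we may present $W$ as a neat $\brac$-submanifold of $U \times (a_0-\epsilon, a_1+\epsilon) \times \keucNless$ for suitable smooth $a_0 \leq a_1$ and $\epsilon > 0$ (shrinking near $A$ if a germ is prescribed), and the structural data $(\pi, a, g)$ then satisfies conditions (i)--(iv) of $\sheafCtr(U, a_0, a_1, \epsilon)$: (i) and (iii) hold because $a$ is fibrewise $\brac$-regular, (ii) is properness, transported from $(\pi,f)$ being proper, and (iv) is the tangential condition $\strucmapN \circ g = T^\pi W_N$, which is unchanged. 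Passing to the colimit over $\epsilon \to 0$ and $n \to \infty$ yields an element of $\sheafCtr(U)$, and this assignment is visibly natural in $U$.

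This map carries identity morphisms $\sheafDtr(U,a_0,a_0)$-type data to objects and composable pairs to composable pairs, since composition on both sides is union of submanifolds with gluing of the structure maps $g$; hence we obtain a map of sheaves of categories $\sheafDtr \to \sheafCtr$. By Proposition \ref{sheafclass} it suffices to show the induced map on realizations is a weak equivalence, and we have $|\sheafDtr| \simeq |\sheafD|$ from the preceding proposition, so it is enough to check the relative surjectivity criterion for $\sheafDtr \to \sheafCtr$ directly.

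Let $A \subset X$ be closed, let $(W,g,a_0,a_1) \in \sheafCtr(X)$ represent a morphism, and suppose we are given a germ near $A$ of a lift to $\sheafDtr$. As in the proof that $\beta\sheafDtr \to \sheafD$ is a weak equivalence, choose a smoothing function $b\colon X \to [0,\infty)$ with $A \subset \text{Int}\,b^{-1}\{0\}$ and $b^{-1}(0)$ contained in the neighborhood on which the germ is defined. For each $x \in X$ outside a neighborhood of $A$, Appendix \ref{appendix1} provides a value which is $\brac$-regular for the fibrewise height function $f_x\colon \pi^{-1}(x) \to \mathbb{R}$, and which remains $\brac$-regular for all $x'$ in a small neighborhood; picking a locally finite cover $\{U_j\}_{j \in J''}$ of $X - A$ and real numbers $a_j$ accordingly, and setting $a_R = \min\{a_j \mid j \in R\}$ on the relevant intersections (and using the given germ's data indexed by $J'$ on the remaining intersections, exactly as before), we obtain smooth functions $a_R\colon U_R \to \mathbb{R}$, fibrewise $\brac$-regular, compatible under restriction, and agreeing with the prescribed germ near $A$. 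This assembles to an element of $\beta\sheafDtr(X)$ lifting the given element of $\sheafCtr(X)$ and extending the germ near $A$, which is exactly the relative surjectivity condition. Hence $|\sheafDtr| \simeq |\sheafCtr|$, and the proposition follows.
\end{proof}
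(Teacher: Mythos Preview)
Your proposal has two substantive gaps.

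\textbf{The map is misidentified.} You read $a$ as a function $W\to\mathbb{R}$ and build the functor by replacing the height $f$ with $a$ (``reparametrizing''). In the GMTW argument the paper is invoking, $a$ is a function on the base $X$ (the paper's definition contains a typo here), and the functor $\sheafDtr\to\sheafCtr$ is much simpler: a morphism $(W,g,a_0\le a_1)$ in $\sheafDtr(X)$ is sent to the \emph{restriction} $W\cap\bigl(X\times(a_0-\epsilon,a_1+\epsilon)\times\keucNless\bigr)$, which lands in $\sheafCtr(X,a_0,a_1)$ precisely because fibrewise $\brac$-regularity at $a_\nu$ makes $(\pi,f)$ a $\brac$-submersion near those levels. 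No reparametrization is involved.

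\textbf{The equivalence argument solves the wrong lifting problem.} Your final paragraph reproduces the proof of the \emph{preceding} proposition: choose fibrewise $\brac$-regular values, patch over a locally finite cover, and output an element of $\beta\sheafDtr(X)$. That is the lift from $\sheafD$ to $\beta\sheafDtr$, already done. Here the task is the opposite direction: given $W'$ defined only over the slab $(a_0-\epsilon,a_1+\epsilon)$, produce a $W$ over all of $X\times\mathbb{R}$ restricting to it. The key idea you are missing is \emph{cylindrical extension}: condition (iii) in the definition of $\sheafCtr$ says $(\pi,f)$ is a $\brac$-submersion near $a_0$ and $a_1$, so $W'$ is a product there, and one simply extends it as a product to $(-\infty,a_0)$ and $(a_1,\infty)$. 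That this respects the $\brac$ structure is immediate from the submersion condition, which is why the paper can say the GMTW proof ``adapts''. Choosing regular values plays no role in this step.

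A smaller structural point: $\sheafDtr$ and $\sheafCtr$ are sheaves of \emph{categories}, so the relative surjectivity criterion (stated for sheaves of sets) must be applied level-by-level on nerves $N_p$, not to the sheaves themselves; your argument conflates these.
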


\begin{proof}Again, the proof in $\cite{GMTW}$ (Proposition 4.3 page 18) adapts immediately to the $\brac$-space setting.\end{proof}

\begin{prop}$B\Cob\;\;{\simeq}\;\;B|\sheafC|$\end{prop}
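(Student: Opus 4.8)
The plan is to follow the Madsen–Weiss/GMTW strategy exactly as it is carried out in $\cite{GMTW}$ (Section 4), adapted verbatim to the $\brac$-setting, using the sheaf-of-categories machinery that has already been set up in this section. The starting point is the recollection from $\cite{M-W}$ that the classifying space of a category internal to spaces is weakly equivalent to the classifying space of its ``discretized'' version, provided one passes to the right model: concretely, there is a scanning/realization argument showing that for a topological category $\mathcal{C}$ built out of moduli spaces of embedded submanifolds-with-structure, the space $B\mathcal{C}$ is recovered as $B|\sheafC|$ where $\sheafC$ is the sheaf of categories sending a manifold $X$ to the category of smooth $X$-families of such morphisms (parametrized cobordisms with structure). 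So the proof is really the verification that our $\Cob$ is precisely the topological category whose ``family version'' is the sheaf $\sheafC$ defined above.

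The key steps, in order, are as follows. First I would recall from $\cite{M-W}$ (and its use in $\cite{GMTW}$, Section 2 and Section 4) the general principle: if $\mathcal{D}$ is a topological category whose spaces of objects and morphisms are of the form $\coprod Y(M)/\mathrm{Diff}(M)$ for moduli of embedded manifolds with tangential structure, then there is a sheaf of categories $\mathcal{C}_\mathcal{D}$ on the site of smooth manifolds without boundary with $B|\mathcal{C}_\mathcal{D}| \simeq B\mathcal{D}$; the equivalence is induced by the map sending a point of $B\mathcal{D}$ to the corresponding $0$-simplex family, and is proved by a partition-of-unity/local-triviality argument (this is where one needs that $Y(M)$ is built from a weakly contractible embedding space, cf. Theorem \ref{mainthm2}). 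Second, I would check that the sheaf $\sheafC$ constructed in this section — whose value $\sheafC(X)$ consists of neat $\langle k{+}1\rangle$-submanifolds $W$ of $X\times[a,b]\times\mathbb{R}^k_+\times\mathbb{R}^{d-1+\infty}$ with a $\theta$-structure, submersive over $X$, collared in the $[a,b]$-direction near $\partial_{k+1}$ — is exactly the sheaf $\mathcal{C}_{\Cob}$ produced by that general principle applied to $\mathcal{D}=\Cob$. This is a matter of unwinding definitions: the collaring condition (ii) in the definition of $\sheafC$ is precisely what encodes that near its two boundary components in the $[a,b]$-direction a morphism of $\Cob$ looks like a product (a ``cylinder'' near the incoming and outgoing objects), which is the standard cobordism-category convention; and condition (i), that $\pi^{-1}(x)$ be a neat $\langle k{+}1\rangle$-submanifold, matches the requirement that morphisms of $\Cob$ be $\langle k{+}1\rangle$-cobordisms. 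Third, I would assemble: combining the identification $\sheafC \cong \mathcal{C}_{\Cob}$ with the general equivalence $B|\mathcal{C}_{\Cob}| \simeq B\Cob$ gives the claim.

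I expect the main obstacle to be the second step — verifying that the abstract ``family version'' of $\Cob$ really is the concrete sheaf $\sheafC$, i.e.\ that nothing is lost or gained by the two slightly different bookkeeping conventions (the $\mathbb{R}$-coordinate labels $a,b$ attached to objects and morphisms of $\Cob$ versus the functions $a_0,a_1:X\to\mathbb{R}$ and the parameter $\epsilon$ in $\sheafCtr$, and the collaring/regularity conditions). In $\cite{GMTW}$ this is Proposition 3.1 (or thereabouts), and the proof is a careful but routine argument: one shows that both sides are weakly equivalent to a common ``intermediate'' topological category, or equivalently that the natural comparison map induces an isomorphism on homotopy sheaves by a direct inspection of $n$-parameter families, using again that the relevant embedding spaces are weakly contractible so that forgetting the embedding does no harm. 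I would also need to be slightly careful that the passage from $\sheafCtr$ to $\sheafC$ (already handled by the preceding proposition in this section) and from $\sheafDtr$ to $\sheafD$ is compatible with the identification, but since those equivalences are already in hand, the only genuinely new content here is matching $\sheafC$ with $\Cob$ itself; everything else is citation of $\cite{GMTW}$ and $\cite{M-W}$ with ``transverse'' replaced by ``$\brac$-transverse'' and ``submanifold'' by ``neat $\brac$-submanifold''.
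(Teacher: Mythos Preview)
Your overall strategy---identify $\sheafC$ as the ``smooth family version'' of $\Cob$ and invoke the Madsen--Weiss/GMTW comparison principle---is correct and matches the paper's approach in spirit. However, the mechanism you anticipate is heavier than what is actually used, and in one respect mistaken. You expect a partition-of-unity/local-triviality argument that relies on the weak contractibility of the embedding spaces (Theorem~\ref{mainthm2}) in order to ``forget the embedding''; but in the definition of $\Cob$ the objects and morphisms are already \emph{embedded} $\brac$-submanifolds of $\mathbb{R}^k_+\times\mathbb{R}^\infty$, so there is no forgetting step and no appeal to Theorem~\ref{mainthm2} in this proposition. The paper's proof is correspondingly more direct: one shows levelwise that $N_p\Cob\simeq N_p|\sheafC|$ by (i) invoking Milnor's theorem that any space is weakly equivalent to the realization of its singular set, (ii) replacing the singular set by the \emph{smooth} singular set via smoothing theory, and (iii) observing that a smooth map $X\to N_p\Cob$ is literally the same data as an element of $N_p\sheafC(X)$, via the graph construction $u\mapsto(\text{graph}(u_1),\,g\circ\pi_2)$ and its inverse $x\mapsto(\pi^{-1}(x),\,g|_{\pi^{-1}(x)})$. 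What your outline buys is a cleaner packaging as a citation of a general principle; what the paper's argument buys is that it makes the bijection completely explicit and avoids importing any machinery (in particular no local-triviality input) beyond Milnor's theorem.
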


\begin{proof}It suffices to show that for each $p\;{\geq}\;0$, the spaces $N_p\Cob$ and $N_p|\sheafC|$ are weak homotopy equivalent.  By a theorem of Milnor $\cite{Milnor}$, any space is weakly homotopic to the geometric realization of its singular set, so
$$N_p\Cob\;\;{\simeq}\;\;|\;[l]{\mapsto}C({\Delta}^l,N_p{\Cob})\;|.$$
Now we argue that the singular set on the right is equivalent to a smooth singular set; we outline what we mean by smooth.  Recall from definition \ref{defncobobs} that $Y(M)$ is a subset of $$\text{Emb}_{\brac}(M,\mathbb{R}^k_+\times\mathbb{R}^{d+n}){\times}C_{\brac}(M,\struc)$$
so there is a projection map $\text{proj}:\frac{Y(M)}{\text{Diff}_{\brac(M)}}\;\longrightarrow\;\frac{\text{Emb}_{\brac}(M,\mathbb{R}^k_+\times\mathbb{R}^{\infty})}{\text{Diff}_{\brac(M)}}$.   For a smooth manifold $X$, we declare $u:X\;{\longrightarrow}\;\text{ob}\Cob$ to be smooth if the 
projections onto $\frac{\text{Emb}_{\brac}(M,\mathbb{R}^k_+\times\mathbb{R}^{\infty})}{\text{Diff}_{\brac(M)}}$ and $\mathbb{R}$ are smooth.  By smoothing theory, we see that $C^{\infty}({\Delta}^l_e,N_{p}\Cob)$ is weakly homotopic to $C({\Delta}^l_e,N_{p}\Cob)$ in the case $p$ equals 0.  The argument for higher $p$ is identical.  Thus
 
$$|\;[l]{\mapsto}C({\Delta}^l,N_p{\Cob})\;|\;\;{\simeq}\;\;|\;[l]{\mapsto}C^{\infty}({\Delta}^l_e,N_p{\Cob})\;|$$

Recall that the object space for $\Cob$ is $$\underset{[M]\in\mathcal{S}}{\coprod}\;\frac{Y(M)}{\text{Diff}_{\brac}(M)}{\times}\mathbb{R}.$$
Now suppose  $X$ is a manifold without boundary and $$u=u_1{\times}u_2:X\rightarrow\underset{[M]\in\mathcal{S}}{\coprod}\;\frac{Y(M)}{\text{Diff}_{\brac}(M)}{\times}\mathbb{R}.$$ is a smooth map.  Then $(\;\text{graph}(u_1)\;,\;g{\circ}\pi{_2}\;)$ is an element of $\text{ob}\sheafC(X)$ (where $\pi{_{2}}$ is projection on the graph of $u_1$).
Conversely, given an object $(W,g)\;{\in}\;\text{ob}\sheafC(X)$ we obtain a smooth map $X\;{\longrightarrow}\;\text{ob}\Cob$ by sending $x\;{\in}\;X$
to $(\;{\pi}^{-1}(x)\;,\;g{_{|{\pi}^{-1}(x)}}\;)$.  Thus we've shown that
$$C^{\infty}(X,N_{p}\Cob)\;{\approxeq}\;N_p\sheafC(X)$$
when $p$ equals $0$.  The cases $p>0$ are similar.  Setting $X={\Delta}^l_e$ yields
$$|\;[l]{\mapsto}C^{\infty}({\Delta}^l_e,N_p{\Cob})\;|\;\;{\simeq}\;\;|N_p\sheafC|$$
As each of the three equivalences is simplicial with respect to the $p$ variable, we may string them together to obtain 
$$B\Cob\;\;{\simeq}\;\;|\;[p]\mapsto|N_p\sheafC|\;|=B|\sheafC|.$$
\end{proof}

For the reader's convenience here is a summary of the zig-zag of weak homotopy equivalences that combine to yield the theorem.

$$B\Cob\overset{(1)}{\rightarrow}{B|\sheafC|}\overset{(2)}{\rightarrow}B|{\sheafCtr}|\overset{(3)}{\leftarrow}{B|\sheafDtr|}\overset{(4)}{\rightarrow}|{\beta}\sheafDtr|\overset{(5)}{\rightarrow}{|\sheafD|}\overset{(6)}{\rightarrow}\kzerospec.$$
Informally, the reader should think of elements of $\sheafD$ as being parametrized $d+k$ dimensional $\brac$-manifolds, $W$, along with a proper map to $\mathbb{R}$ (the ``cobordism direction").
Going from right-to-left we convert these elements until they are parametrized morphisms of $\Cob$.  To that end, in (5) we added in the extra information of a parametrized beginning and parametrized end slice which $W$ meets transversally.  Then we discarded the rest of the manifold outside the slices in (3), and
finally with (2) we insisted that $W$ meets the boundary slices not just transversely, but perpendicular to the $\mathbb{R}$ ``cobordism" direction.  To prove maps (2)-(5) are weak homotopy equivalences we made significant use of the $\textbf{Relative Surjectivity Criterion}$.  That left the maps (1) and (6).  The first map was loosely a Yoneda embedding.  The last map is based on the Pontrjagin-Thom construction.  That it is an equivalence is to be shown.\\ 

\begin{prop}$|\sheafD|\;\;{\simeq}\;\;\kzerospec$
\end{prop}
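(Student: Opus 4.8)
The plan is to prove that $|\sheafD|$ is weakly homotopy equivalent to $\kzerospec$ by exhibiting a natural bijection $\sheafD[X] \approxeq [X, \kzerospec]$ for all manifolds $X$ without boundary, and then invoking Proposition \ref{sheafclass} together with Milnor's theorem that a weak equivalence is detected on all spheres. Concretely, I would first establish the bijection stated as the earlier unproved Proposition (``There is a natural bijection between $[X,\kzerospec]$ and $\sheafD[X]$'') — indeed the statement $|\sheafD|\simeq\kzerospec$ is essentially a repackaging of that Proposition via $\mathcal{F}[X]\approxeq[X,|\mathcal{F}|]$. So the real content is the Pontrjagin--Thom style argument identifying concordance classes of $\sheafD(X)$ with homotopy classes of maps into the zero space of the $\brac$-$\Omega$-spectrum ${\Omega}^{{\infty}-1}_{\brac}\thom\brac$.

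The key steps, in order, are as follows. First I would unwind the definition: an element of $\sheafDn(X)$ is a neat $\brac$-submanifold $W \subset X \times \mathbb{R} \times \mathbb{R}^k_+ \times \mathbb{R}^{d-1+n}$ with $(\pi,f)$ proper, $\pi$ a submersion with $d+k$-dimensional $\brac$-fibers, plus a $\brac$-structure map $g$ covering the fiberwise tangent classifying map. Second, given such $(W,g)$, I would perform the \emph{fiberwise} Pontrjagin--Thom collapse: for each $\twokob$ the fiber direction $\mathbb{R}^k_+(a) \times \mathbb{R}^{d-1+n} \times \mathbb{R}$ (note the extra $\mathbb{R}$ from $f$, which accounts for the $\infty-1$ shift) collapses onto the one-point compactification of a tubular neighborhood of $W(a)$, which by the tubular neighborhood theorem for $\brac$-manifolds (appendix \ref{appendix2}) is identified with the pullback of ${\theta}^*{\gamma}^{\perp}(d-1,n)(a)$; pre-composing with $g(a)$ gives a map to $B^{{\theta}^*{\gamma}^{\perp}(d-1,n)}(a)$. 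These assemble into a $\brac$-map, i.e.\ an element of ${\Omega}^{N-1}_{\brac}\thomN\brac$ parametrized by $X$, and passing to the colimit over $n$ yields a map $X \to \kzerospec$. Third, I would check this construction is well-defined on concordance classes (a concordance in $\sheafD(X\times\mathbb{R})$ produces a homotopy) and construct the inverse by $\brac$-transversality: given $X \to \kzerospec$, represented at finite stage by a $\brac$-map to $\thomN\brac$, perturb by a $\brac$-homotopy (appendix \ref{appendix2}) to be transverse to the zero section $G(d-1,n)\brac$ and take the preimage, which is a neat $\brac$-submanifold; the fibration property of ${\theta}_N$ then lifts the classifying map to a structure map $g$, exactly as in the proof of the Pontrjagin--Thom theorem (\cite{Laures}) reproduced earlier in the excerpt. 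Finally, I would verify the two constructions are mutually inverse up to concordance, using that two transverse perturbations of homotopic maps are connected by a transverse homotopy, hence yield concordant $\brac$-manifolds.

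The main obstacle I expect is the careful bookkeeping of the degree/suspension shift and of the properness condition in the $\brac$-setting. The $\infty - 1$ in $\kzerospec$ is not decoration: it records that $\sheafD$ parametrizes $(d+k)$-dimensional total fibers mapping properly to $\mathbb{R}$, so the collapse map lives one loop-degree below what a naive count would suggest, and one must be scrupulous that the $\brac$-suspension isomorphisms $\Sigma\,\text{hocolim}\,{X_N}_* \approxeq \text{hocolim}\,{\Sigma X_N}_*$ from the discussion preceding Proposition \ref{PuppeLaures} are compatible with the fiberwise collapse across all vertices $a \in \underline{2}^k$ simultaneously. A secondary subtlety is that the collapse and transversality arguments must respect the \emph{neatness} and perpendicularity conditions at every stratum $W(a)$ and every face inclusion $W(a) \hookrightarrow W(b)$ — this is where the $\brac$-versions of the tubular neighborhood theorem and of Thom transversality in the appendix are indispensable, and where one uses that the relevant $\brac$-vector bundles are geometric so their Thom spaces behave functorially under the face maps. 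Once these are in hand the argument is a routine, if lengthy, transcription of \cite{GMTW} Section 3 / \cite{M-W}, so I would state it at that level of detail and refer to the appendix for the differential-topological inputs.

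\begin{proof}
By Proposition \ref{sheafclass} it suffices to exhibit a natural bijection $\sheafD[X]\approxeq[X,\kzerospec]$ for every manifold $X$ without boundary, which is the content of the Proposition stated (and whose proof was postponed) above; we give the argument now. Given $(W,g)\in\sheafDn(X)$ with projections $\pi,f$ onto $X$ and $\mathbb{R}$, the pair $(\pi,f)$ is proper, so for each $\twokob$ a fiberwise Pontrjagin--Thom collapse along the directions $\mathbb{R}\times\mathbb{R}^k_+(a)\times\mathbb{R}^{d-1+n}$ onto the one-point compactification of a $\brac$-tubular neighborhood of $W(a)$ is defined; by the tubular neighborhood theorem for $\brac$-manifolds (appendix \ref{appendix2}) this neighborhood is fiber-homeomorphic to the pullback along $T^\pi W_N(a)$ of ${\gamma}^{\perp}(d-1,n)(a)$, and composing with $g(a)$ yields a map to $B^{{\theta}^*{\gamma}^{\perp}(d-1,n)}(a)$. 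These assemble, over all $\twokob$ and compatibly with the face maps $W(a)\hookrightarrow W(b)$ (using that ${\gamma}^{\perp}(d,n)\brac$ is geometric), into a parametrized $\brac$-map $X\to{\Omega}^{N-1}_{\brac}\thomN\brac$; the extra $\mathbb{R}$-direction coming from $f$ accounts for the $\infty-1$ degree shift. Passing to $\text{colim}_{n\to\infty}$ gives a map $X\to\kzerospec$, well-defined up to homotopy, and a concordance in $\sheafD(X\times\mathbb{R})$ produces a homotopy, so we obtain a well-defined map $\sheafD[X]\to[X,\kzerospec]$.

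For the inverse, represent a class in $[X,\kzerospec]$ at a finite stage by a $\brac$-map $X\to\thomN\brac$, that is by a parametrized element of ${\Omega}^{N-1}_{\brac}\thomN\brac$. Composing with ${\theta}_N$ and applying $\brac$-Thom transversality (appendix \ref{appendix2}) we perturb, by a $\brac$-homotopy, to a map transverse to the zero section $G(d-1,n)\brac$; its preimage $W$ is a neat $(d-1+k)$-dimensional $\brac$-submanifold, parametrized over $X$ with proper projection to $\mathbb{R}$, exactly as in the proof of the Pontrjagin--Thom theorem (\cite{Laures}) reproduced above. Since ${\theta}_N$ is a fibration, the perturbed map together with the transversality homotopy lifts to a structure map $g:W\to\struc$ with ${\strucmapN}g=T^\pi W_N$, giving an element of $\sheafDn(X)$. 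Two transverse perturbations of homotopic maps are joined by a transverse homotopy, hence produce concordant $\brac$-submanifolds, so this descends to a map $[X,\kzerospec]\to\sheafD[X]$.

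The two constructions are mutually inverse: starting from $(W,g)$, collapsing, and then taking the transverse preimage recovers $W$ up to $\brac$-diffeomorphism over $X$ and $g$ up to homotopy through structure maps, by the uniqueness of $\brac$-tubular neighborhoods; conversely, transversely extracting $W$ and then collapsing recovers the original $\brac$-map up to $\brac$-homotopy, because a collapse map of a tubular neighborhood is homotopic to the identity after the standard scaling. Hence $\sheafD[X]\approxeq[X,\kzerospec]$ naturally in $X$, and combining this with $[X,|\sheafD|]\approxeq\sheafD[X]$ from Proposition \ref{sheafclass} and letting $X$ range over spheres gives the weak homotopy equivalence $|\sheafD|\simeq\kzerospec$.
\end{proof}
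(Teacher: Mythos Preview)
Your overall strategy --- reduce to a natural bijection $\sheafD[X]\approxeq[X,\kzerospec]$ via Proposition~\ref{sheafclass} and then build that bijection by a parametrized Pontrjagin--Thom construction --- is exactly the paper's. But both halves of your bijection, as written, have genuine gaps tied to the $\infty-1$ shift, and the second gap is serious.

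In the direction $\sheafD[X]\to[X,\kzerospec]$ you collapse $W$ fiberwise in all of $\mathbb{R}\times\mathbb{R}^k_+\times\mathbb{R}^{d-1+n}$. But only $(\pi,f)$ is proper, not $\pi$, so the fibers $\pi^{-1}(x)$ need not be compact and the collapse in the $\mathbb{R}$-direction is not well-defined; and even granting that, counting dimensions shows the result would land in $\Omega^{N}_{\brac}\thomN$ rather than $\Omega^{N-1}_{\brac}\thomN$. The paper instead uses appendix~\ref{appendix1} to choose a $\brac$-regular value $c$ of $f$, sets $M:=f^{-1}(c)$ (which is compact and $(d-1+k)$-dimensional), and collapses $(\mathbb{R}^k_+\times\mathbb{R}^{d-1+n})^c$ onto a tubular neighborhood of $M$ --- this is what produces the $\infty-1$. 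In the direction $[X,\kzerospec]\to\sheafD[X]$ your transverse preimage $V\subset X\times\mathbb{R}^k_+\times\mathbb{R}^{d-1+n}$ has $(d-1+k)$-dimensional fibers and no $\mathbb{R}$-factor for $f$, so it is not an element of $\sheafD(X)$; more fundamentally, nothing in the transversality argument forces $\pi:V\to X$ to be a \emph{submersion}, which is part of the definition of $\sheafD$. The paper's essential extra step is: set $W:=V\times\mathbb{R}$, carry out the $\brac$-bundle calculations (using appendix~\ref{appendix5}) to produce a stable bundle surjection $TW\to\pi^*TX$, and then invoke the $\brac$-Phillips Submersion Theorem (appendix~\ref{appendix4}) to homotope this to an honest $\brac$-submersion, followed by a re-embedding. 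This Phillips step is the heart of the argument (as in \cite{GMTW}) and cannot be elided.
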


\begin{proof}From proposition \ref{sheafclass} it suffices to construct a bijection $\rho$ and its inverse $\sigma$ between relative concordance classes $\sheafD[X,A;s_0]$ and $[X,A;\kzerospec,s_0']$
for any closed manifold $X$.  Thus, by allowing our choice of $X$ to range over spheres of arbitrary dimension, and letting $A$ be a point we obtain our result.  The relative version follows along lines similar to the absolute case, which we now present. \\

We construct $\sigma$ first.  Let $X$ be a closed manifold and let us pick a map $h$ represeneting the class $[h]\;{\in}\;[X,\kzerospec]$.  As $X$ is compact, it lifts to a map

$$h:X\;\longrightarrow\;\Omega^{d-1+n}_{\brac}{\thomN}$$
for some $N>0$.  Its adjoint map
$$h_{ad}:X_+{\wedge}(\keucNless)^c{\rightarrow}\thomN.$$\\
is a $\brac$-map.  After removing the point at infinity from $X_+{\wedge}(\keucNless)$ we are left a $\brac$-space which we identify with $$X{\times}\keucNless.$$  
Because the inverse image under $h$ of the total space ${\theta}_N^*{\gamma}^{\perp}(d,n)\brac=\thomN-*$ is an open subspace
of $X{\times}\keucNless$ and $X{\times}\keucNless$ is a $\brac$-manifold, $h^{-1}({\theta}_N{\gamma}^{\perp}(d,n))$ is also a $\brac$-manifold.  Also, $h$ restricts to a map

$$h:\;\;h^{-1}({\theta}_N^*{\gamma}^{\perp}(d,n))\;\longrightarrow\;{\theta}_N^*{\gamma}^{\perp}(d,n).$$

We are now in a situation where we may apply appendix \ref{appendix2}.  Thus, ${\theta}_Nh$ may be perturbed under a homotopy $l$ to a smooth map ${h}_0$ of $\brac$-manifolds transverse to $G(d,n)\brac$.  Set
$$V:={h}_0^{-1}(M)$$
which is a codimension n $\brac$-submanifold of $X{\times}\keucNless$ which up to concordance is a neat $\brac$-submanifold.  Let ${\pi}$ be the projection from $V$ onto $X$ and $\iota$ the inclusion map of $V$ into $X{\times{\keucNless}}$.

By construction, the normal bundle $\nu$ of $V$ in $X{\times{\keucNless}}$
is ${h}_0^*{\gamma}^{\perp}(d,n)\brac$.  Set
$$T^{\pi}X:={h}_0^*{\gamma}^{\perp}(d,n)\brac.$$
Then we have the following $\brac$-bundle isomorphism:
$$\nu{\oplus}T^{\pi}X\;\;=\;\;{h}_0^*\geoperpcanon{\oplus}{h}_0^*\geocanon\;\;{\simeq}\;\;{h}_0^*{\epsilon}^{d+n}\brac.$$
On the otherhand from the fact that $V$ is a submanifold of $X{\times{\keucNless}}$ we also have the $\brac$-bundle isomorphism:
$$TV{\oplus}\nu\;\simeq\;i^*T(X{\times}\keucNless)\;\simeq\;i^*TX{\oplus}i^*T(\keucNless)\;=\;i^*TX{\oplus}{\epsilon}^{d-1+n}\brac.$$
Piecing these two lines together gives an isomorphism
$$TV\brac{\oplus}{\epsilon}^{d+n}\brac\;\simeq\;TV{\oplus}\nu{\oplus}T^{\pi}X\;{\simeq}\;\;i^*TX{\oplus}{\epsilon}^{d-1+n}\brac{\oplus}T^{\pi}X\brac.$$
By appendix \ref{appendix5}, this isomorphism is induced up to $\brac$-homotopy by a unique $\brac$ isomorphism $${\Phi}:TV\brac{\oplus}{\epsilon}\;\overset{\simeq}{\longrightarrow}\;{\pi}^*TX{\oplus}T^{\pi}X\brac.$$\\
Now set 
$$W:=V{\times}\mathbb{R},$$
let $p:W\to{V}$ be projection, and consider
$${\pi}^*\circ{\Phi}{\circ}p^*:TW=p^*TV{\oplus}\epsilon\;\longrightarrow\;p^*{\pi}^*TX{\oplus}p^*T^{\pi}X\;\longrightarrow\;TX$$
which is evidently a bundle surjection and induced by projection onto $X$.  By the Philips Submersion Theorem (appendix \ref{appendix4}), ${\pi}^*\circ{\Phi}{\circ}p^*$ is homotopic to a $\brac$-submersion $s$ through some homotopy $s_t$.  Unfortunately, $s$ is now no longer the same map as the projection of $W$ onto $X$.   To remedy this, pick an immersion $e'$ be of $W$ into $\mathbb{R}^{n'}$, and $\phi:I\to{I}$ a smooth monotone increasing function that is zero in a neighborhood of zero and one in a neighborhood of one.  Then
$$\text{id}_W{\times}\phi(t)e':W{\times}I\;\longrightarrow\;X{\times}\mathbb{R}{\times}\mathbb{R}^{d-1+n+n'}$$
is a $\brac$-isotopy from the identity embedding of $W$ to an embedding $id_W{\times}e_1$ where $e_1$ is also a $\brac$-embedding.  Follow this isotopy by $s_t{\times}e_1$.  Then 
$$W_0:=s_1{\times}e_1(W)$$
 is a $\brac$-submanifold of $X{\times}\mathbb{R}{\times}\mathbb{R}^{d-1+n+n'}$ such that projection onto $X$ is a submersion.  The reader should notice that we may assume the projection $f$ of $W_0$ onto the first $\mathbb{R}$ factor is proper.\\

To produce the map of tangential data $g:W_0{\to}{\theta}_N^*{\gamma}^{\perp}(d,n)\brac$.  Since $\theta_N$ is a fibration, it suffices to find a map $\overline{g}:W_0{\to}{\theta}_N^*{\gamma}^{\perp}(d,n)\brac$ and a homotopy from ${\theta}_N\overline{g}$ to $T^{\pi}W_0$.  The map $\overline{g}$ is provided by the composition 
$$W_0\;\overset{s_1{\times}e_1}{\longrightarrow}\;W\;\overset{p}{\longrightarrow}\;V\;\overset{h_{|V}}{\longrightarrow}\;{\theta}_N^*{\gamma}^{\perp}(d,n)\brac.$$
Recall that $h$ is the map from the very beginning of our construction, and we have chosen a homotopy $l$ from ${\theta}_Nh$ to $h_0$.  Also note that $h_0{\circ}p$ is precisely the map classifying the normal bundle of $W$ in $X{\times}\mathbb{R}{\times}\keucNless$ and homotopy $s$ and the $\brac$-isotopy from the previous paragraph provide a homotopy from $h_0{\circ}p$ to $T^{\pi}W_0$ to $T^{\pi}W_0$.\\

  Now we construct $\rho$.  Suppose we are given $(W,g)\;{\in}\;\sheafD[X]$.  Recall that $W$ is a $\brac$-submanifold of $X{\times}\mathbb{R}{\times}\keucNless$.  By appendix \ref{appendix1} there is a regular value $c\;{\in}\;\mathbb{R}$ for the projection map $f:W\;\rightarrow\;\mathbb{R}$.  Set 
$$M:=f^{-1}(c).$$
$M$ is a $\brac$-submanifold of $X{\times}\{c\}{\times}\keucNless$.  The normal bundle $\mu$ of $M$ in $X{\times}\{a\}{\times}\keucNless$ is a $\brac$-vector bundle and for any $\twokob$ the Thom collapse map produces a map 
$$X_+\;{\wedge}\;(\mathbb{R}^k(a){\times}\mathbb{R}^{d-1+n})^c\;\longrightarrow\;M^{\mu}(a).$$
These maps assemble to form a $\brac$-map
$$X_+\;{\wedge}\;(\mathbb{R}^k_+{\times}\mathbb{R}^{d-1+n})^c\;\longrightarrow\;M^{\mu}\brac.$$

After picking a metric on $X$, the normal bundle $\omega$ of $W{\subset}X{\times}\mathbb{R}{\times}\keucNless$  fits in the following short exact sequence of vector bundles
$$
0\;\longrightarrow\;TW\overset{\iota}{\longrightarrow}\;T(X{\times}\mathbb{R}{\times}\keucNless)\;\longrightarrow\;\omega\;\longrightarrow\;0
$$
which is canonically isomorphic to 
$$
0\;\longrightarrow\;TW\;\overset{\iota}{\longrightarrow}\;{\pi}_1^*TX\;{\oplus}\;{\pi}_2^*T(\mathbb{R}{\times}\keucNless)\;\longrightarrow\;\omega\;\longrightarrow\;0
$$
Since ${\pi}^*:TW\;\rightarrow\;TX$ is a bundle surjection, and $\pi$ is nothing more than ${\pi}_1{\circ}\iota$ the sequence is isomorphic to
$$
0\;\longrightarrow\;\text{ker}{\pi}^*\;\overset{{\pi}_2{\iota}^*}{\longrightarrow}\;T(\mathbb{R}{\times}\keucNless)\;\longrightarrow\;\omega\;\longrightarrow\;0$$
This identifies the normal bundle $\omega$ as the total space of the bundle ${{\pi}_2\iota}^*{\gamma}^{\perp}(d,n)\brac$.  The product of this map with $g$ yields a $\brac$-map 
$$G:\omega\;\longrightarrow\;{\theta}_N^*\gamma^{\perp}(d,n)\brac$$
An argument identical to the one above shows that the normal bundle $\omega$ restricted to $M$ is isomorphic to  $\mu$ (replace $X$ and $\pi$ with $\mathbb{R}$ and $f$).  This yields the following sequence of maps
$$
\mu\;\overset{\simeq}{\longrightarrow}\;{\omega}_{|M}\;\overset{i}{\longrightarrow}\;{\omega}\;\overset{G}{\longrightarrow}\;{\theta}_N^*\gamma^{\perp}(d,n)\brac
$$
The composition of this map induces a map on the Thom space
$$M^{\mu}\;\longrightarrow\;\thomN\brac.$$
Precomposing this map with the Thom collapse map and taking the adjoint with respect to $X$ produces a map
$$X\;\rightarrow\;\Omega^{d-1+n}_{\brac}B^{{\theta}_N^*{\gamma}^{\perp}(d,n)}.$$

   A check shows that $\rho$ and $\sigma$ are homotopy inverse to each other. \end{proof}

\section{Applications}

$\textbf{Unoriented and Oriented }<\textbf{k}>\textbf{-manifolds}$
\begin{prop}\label{applic1}$B\text{Cob}_{d,\brac}\;\;{\simeq}\;\;{\Omega}^{{\infty}-1}\underline{X}\textit{ where }\underline{X}\text{ is }$\\
$\textit{the following suspension spectrum: }$\\
$$BO(d)^{-{\gamma}_d}\textit{ when }k=0$$
$${\Sigma}^{\infty}BO(d+1)_+\textit{ when }k=1$$
$${\Sigma}^{\infty}(\underset{j=1}{\overset{k}{\bigvee}}\overset{{k\text{-}1}\choose{j\text{-}1}}{\bigvee}BO(d+j)_+)\textit{ when }k\;{\geq}\;2$$\end{prop}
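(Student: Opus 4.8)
The plan is to read the proposition off the Main Corollary in the untwisted case and then evaluate the resulting homotopy colimit by an induction on $k$. First I would apply the Main Corollary to the trivial structure $\theta=\mathrm{id}\colon BO(d)\to BO(d)$ (for which a $\theta$-structure on a manifold is no data at all, so $\text{Cob}_{d,\brac}=\text{Cob}_{d,\brac}^{\mathrm{id}}$ and $B^{-\theta^*\gamma_d}\brac=\plainthom\brac$). This yields $B\text{Cob}_{d,\brac}\simeq\Omega^{\infty-1}P(d,k)$, where $P(d,k):=\underset{\underline 2^k_*}{\mathrm{hocolim}}\;\plainthom\brac_*$, so the whole problem is to identify the spectrum $P(d,k)$.

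Since $\underline 2^k_*$ is $\underline 2^k$ with one new object $*$ receiving a morphism from every vertex except the terminal vertex $1$, and since $1$ is final in $\underline 2^k$ (so $\mathrm{hocolim}_{\underline 2^k}$ is evaluation at $1$), the homotopy colimit $P(d,k)$ is the total homotopy cofiber of the $k$-cube $\plainthom\brac$; this is essentially one of the lemmas of Laures recalled in Proposition \ref{PuppeLaures}. By the remark following the definition of $\plainthom\brac$, the vertex of this cube at $a$ is the Madsen--Tillmann spectrum $\Sigma^{|a|-k}MTO(d+|a|)$, where $MTO(m)=BO(m)^{-\gamma_m}$, and its edges are the tower stabilization maps $\Sigma^{-1}MTO(m)\to MTO(m+1)$. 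Decomposing the cube along its last coordinate into the two $\langle k-1\rangle$-cubes $\partial_k\plainthom\brac$ and $\overline{\partial_k}\plainthom\brac$ gives a cofibration sequence $\mathrm{tcofib}(\partial_k\plainthom\brac)\to\mathrm{tcofib}(\overline{\partial_k}\plainthom\brac)\to P(d,k)$, and directly from the definition of $\gamma^\perp(d,n)$ one checks $\partial_k\plainthom\brac\cong\Sigma^{-1}\plainthom\langle k-1\rangle$ and $\overline{\partial_k}\plainthom\brac\cong G^{-\gamma_{d+1}}\langle k-1\rangle$, so this reads
$$\Sigma^{-1}P(d,k-1)\;\longrightarrow\;P(d+1,k-1)\;\longrightarrow\;P(d,k).$$
For $k=0$, $P(d,0)=MTO(d)=BO(d)^{-\gamma_d}$. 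For $k=1$, $P(d,1)=\mathrm{cofib}\!\left(\Sigma^{-1}MTO(d)\to MTO(d+1)\right)\simeq\Sigma^\infty BO(d+1)_+$ by the fundamental cofibration sequence $\Sigma^{-1}MTO(d)\to MTO(d+1)\to\Sigma^\infty BO(d+1)_+$ (the sphere--disc bundle sequence of $\gamma_{d+1}\to BO(d+1)$ twisted by $-\gamma_{d+1}$, using $\gamma_{d+1}\oplus\gamma_{d+1}^\perp$ trivial; cf. \cite{GMTW}). These are the asserted answers for $k=0,1$.

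For $k\geq 2$ I would prove, by induction on $k$, that $P(d,k)\simeq\Sigma^\infty\big(\bigvee_{j=1}^{k}\bigvee^{\binom{k-1}{j-1}}BO(d+j)_+\big)$ for all $d$, with base case $k=1$ above. Assuming the statement for $k-1\geq 1$, the outer terms of the cofibration sequence become $\Sigma^{-1}P(d,k-1)=\Sigma^{-1}\Sigma^\infty\big(\bigvee_{j=1}^{k-1}\bigvee^{\binom{k-2}{j-1}}BO(d+j)_+\big)$ and $P(d+1,k-1)=\Sigma^\infty\big(\bigvee_{j=2}^{k}\bigvee^{\binom{k-2}{j-2}}BO(d+j)_+\big)$. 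The crux is that the connecting map $\Sigma^{-1}P(d,k-1)\to P(d+1,k-1)$ is null--homotopic for $k\geq 2$. Granting this, $P(d,k)\simeq P(d+1,k-1)\vee\Sigma\big(\Sigma^{-1}P(d,k-1)\big)=P(d+1,k-1)\vee P(d,k-1)$, and collecting the copies of $BO(d+j)_+$ via Pascal's identity $\binom{k-2}{j-2}+\binom{k-2}{j-1}=\binom{k-1}{j-1}$ produces exactly the claimed wedge, closing the induction.

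The main obstacle is therefore the null--homotopy of the connecting map for $k\geq 2$, and the plan is to establish it by tracing that map through the fundamental cofibration sequences. The map is induced by the natural transformation $\partial_k\plainthom\brac\Rightarrow\overline{\partial_k}\plainthom\brac$, each of whose components is a tower map $\Sigma^{-1}MTO(m)\to MTO(m+1)$; composing such a component with the next map $MTO(m+1)\to\Sigma^\infty BO(m+1)_+$ of its cofibration sequence is a composite of two consecutive maps in a cofibration sequence, hence null. When $k\geq 2$ the source $\Sigma^{-1}P(d,k-1)$ is itself the total cofiber of a nondegenerate sub--cube rather than a single spectrum (which is precisely why the argument does not — and should not — apply when $k=1$), and an octahedral diagram chase upgrades these vertexwise nullities to a null--homotopy of the assembled map on total cofibers. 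I would also verify $k=2$ by hand, computing the homotopy pushout of $\Sigma^{-1}MTO(d+1)\xleftarrow{\;\Sigma^{-1}\lambda_d\;}\Sigma^{-2}MTO(d)\xrightarrow{\;\Sigma^{-1}\lambda_d\;}\Sigma^{-1}MTO(d+1)$ and its induced map to $MTO(d+2)$, which splits off $\Sigma^\infty BO(d+1)_+$ explicitly. As a fallback I would keep the Thom--space model available: the $N$-th space of $P(d,k)$ is the relative Thom space $\mathrm{Th}(\gamma^\perp_{d+k,n})\big/\mathrm{Th}\big(\gamma^\perp_{d+k,n}|_A\big)$ over $G(d+k,n)$, where $A=\{\,P:e_i\in P\text{ for some }1\leq i\leq k\,\}$, and filtering $A$ by the sub--Grassmannians of planes containing a prescribed coordinate subspace exhibits the wedge decomposition directly.
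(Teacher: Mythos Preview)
Your overall architecture matches the paper's proof exactly: apply the Main Corollary with $\theta=\mathrm{id}$, identify $P(d,k)$ with the total cofiber of the $k$-cube $\plainthom\brac$, split off the last coordinate to get the cofibration $\Sigma^{-1}P(d,k-1)\to P(d+1,k-1)\to P(d,k)$, feed in the known answers for $k=0,1$, and close the induction with Pascal's identity once you know the connecting map is null for $k\ge 2$. All of that is right and is precisely what the paper does.

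The gap is in your argument for the null-homotopy. You observe that each component $\Sigma^{-1}MTO(m)\to MTO(m+1)$ of the natural transformation becomes null after composing with $MTO(m+1)\to\Sigma^\infty BO(m+1)_+$, and then assert that an ``octahedral diagram chase'' promotes these vertexwise nullities to a null-homotopy of the induced map on total cofibers. That step is not justified, and in general it fails: a map of cubes can be null at every vertex (after composing into the target total cofiber) and still induce a non-null map on total cofibers, because the null-homotopies need not be coherent across the cube. Concretely, for $k=2$ your argument gives only that $\Sigma^{-1}MTO(d+1)\to\Sigma^{-1}\Sigma^\infty BO(d+1)_+\xrightarrow{\phi}\Sigma^\infty BO(d+2)_+$ is null, which merely says $\phi$ factors through the boundary map to $\Sigma^{-1}MTO(d)$; it does not force $\phi$ itself to be null. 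Your promised ``by hand'' verification for $k=2$ would have to confront exactly this point.

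The paper supplies the missing coherence by a geometric trick you do not mention: it uses the $SO(2)$-action rotating the last two coordinate directions $e_{k-1},e_k$ of $\mathbb{R}^k$. The one-parameter family $M_\theta$ of linear maps acts compatibly on \emph{all} vertices of the cube simultaneously, giving a single homotopy of the whole map $f\colon\smallhocothom\to\bighocothom$; at $\theta=1$ the stabilization in the $e_k$-direction has been rotated to a stabilization in the $e_{k-1}$-direction, which factors through a vertex other than the terminal one and hence lands in a contractible part of the target hocolim. This global homotopy is exactly the coherent null-homotopy that your vertexwise argument lacks, and it is the reason the proof needs $k\ge 2$ (you need a second coordinate direction to rotate into). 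Your Thom-space fallback, filtering the complement $A\subset G(d+k,n)$ by coordinate containment, is closer in spirit to something that could work, but as written it is only a sketch.
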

Let $\theta:G(d,n)^+\brac{\rightarrow}G(d,n)\brac$ be the structure map from the oriented $\brac$-Grassmanian to the $\brac$-Grassmanian which forgets orientation.
\begin{prop}\label{applic1'}$B\text{Cob}_{d,\brac}^+\;\;\simeq\;\;{\Omega}^{{\infty}-1}\underline{X}\textit{ where }\underline{X}\text{ is }$\\
$$BSO^{-\theta^*{\gamma}_d}\textit{ when }k=0$$
$${\Sigma}^{\infty}BSO(d+1)_+\textit{ when }k=1$$
$${\Sigma}^{\infty}(\underset{j=1}{\overset{k}{\bigvee}}\overset{{k\text{-}1}\choose{j\text{-}1}}{\bigvee}BSO(d+j)_+)\textit{ when }k\;{\geq}\;2$$\end{prop}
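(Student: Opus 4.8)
The plan is to run the proof of Proposition~\ref{applic1} with $BO(\,\cdot\,)$ and $MT(\,\cdot\,)$ replaced throughout by $BSO(\,\cdot\,)$ and $MTSO(\,\cdot\,)$; only two ingredients need separate verification: the vertexwise identification of the $\brac$-prespectrum $B^{-\theta^{*}\gamma_{d}}\brac$, and the oriented form of the cofiber sequence that powers the computation. First I would apply the Main Corollary to the orientation structure $\theta$, noting that each $\theta(a)\colon G(d,n)^{+}(a)\to G(d,n)(a)$ is the orientation double cover and hence a fibration, so that $B\text{Cob}_{d,\brac}^{+}\simeq\Omega^{\infty-1}\,\underset{\underline{2}^{k}_{*}}{\text{hocolim}}\,B^{-\theta^{*}\gamma_{d}}\brac_{*}$. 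Then I would identify this $\brac$-prespectrum vertex by vertex exactly as in Section~3 but with $\theta^{*}$ inserted: over $G(d,n)^{+}(a)\simeq BSO(d+|a|)$ the bundle $\theta^{*}\gamma^{\perp}(d,n)(a)$ is the rank $n$ orthogonal complement bundle, so $B^{-\theta^{*}\gamma_{d}}(a)\simeq\Sigma^{|a|-k}MTSO(d+|a|)$, and for $a<b$ the map $B^{-\theta^{*}\gamma_{d}}(a<b)$ is the appropriate suspension of the ``adjoin a coordinate axis'' Thom space map --- the oriented version of the structure maps of $\plainthom\brac$.

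The homotopy colimit is then purely formal. Since $*$ is terminal in the full subcategory of $\underline{2}^{k}_{*}$ on $(\underline{2}^{k}\setminus\{1\})\cup\{*\}$, while $1$ is terminal in $\underline{2}^{k}$, writing $\underline{2}^{k}_{*}$ as the pushout of these two subcategories along $\underline{2}^{k}\setminus\{1\}$ yields $\underset{\underline{2}^{k}_{*}}{\text{hocolim}}\,\underline{X}_{*}\simeq\text{cofib}\bigl(\underset{\underline{2}^{k}\setminus\{1\}}{\text{hocolim}}\,\underline{X}\to\underline{X}(1)\bigr)$, which is the total homotopy cofiber of the $k$-cube $a\mapsto\Sigma^{|a|-k}MTSO(d+|a|)$ of spectra; call it $T(d,k)$, so that $B\text{Cob}_{d,\brac}^{+}\simeq\Omega^{\infty-1}T(d,k)$. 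I would evaluate $T(d,k)$ by induction on $k$, peeling off the last cube coordinate (the $a_{k}=0$ face being a desuspension of the $(d,k-1)$-cube and the $a_{k}=1$ face the $(d+1,k-1)$-cube), which identifies $T(d,k)$ with $\text{cofib}\bigl(\Sigma^{-1}T(d,k-1)\to T(d+1,k-1)\bigr)$. The sole external input is the cofiber sequence $\Sigma^{-1}MTSO(m)\to MTSO(m+1)\to\Sigma^{\infty}BSO(m+1)_{+}$, the oriented analogue of the cofiber sequence used in the proof of Proposition~\ref{applic1}; it holds by the same argument, being realized at each finite stage by a cofibration of Thom spaces of bundles over the oriented Grassmannians. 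The cases $k=0,1$ give $T(d,0)=MTSO(d)=BSO(d)^{-\gamma_{d}}$ and $T(d,1)=\Sigma^{\infty}BSO(d+1)_{+}$; for $k\ge 2$ the connecting map in the recursion is null, so $T(d,k)\simeq T(d+1,k-1)\vee T(d,k-1)$, and $\binom{k-2}{j-2}+\binom{k-2}{j-1}=\binom{k-1}{j-1}$ assembles the stated wedge.

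The main obstacle is the vanishing of these connecting maps for $k\ge 2$ --- equivalently, the splitting of the iterated cofiber sequences into wedges. This is exactly the point already settled in the proof of Proposition~\ref{applic1}, where the splitting is deduced formally from the cofiber sequence together with the structure of $\Sigma^{\infty}BO(\,\cdot\,)_{+}$; since each ingredient there has a verbatim $SO$-analogue, the argument transfers without change. The only genuinely new verification is the oriented cofiber sequence displayed above, which is routine since orientations play no role in the Grassmannian-level construction of the unoriented one.
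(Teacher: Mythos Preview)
Your proposal is correct and follows exactly the paper's approach: the paper's proof of this proposition is literally the single remark that ``the arguments in both cases are identical; we demonstrate the computation in the former case,'' and your inductive decomposition $T(d,k)\simeq\text{cofib}\bigl(\Sigma^{-1}T(d,k-1)\to T(d+1,k-1)\bigr)$ is the paper's peel-off-the-last-coordinate step rewritten in total-cofiber language.

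One caution, since you wrote this without seeing the proof of Proposition~\ref{applic1}: you describe the vanishing of the connecting map there as ``deduced formally from the cofiber sequence together with the structure of $\Sigma^{\infty}BO(\,\cdot\,)_{+}$,'' but it is not formal.  The paper's argument is geometric: a one-parameter family of rotations $R_{\theta}\in SO(2)$ in the $(e_{k-1},e_{k})$-plane of $\mathbb{R}^{k}$ induces self-maps of the Grassmannian Thom spaces that homotope the connecting map off the image of the top vertex of the cube, hence into a contractible subspace of the target homotopy colimit.  This does transfer verbatim to the oriented Grassmannians precisely because rotations are orientation-preserving, so your conclusion is right --- but that is the specific reason the $SO$-analogue goes through, not an abstract ``every ingredient has an oriented version'' principle.
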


\begin{proof}
 
When $k=0$, there is nothing to prove, since the homotopy colimit is over the trivial category with one object.
When $k=1$, the cofiber of\\
 $${\Sigma}^{-1}BO(d)^{-{\gamma}_d}{\rightarrow}BO(d\text{+}1)^{-{\gamma}_{d+1}}$$ is known to be ${\Sigma}^{\infty}BO(d+1)_+$, and the cofiber of
 $${\Sigma}^{-1}BSO(d)^{-\theta^*{\gamma}_d}{\rightarrow}BSO(d\text{+}1)^{-\theta^*{\gamma}_{d+1}}$$ is known to be ${\Sigma}^{\infty}BSO(d+1)_+$.\\

It remains to evaluate the homotopy colimit when $k\;{\geq}\;2$ where the homotopy colimit is either $\underset{\underline{2}^k_*}{\text{hocolim}}\;\plainthom\brac_*$ in the case of propostion \ref{applic} or $\underset{\underline{2}^k_*}{\text{hocolim}}\;{G^+}^{-{\theta}^*{\gamma}_{d}}\brac_*$ for proposition \ref{applic'}.  The arguments in both cases are identical; we demonstrate the computation in the former case.  The reader may briefly want to glance at $\textbf{section 2}$ for the definition of ${\partial}_k$ and $\overline{{\partial}_k}$.\\

For any $\brac$-space $E\brac$, the homotopy colimit $\underset{\underline{2}^k_*}{\text{hocolim}}\;E_*$ is homotopic to the homotopy cofiber of the map
$${\smallhoco}\overset{f}{\longrightarrow}\bighoco$$
where $f$ is induced by the composition of maps
$$\xymatrix{
E(a)\ar[r]&E(a\text{+}e_k)\ar[r]&\bighoco
}
$$
\begin{lemma}When $E\;=\;\plainthom\brac$, $f$ is nullhomotopic.\end{lemma}
\begin{proof}
Note that for any $\brac$-space $E$, $\text{hocolim}_{\underline{2}^k_*}E_*-\text{im}E(1)$ deformation retracts onto $\text{hocolim}_{\underline{2}^k_*-1}E$ which is contractible ($\underline{2}^k_*-1$ is the full subcategory of $\underline{2}^k_*$ after we omit the object $1$.  It has an initial object).  Thus it suffices to show that when $E=\plainthom\brac$, $f$ is homotopic to a map which factors through $\bighocothom-\text{im}\plainthom(1)$.  We construct this homotopy as follows.
Let
$$R_{\theta}:=\begin{bmatrix}\text{cos}\frac{\pi}{2}\theta &\ -\text{sin}\frac{\pi}{2}\theta\\\text{sin}\frac{\pi}{2}\theta &\ \text{cos}\frac{\pi}{2}\theta\end{bmatrix}$$
and let

$$M_{\theta}:=\left[\array{c|c}I_{k-2} &\ 0\\\hline{0} &\ R_{\theta}
\endarray\right]$$
for $\theta\;{\in}\;[0,1]$ written with respect to the standard basis $(e_1,...,e_k)$.
Suppose $a\;{\in}\;{\partial}_k\underline{2}^{k-1}$ (i.e. is of the form $(...,0)$).  For each $\theta$, $M_{\theta}$ induces a map $M_{\theta}^*:\plainthom(a)\;{\rightarrow}\;\plainthom(a$+$e_k)$ and the following diagram commutes for all $a<b\;{\in}\;{\partial}_k\underline{2}^{k-1}$:
$$\xymatrix{
\plainthom(a)\ar[d]\ar[r]^{M_{\theta}^*}&\plainthom(a\text{+}e_k)\ar[d]\\
\plainthom(b)\ar[r]^{M_{\theta}^*}&\plainthom(b\text{+}e_k)
}$$
There are of course also the canonical homotopies
from $\plainthom(a+e_k)\rightarrow\bighocothom$ to the composition 
$$\plainthom(a\text{+}e_k)\;\;{\longrightarrow}\;\;\plainthom(b\text{+}e_k)\;\;\longrightarrow\;\;{\bighocothom}.$$
Gluing these triangles onto the previous squares yields a diagram commuting up to a prescribed homotopy:

$$\xymatrix{
\plainthom(a)\ar[d]\ar[dr]&\\
\plainthom(b)\ar[r]&\bighocothom
}
$$
for all $a<b\;{\in}\;{\partial}_k\underline{2}^{k-1}$.  From this we see that
$M_{\theta}$ actually induces a map
$$F_{\theta}:I{\times}\smallhocothom\;\;\longrightarrow\;\;\bighocothom.$$
When $\theta=0$, $M_0^*=\plainthom(a<a+e_k)$ and so $F_{0}=f$.  For $\theta=1$, after identifying $\plainthom(a)$ with $\plainthom(a')$ where $a'=a-e_{k-1}+e_k$, we see that $M_1^*=\plainthom(a'<a+e_k)$ and so $F_{1}$ factors through $\underset{{\underline{2}^k_*}}{\text{hocolim}}\plainthom-\text{im}\plainthom(1)$ as desired.\end{proof}

Applying the lemma gives
$$\underset{\underline{2}^k_*}{\text{hocolim}}{\plainthom}\;\simeq\;{\Sigma}\;\smallhocothom\;\vee\;\bighocothom$$
which after unraveling the definition of $\geoperpcanon$ we see is precisely
$${\simeq}\;{\Sigma}\;\underset{\underline{2}^{k-1}_*}{\text{hocolim}}\;{\Sigma}^{-1}G^{-{\gamma}_d}<\text{k-1}>_*\;\vee\;\underset{\underline{2}^{k-1}_*}{\text{hocolim}}\;G^{-{\gamma}_{d+1}}<\text{k-1}>_*.$$

By induction on $k$ we have already identified the homotopy type of the righthand side.  The result follows from repeated application of Pascal's Triangle identity.\end{proof}

$\textbf{2 - The restriction functors}$
Let $0\;{\leq}\;l<k$, $0\;{\leq}\;d$, and $c\;{\in}\;\underline{2}^k$ with $|c|=l$.  The functor $\underline{2}^l\overset{\simeq}{\rightarrow}\underline{2}^k(c)\hookrightarrow\underline{2}^k$ induces a restriction functor
$${\Psi}:\text{Cob}_{d,\brac}^{\theta}\;\longrightarrow\;{\text{Cob}}_{d,\langle{l}\rangle}^{\theta}$$
which sends $(W,g,a,b)$ to $(W(c),g(c),a,b)$.  We focus on the case when $k=1$ and $l=0$.  From the previous section we know that $\Psi$ induces a map on classifying spaces
$$B{\Psi}:{\Omega}^{\infty-1}{\Sigma}^{\infty}BO(d+1)_+\;\longrightarrow\;{\Omega}^{\infty-1}MT(d)$$

\begin{thm}
Let $k=1(l=0)$ and $\theta$ be the trivial or orientation fibration.  Then up to homotopy ${\Psi}$
 is induced by the spectrum map ${\Sigma}^{\infty}BO(d+1)\;\longrightarrow\;MT(d)$ which occurs in the cofibration sequence $\Sigma^{-1}MT(d)\;\rightarrow\;MT(d+1)\;\rightarrow\;{\Sigma}^{\infty}BO(d+1)$ described in $\cite{GMTW}$.
When $d=k=1$, this has been identified as the circle transfer.
\end{thm}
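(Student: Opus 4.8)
The plan is to trace the restriction functor $\Psi$ through the chain of weak equivalences proving the Main Theorem and then recognize the induced map of Thom spectra. Both ends of $B\Psi$ are already known: by Proposition \ref{applic1} (resp. Proposition \ref{applic1'}) one has $B\text{Cob}_{d,\langle 1\rangle}^{\theta}\simeq\Omega^{\infty-1}\Sigma^{\infty}B\theta(d+1)_+$ and $B\text{Cob}_{d,\langle 0\rangle}^{\theta}\simeq\Omega^{\infty-1}B\theta(d)^{-\theta^*\gamma_d}$, where $B\theta(d)^{-\theta^*\gamma_d}$ is $MT(d)$ for trivial $\theta$ and $MTSO(d)$ in the oriented case. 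By the Main Corollary specialized to $k=1$, the first spectrum is the mapping cone of the structure map $j\colon B^{-\theta^*\gamma_d}\langle 1\rangle(0)\to B^{-\theta^*\gamma_d}\langle 1\rangle(1)$; unravelling the definitions of Section 3, $B^{-\theta^*\gamma_d}\langle 1\rangle(0)=\Sigma^{-1}B\theta(d)^{-\theta^*\gamma_d}$, $B^{-\theta^*\gamma_d}\langle 1\rangle(1)=B\theta(d+1)^{-\theta^*\gamma_{d+1}}$, and $j$ is exactly the stabilization map $\Sigma^{-1}MT(d)\to MT(d+1)$ occurring in the proof of Proposition \ref{applic1}. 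Hence the Puppe connecting map of this cone is a spectrum map
$$\partial\colon\;\Sigma^{\infty}B\theta(d+1)_+\;\longrightarrow\;\Sigma\,B^{-\theta^*\gamma_d}\langle 1\rangle(0)\;=\;B\theta(d)^{-\theta^*\gamma_d},$$
which is precisely the map $\Sigma^{\infty}BO(d+1)\to MT(d)$ (and its oriented analogue) of that cofibration sequence. It therefore suffices to prove $B\Psi\simeq\Omega^{\infty-1}\partial$.

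To this end I would carry $\Psi$ along the zig-zag $B\Cob\overset{(1)}{\rightarrow}B|\sheafC|\overset{(2)}{\rightarrow}B|\sheafCtr|\overset{(3)}{\leftarrow}B|\sheafDtr|\overset{(4)}{\rightarrow}|{\beta}\sheafDtr|\overset{(5)}{\rightarrow}|\sheafD|\overset{(6)}{\rightarrow}\kzerospec$ of the previous section (for $k=1$, together with its $k=0$ counterpart for the target). On each of the sheaves $\sheafC$, $\sheafCtr$, $\sheafDtr$, $\sheafD$ the functor $\Psi$ is realized by the boundary-face natural transformation $(W,g)\mapsto(W(0),g(0))=(\partial_1W,\partial_1g)$, with the induced boundary orientation in the oriented case. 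Neatness guarantees that this operation preserves the defining conditions of each sheaf — $\pi|_{\partial_1W}$ is again a submersion with fibres of the correct dimension, $(\pi,f)$ stays proper, and one has $(T^{\pi}W_N)(0)=T^{\pi}(\partial_1W)_N$, so the tangential condition is inherited — and it is strictly compatible with every forgetful and inclusion map in the zig-zag. So everything reduces to the Pontrjagin--Thom map (6): one must show that the square whose left edge sends $(W,g)\in\sheafD(X)$ to its $\langle 0\rangle$-boundary face and whose right edge is $\Omega^{\infty-1}\partial$ — composed with Laures's identification (Proposition \ref{PuppeLaures}) — commutes up to homotopy.

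That square is verified by unwinding the map (6). Given $(W,g)\in\sheafD(X)$ one picks $c\in\mathbb{R}$ that is simultaneously a regular value of $f$ on $W$ and on $\partial_1W$ (a full-measure condition, by Appendix \ref{appendix1}), forms the slice $M:=f^{-1}(c)$, and takes its $\langle 1\rangle$-Pontrjagin--Thom collapse inside $X\times(\mathbb{R}^1_+\times\mathbb{R}^{d-1+n})^c$; since $f^{-1}(c)\cap\partial_1W=\partial_1M$ by neatness, the $\langle 0\rangle$-collapse of $\partial_1M$ inside the stratum $\mathbb{R}^1_+(0)\times\mathbb{R}^{d-1+n}\cong\mathbb{R}^{d-1+n}$ is literally the restriction of the $\langle 1\rangle$-collapse of $M$ to that stratum. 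Under the equivalence $\Omega^{\infty}_{\langle 1\rangle}\underline{X}\simeq\Omega^{\infty}\underset{\underline{2}^1_*}{\text{hocolim}}\,\underline{X}_*$, the operation of discarding the part of a $\langle 1\rangle$-map living over the top stratum $1$ and keeping the stratum-$0$ part is, by inspection of Laures's comparison, the map of the mapping cone that collapses the image of $\underline{X}(1)$ — which is exactly $\partial$ — and the stratum-$0$ datum so obtained is the $\langle 0\rangle$-Pontrjagin--Thom class of $\partial_1M$, namely the image of $\Psi(W,g)$ under (6). This yields $B\Psi\simeq\Omega^{\infty-1}\partial$. For the final clause, with $d=k=1$ and $\theta$ the orientation fibration the cofibration sequence reads $\Sigma^{-1}\mathbb{S}\to MTSO(2)\to\Sigma^{\infty}_+\mathbb{CP}^{\infty}$ (using $BSO(1)=*$ and $BSO(2)=\mathbb{CP}^{\infty}$), so $\partial\colon\Sigma^{\infty}_+\mathbb{CP}^{\infty}\to\mathbb{S}$, which has been identified with the circle transfer.

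The step I expect to be the main obstacle is the bookkeeping in the third paragraph: arranging the suspension shifts in Laures's equivalence so that ``forget the top-stratum part of the $\langle 1\rangle$-collapse'' becomes the connecting homomorphism $\partial$ on the nose rather than up to an unaccounted suspension (the extra suspension should be absorbed into the difference between $\Omega^{\infty}$ and $\Omega^{\infty-1}$), and checking that the $\theta$-structure inherited by $\partial_1W$ corresponds, under the identification of the sphere bundle $S(\theta^*\gamma_{d+1})$ with $B\theta(d)$ — which is precisely where the hypothesis that $\theta$ is trivial or an orientation enters — to the tangential data carried by $B\theta(d)^{-\theta^*\gamma_d}$.
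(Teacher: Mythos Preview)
Your proposal is correct and reaches the same identification as the paper: $B\Psi$ is, after Laures's equivalence, the Puppe connecting map of the cofibre sequence $\Sigma^{-1}MT(d)\to MT(d+1)\to\Sigma^{\infty}BO(d+1)_+$. The core content---that ``restrict a $\langle 1\rangle$-Pontrjagin--Thom collapse to its $0$-stratum'' becomes the connecting homomorphism under Proposition~\ref{PuppeLaures}---is exactly what the paper isolates in its second commutative square.

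The organization differs in one respect worth noting. You carry $\Psi$ through the entire six-step zig-zag of sheaves, verifying at each stage that $(W,g)\mapsto(\partial_1W,\partial_1g)$ is compatible with the forgetful and inclusion maps. The paper bypasses this by constructing a \emph{direct} functor from $\Cob$ to the path-category $\mathcal{P}(\Omega^{\infty-1}_{\langle k\rangle}B^{-\theta^*\gamma_d}\langle k\rangle)$, sending a morphism $(W,g,a,b)$ to the path $t\mapsto(\text{Thom collapse onto a tubular neighbourhood of }W\cap\{t\})$; on classifying spaces this realises the equivalence of the Main Theorem in one step. The restriction functor $\Psi$ then manifestly commutes with this direct map (the $0$-stratum of the collapse is the collapse of the $0$-stratum), so only the second square---Laures's equivalence versus the Puppe map---remains. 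Your route is perfectly sound but longer; the path-category shortcut is what lets the paper avoid revisiting the sheaf machinery. The bookkeeping worry you flag in your last paragraph is precisely the content of that second square, and the paper handles it at the same level of detail you do: it is asserted to follow from the fact that Proposition~\ref{PuppeLaures} in the case $k=1$ \emph{is} the fibre/cofibre comparison for spectra.

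For the circle transfer clause the paper cites an external reference rather than appealing to the explicit description of $\partial$; your direct identification via $BSO(1)=\ast$, $BSO(2)=\mathbb{CP}^{\infty}$ is equivalent.
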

\begin{thm}
If $l>0$, and $\theta$ is the identity or orientation fibration, notice from proposition \ref{applic1} (and analogously for proposition \ref{applic1'}) that
$B\Psi$ is a map from $$\Omega^{\infty}(\text{spectrum underlying }B\text{Cob}_{d,\langle{k-1}\rangle}\;{\bigvee}\;\text{spectrum underlying }B\text{Cob}_{d+1,\langle{k-1}\rangle})$$ $$\text{to  }\Omega^{\infty}(\text{spectrum underlying }B\text{Cob}_{d,\langle{k-1}\rangle}).$$  $B\Psi$ is induced by the spectrum map which is the identity on the first summand and collapses the second summand to a point. 
\end{thm}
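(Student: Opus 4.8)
The plan is to leverage the computation from the proof of Proposition \ref{applic1}, where the homotopy colimit over $\underline{2}^k_*$ was shown to split off a $\Sigma$ of a smaller homotopy colimit plus the homotopy colimit over $\underline{2}^{k-1}_*$ of the $\partial_k$-restricted diagram. The key observation is that this splitting is \emph{natural} with respect to the structure of the $\brac$-diagram, and that the restriction functor $\Psi$ (restricting a $\brac$-manifold to its face $W(c)$ for $|c|=l$) corresponds on the level of Thom spectra to the map $\plainthom\brac \to \plainthom\langle{l}\rangle$ induced by precomposition with the inclusion $\underline{2}^l \xrightarrow{\simeq} \underline{2}^k(c) \hookrightarrow \underline{2}^k$. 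So the first step is to identify precisely how $\Psi$ acts on the homotopy colimit of Thom spectra, using the Main Corollary to replace $B\Cob$ by $\Omega^{\infty-1}$ of $\underset{\underline{2}^k_*}{\text{hocolim}}\;\plainthom\brac_*$ (and the oriented analogue).

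Second, I would trace through the splitting in the proof of Proposition \ref{applic1}. The crucial point is that the lemma there shows the connecting map $f$ is nullhomotopic, which gives the splitting
$$\underset{\underline{2}^k_*}{\text{hocolim}}\;\plainthom_*\;\simeq\;\Sigma\,\smallhocothom\;\vee\;\bighocothom.$$
Now, of the two summands, $\bighocothom = \underset{\underline{2}^{k-1}_*}{\text{hocolim}}\;\overline{\partial_k}\plainthom_*$ is the homotopy colimit of the diagram obtained by restricting to the codimension-one face where the last coordinate is maximal; iterating, this contains the summand that ``survives'' restriction to $W(c)$. One should choose $c$ (WLOG, after permuting coordinates) so that the face $W(c)$ is reached by repeatedly applying $\overline{\partial}$ operations; then $\Psi$ on homotopy colimits is, under the splitting, the projection onto the iterated-$\overline{\partial}$ summand, which unravels (after the identification $\plainthom(b) = \Sigma^{|b|-k}MT(d+|b|)$ and Pascal's triangle bookkeeping done in Proposition \ref{applic1}) to the summand $\text{spectrum underlying }B\text{Cob}_{d,\langle{k-1}\rangle}$, sitting inside $\text{spectrum underlying }B\text{Cob}_{d,\langle{k-1}\rangle}\vee\text{spectrum underlying }B\text{Cob}_{d+1,\langle{k-1}\rangle}$. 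The $\Sigma\,\smallhocothom$ part — which after unraveling is the $B\text{Cob}_{d+1,\langle{k-1}\rangle}$ summand — maps to a point under $\Psi$ because passing to the face $W(c)$ kills the contribution of the top face $\partial_k W$ (the collar direction is collapsed), matching the geometric fact that $\Psi$ discards all data not supported on $W(c)$.

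Third, I would verify the naturality carefully at the sheaf level rather than only on Thom spectra: the functor $\Psi$ is defined on the cobordism categories directly (sending $(W,g,a,b)$ to $(W(c),g(c),a,b)$), so I should check that under the zig-zag of equivalences $B\Cob \simeq \Omega^{\infty-1}(\cdots)$ established in Section 5, $B\Psi$ corresponds to the evident restriction map of $\brac$-prespectra induced by the inclusion of posets. This is essentially formal — each step in the zig-zag (the sheaf models $\sheafC,\sheafCtr,\sheafDtr,\sheafD$ and the Pontrjagin--Thom map) is natural in the index $k$ in the sense that restricting a $\brac$-manifold to a face commutes with all the constructions — but it requires stating a compatibility lemma and checking it against the Pontrjagin--Thom construction in the proof of $|\sheafD|\simeq\kzerospec$.

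The main obstacle I expect is the last point: showing the zig-zag of equivalences is compatible with the face-restriction functor, and in particular that under the explicit splitting of the homotopy colimit (which was proved using the rotation matrices $M_\theta$ and a specific deformation retraction onto $\text{hocolim}_{\underline{2}^k_*-1}$) the image of $B\Psi$ really is the identity-on-one-summand, collapse-the-other map and not some twisted version of it. Getting the signs and indices right in the Pascal's-triangle identification — making sure the ``$d$'' summand rather than a ``$d+j$'' summand is the one preserved — is where care is needed; the geometric intuition (restriction to a lower face cannot raise dimension, and forgets the collar that produces the higher-dimensional wedge summands) is a useful guide but needs to be matched precisely against the inductive bookkeeping of Proposition \ref{applic1}.
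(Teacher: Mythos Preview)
Your overall strategy is the same as the paper's: translate $B\Psi$ to the spectrum level via the Main Corollary, then use the splitting from the nullhomotopy lemma in the proof of Proposition~\ref{applic1} to read off that the induced map is $\mathrm{id}\vee *$. The paper also reduces first to the case $l=k-1$ (any restriction is a composite of codimension-one restrictions), which you implicitly do by ``iterating.''

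There is, however, a genuine bookkeeping error in your second step: you have the two wedge summands swapped. Recall from the proof of Proposition~\ref{applic1} that
\[
\underset{\underline{2}^k_*}{\mathrm{hocolim}}\,\plainthom_*\;\simeq\;\Sigma\,\smallhocothom\;\vee\;\bighocothom,
\]
and that it is the $\Sigma\,\smallhocothom$ summand (the $\partial_k$-part, after suspension) that unravels to $\underset{\underline{2}^{k-1}_*}{\mathrm{hocolim}}\,G^{-\gamma_d}\langle k{-}1\rangle_*$, i.e.\ the spectrum underlying $B\mathrm{Cob}_{d,\langle k{-}1\rangle}$; the $\overline{\partial_k}$-summand $\bighocothom$ gives the $d{+}1$ spectrum. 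Correspondingly, the restriction to the face $W(c)$ with $|c|=k-1$ (say $c=(1,\dots,1,0)$) is precisely the $\partial_k$ operation on the $\brac$-diagram, not $\overline{\partial_k}$: in the paper's convention $\partial_k$ inserts a $0$, and the subposet $\{a\leq c\}$ is the image of $\partial_k$. The Puppe map out of the cofiber lands in $\Sigma\,\smallhocothom$, which is why the surviving summand is the dimension-$d$ one. Your geometric heuristic (``passing to $W(c)$ kills the top face'') has the roles of $\partial_k W$ and the complementary face reversed.

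On your third step: the paper avoids tracing naturality through the entire sheaf zig-zag by instead writing down a single commuting square of topological categories
\[
\xymatrix{
\Cob\ar[d]_{\Psi}\ar[r]&\mathcal{P}\bigl(\Omega^{\infty-1}_{\langle k\rangle}B^{-\theta\gamma_d}\langle k\rangle\bigr)\ar[d]\\
\mathrm{Cob}^{\theta}_{d,\langle l\rangle}\ar[r]&\mathcal{P}\bigl(\Omega^{\infty-1}_{\langle l\rangle}B^{-\theta\gamma_d}\langle l\rangle\bigr)
}
\]
where the horizontal maps are the Pontrjagin--Thom maps into the path categories and the right vertical map is restriction of $\brac$-maps to $\langle l\rangle$-maps. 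This identifies $B\Psi$ with the restriction map on $\Omega^{\infty-1}_{\brac}$ directly, and then one applies the Laures equivalence (Proposition~\ref{PuppeLaures}) and the nullhomotopy to read off $\mathrm{id}\vee *$ from the Puppe sequence. Your plan to check compatibility with each of $\sheafC,\sheafCtr,\sheafDtr,\sheafD$ would work, but is more labor than needed.
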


\begin{proof}Notice the following diagram is a commutative diagram of topological categories:
$$\xymatrix{
\Cob\ar[d]_{\Psi}\ar[r]&\mathcal{P}(\Omega^{\infty-1}_{\langle{k}\rangle}B^{-\theta{\gamma}_d}\langle{k}\rangle)\ar[d]\\
\text{Cob}_{d,\langle{l}\rangle}^{\theta}\ar[r]&\mathcal{P}(\Omega^{\infty-1}_{\langle{l}\rangle}B^{-\theta{\gamma}_d}\langle{l}\rangle).
}$$
Here $\mathcal{P}$ is the functor which assigns to any topological space its path-space category.  The horizontal maps are defined by sending a morphism $(W,g,a,b)$ to the path which at time $a\;{\leq}\;t\;{\leq}\;b$ is the composition of the Thom collapse of $(\mathbb{R}^{\infty}{\times}\mathbb{R}^k_+{\times}\{t\})^c$ to the tubular neighborhood of the subspace of $W$ lying over $t$ with the map $g^*$.  On classifying spaces, this map induces the homotopy equivalence of \ref{mainthm}.  The right vertical map is simply restriction.\\

Now we focus on the special case where $\theta$ is the identity, or the orientation map, $k=1$ and $l=0$.  Then we have another diagram which commutes up to homotopy:
$$\xymatrix{
\Omega^{\infty-1}_{\langle{1}\rangle}BO(d)^{-\theta{\gamma}_d}\langle{1}\rangle\ar[r]\ar[d]&\Omega^{\infty-1}\text{hocolim}({\Sigma}^{-1}BO(d)^{-\theta{\gamma}_d}\rightarrow{BO(d+1)^{-\theta{\gamma}_{d+1}}})\ar[d]\\
\Omega^{{\infty}-1}BO(d)^{-\theta{\gamma}_d}\ar[r]^{=}&\Omega^{{\infty}-1}BO(d)^{-\theta{\gamma}_d}\\
}.$$
The vertical map on the left is in the same homotopy class as the map on classifying spaces induced by the right vertical map in the previous square.
The horizontal top map is the equivalence from proposition \ref{PuppeLaures}, which in this case reduces to the fact that the homotopy fiber of a map between zero spaces of spectra is homotopic to the zerosection of the homotopy cofiber of the corresponding map.  It follows that defining the right vertical map to be the map in the Puppe sequence $$\text{hocolim}({\Sigma}^{-1}BO(d)^{-{\gamma}_d}\rightarrow{BO(d+1)^{-{\gamma}_{d+1}}})\;\longrightarrow\;{\Sigma}{\Sigma}^{-1}BO(d)^{-{\gamma}_d}=BO(d)^{-{\gamma}_d}$$
yields a homotopy commutative diagram.\\

It is known ($\cite{CircTran}$) that when $d=k=1$, $l=0$, and $\theta$ is trivial or an orientation, the map on the right is induced by a circle transfer.  Thus in the oriented case, we have identified this map as being up to homotopy the map induced by the circle transfer
$$Q({\Sigma}\mathbb{CP}^{\infty}_+)\;\longrightarrow\;QS^0$$

For general $0\;{\leq}\;l<k$, notice that any restriction functor is the composition of restriction functors where $l=k-1$; thus it suffices to consider this special case.  We have just dealt with the case $k=1$.  Now we investigate larger $k$ and find the maps to be less interesting.  The argument precedes as before, so we find a homotopy commutative diagram:
$$\xymatrix{
B\Cob\ar[d]_{\Psi}\ar[r]&\Omega^{\infty-1}\Sigma^{\infty}\text{hocofib}
({\Sigma}^{-1}(\underset{j=1}{\overset{l}{\bigvee}}\overset{{l-1}\choose{j\text{-}1}}{\bigvee}BO(d+j)_+)\rightarrow\underset{j=2}{\overset{k}{\bigvee}}\overset{{k\text{-}1}\choose{j\text{-}1}}{\bigvee}BO(d+j)_+)\ar[d]\\
B\text{Cob}_{d,\langle{k-1}\rangle}\ar[r]&\underset{j=1}{\overset{l}{\bigvee}}\overset{{l-1}\choose{j\text{-}1}}{\bigvee}BO(d+j)_+
}.$$
We've already established the cofiber is of a nullhomotopic map, so the vertical  map on the right is given by $\text{id}\;{\vee}\;*$.\end{proof}

$\textbf{3 - Maps To A Background Space}$\\

Let $X$ be any topological space and let $\fullstruc$ be tangential data.  Then consider the category $\Cob(X)$ whose objects are pairs $(\alpha,x)$ where $\alpha=(N,a,f)$ is an object in $\Cob$ and $x$ is a map from $N$ to $X$.  Likewise its morphisms are pairs $(\beta,x)$ where $\beta=(W,a,b,F)$ is a morphism in $\Cob$ and $x$ is a map from $W$ to $X$. 

\begin{thm}$$B\Cob(X)\;\;{\simeq}\;\;X_+\;{\wedge}\;\underset{\underline{2}^{k}_*}{\text{hocolim}}{\thom}_*\;({\simeq}X_+{\wedge}B\Cob).$$\end{thm}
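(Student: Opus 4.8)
The plan is to recognize $\Cob(X)$ as the cobordism category attached to a modified tangential structure, and then invoke the Main Corollary. Write $\underline{X}$ for the constant $\brac$-space with value $X$, and from the structure $\fullstruc$ over $G(d,n)\brac$ form a new structure $\theta'$ as follows: for each $n$ its $\brac$-space is $\struc\times\underline{X}$, its map to the Grassmannian is $\strucmapN\circ\text{pr}_1\colon\struc\times\underline{X}\to G(d,n)\brac$, and its stabilization map is $\sigma'_N\times\text{id}_X$. First I would check that $\theta'$ really is a structure: for each $\twokob$, $\strucmapN(a)\circ\text{pr}_1$ is the composite of the projection $\struc(a)\times X\to\struc(a)$ --- a fibration for an arbitrary space $X$ --- with the fibration $\strucmapN(a)$, hence a fibration; and $\theta'_{N+1}\circ(\sigma'_N\times\text{id}_X)=\sigma_N\circ\theta'_N$ is immediate from the corresponding identity for $\theta$. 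Unwinding the definition of a $\fullstruc$-structure, a $\theta'$-structure on a neat $\brac$-submanifold $N$ amounts to a $\theta$-structure on $N$ together with a single (otherwise unconstrained) map $N\to X$, because $\strucmap\circ\text{pr}$ only constrains the first coordinate. Comparing the descriptions of objects, of morphisms, and of their topologies --- using $C_\brac(M,\struc\times\underline{X})=C_\brac(M,\struc)\times C(M,X)$, so that $Y(M)$ formed for $\theta'$ is $Y(M)$ formed for $\theta$ times $C(M,X)$, with $\text{Diff}_\brac(M)$ acting diagonally --- then identifies $\Cob(X)$ with $\text{Cob}_{d,\brac}^{\theta'}$ as topological categories.

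Applying the Main Corollary to $\theta'$ gives $B\Cob(X)=B\,\text{Cob}_{d,\brac}^{\theta'}\simeq\Omega^{\infty-1}\underset{\underline{2}^k_*}{\text{hocolim}}\;B^{-(\theta')^*\gamma_d}\brac_*$, so it remains to identify the $\brac$-prespectrum $B^{-(\theta')^*\gamma_d}\brac$ with the levelwise half-smash $X_+\wedge\thom\brac$. For each $n$ and each $\twokob$, the bundle $\theta'_N(a)^*\gamma^{\perp}(d,n)(a)$ equals $\text{pr}^*\bigl(\strucmapN(a)^*\gamma^{\perp}(d,n)(a)\bigr)$, a vector bundle over $\struc(a)\times X$ pulled back along the projection; its disk and sphere bundles are the ones for $\theta$ crossed with $X$, so its Thom space is $(D\times X)/(S\times X)\cong(D/S)\wedge X_+$, that is, $\thomN(a)\wedge X_+$. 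Under this identification the cubical maps $B^{-(\theta')^*\gamma_d}(a<b)$ and the stabilization maps are exactly the corresponding maps for $\theta$ smashed with $\text{id}_{X_+}$, which yields the claimed isomorphism of $\brac$-prespectra $B^{-(\theta')^*\gamma_d}\brac\cong X_+\wedge\thom\brac$.

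Finally, $X_+\wedge(-)$ is a left adjoint preserving cofibrations and weak equivalences, hence homotopy colimits, and both the hocolim over $\underline{2}^k_*$ and the smash with $X_+$ are computed levelwise on prespectra, so $\underset{\underline{2}^k_*}{\text{hocolim}}\;B^{-(\theta')^*\gamma_d}\brac_*\simeq X_+\wedge\underset{\underline{2}^k_*}{\text{hocolim}}\;\thom_*$. Combining this with the display above yields $B\Cob(X)\simeq\Omega^{\infty-1}\bigl(X_+\wedge\underset{\underline{2}^k_*}{\text{hocolim}}\;\thom_*\bigr)$, which is the stated equivalence (the statement suppresses the outer $\Omega^{\infty-1}$), and by the Main Corollary the right-hand spectrum is $X_+$ smashed with the spectrum underlying $B\Cob$ --- this is the parenthetical. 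I do not expect a genuine obstacle: essentially all the content is in the middle step, the Thom-space-of-a-pullback computation and its compatibility with the whole cubical and stabilization diagrams, which is routine bookkeeping, together with the equally routine verification that $\Cob(X)\cong\text{Cob}_{d,\brac}^{\theta'}$ respects the embedding- and mapping-space topologies. The one point worth stating carefully is notational: $\Omega^{\infty-1}$ does not commute with $X_+\wedge(-)$, so the expression $X_+\wedge B\Cob$ is to be read as the half-smash applied to the underlying spectrum --- equivalently, $B\Cob(X)$ realizes the generalized homology theory of $X$ associated with that spectrum.
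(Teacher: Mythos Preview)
Your proposal is correct and follows essentially the same route as the paper: identify $\Cob(X)$ with the cobordism category for the structure $\theta'=\theta\circ\mathrm{pr}_1$ on $\struc\times\underline{X}$, apply the Main Corollary, recognize the resulting Thom spectrum as $X_+\wedge\thom\brac$, and pull $X_+\wedge(-)$ outside the homotopy colimit. You supply considerably more detail than the paper does (the fibration check, the Thom-space-of-a-pullback identification, and especially the clarification that the parenthetical $X_+\wedge B\Cob$ must be read at the spectrum level since $\Omega^{\infty-1}$ does not commute with half-smash), all of which is welcome and accurate.
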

\begin{proof}

 The category $\Cob(X)$ is clearly isomorphic to the category $\text{Cob}_{d,\brac}^{{\strucmap}p}$ where $p$ is the projection of $X{\times}\struc$ onto $\struc$ composed with $\strucmap$.  Then the $\textbf{Main Corollary 4.5}$ yields
$$B\text{Cob}_{d,\brac}^{{\strucmap}p}\;\;\simeq\\;\;{\Omega}^{{\infty}-1}\;\underset{\underline{2}^k_*}{\text{hocolim}}\;(X_+{\wedge}{\thom})_*.$$
A quick check using the universal property of the homotopy colimit shows that $$\underset{\underline{2}^k_*}{\text{hocolim}}\;(X_+\;{\wedge}\;{\thom})_*\;\;{\simeq}\;\;X_+\;{\wedge}\;\underset{\underline{2}^k_*}{\text{hocolim}}\;{\thom}_*.$$
Hence we conclude
$$B\Cob(X)\;\;=\;\;B\text{Cob}_{d,\brac}^{{\strucmap}p}\;\;{\simeq}\;\;X_+\;{\wedge}\;\underset{\underline{2}^k_*}{\text{hocolim}}\;{\thom}_*.$$
\end{proof}

\section{Appendix}

We extend some definitions and facts (mostly) from differential topology 
to the $\brac$-space setting.\\

\begin{append}\label{appendix1}$\textbf{ - Regular values for }\langle\textbf{k}\rangle\textbf{-manifolds}$\end{append}

\begin{defn}Let $M$ be a $d$-dimensional $\brac$-submanifold of $\mathbb{R}^k_+{\times}\mathbb{R}^N$ for some $N,k\;{\geq}\;0$, let $X$ be a plain old closed $l$-dimensional manifold promoted to a $\brac$-space, and let $f:M\;{\rightarrow}\;X$ be a smooth $\brac$-map.  We say $c\;{\in}\;X$ is a regular value if $c$ is a regular value of $f(a)$ for each $\twokob$.\end{defn}

$\textbf{Fact i)}$ If $c\;{\in}\;N$ is a $\brac$-regular value, then $f^{-1}(c)$ is a $\brac$-submanifold of $M$ of codimension one.\\
\newline
$\textbf{Fact ii)}$ If $f(1)$ is closed (in particular, if it is proper then it is closed), then the set $Z$ of $\brac$-regular values is an open subset of $N$ and $N-Z$ has measure 0.\\

\begin{proof} For $\twokob$, let $Z(a)$ be the regular values of $f(a):M(a)\;\rightarrow\;{N}$.  Then by Sard's theorem, $Z(a)$ is open and $N-Z(a)$ has measure zero.  As $Z=\cap_{\twokob}Z(a)$, the statement follows.\end{proof}

\begin{append}\label{appendix2}$\textbf{ - Transversality for }\langle\textbf{k}\rangle\textbf{-manifolds}$\end{append}
\begin{defn}A map of $\brac$-manifolds $f:M\;{\rightarrow}\;N$ is $\brac$-transverse to a $\brac$-submanifold $S$ of $N$ if for each $\twokob$, $f(a):M(a)\;{\rightarrow}\;N(a)$ is transverse to $S(a)$.\end{defn}

 $\textbf{Observation: }$
 Let $f:M\;{\rightarrow}\;N$ be a $\brac$-map that is transverse to a neat $\brac$-submanifold $S$ of $N$.  Then $f^{-1}(S)$ is a $\brac$-submanifold of $M$ of codimension equal to the codimension of $S$.

Let $M$ and $N$ be compact $\brac$-manifolds and let $S$ be a neat $\brac$-submanifold of $N$.  Let $C^{\infty}(M,N)$ denote the space of smooth
 $\brac$-maps from $M$ to $N$.  Let $T_SC^{\infty}{\brac}(M,N)$ denote the subspace of $C^{\infty}_{\brac}(M,N)$ consisting of those maps $\brac$-transverse to $S$.  

\begin{lemma}\label{translemma}$\textbf{ - Thom Transversality for }\langle\textbf{k}\rangle\textbf{-manifolds:}$
$$T_SC^{\infty}_{\brac}(M,N)\textit{ is dense (and open) in }C^{\infty}_{\brac}(M,N).$$\end{lemma}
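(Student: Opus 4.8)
The plan is to reduce the $\brac$-statement to the classical Thom transversality theorem applied simultaneously over all the faces $M(a)$, and to handle the compatibility between faces by an inductive ``extend a perturbation from lower strata'' argument. First I would recall the classical fact (Thom transversality, e.g. in the form used in \cite{GMTW}): for ordinary compact manifolds $P, Q$ and a submanifold $R$, the maps transverse to $R$ form a dense open subset of $C^\infty(P,Q)$ in the Whitney $C^\infty$ topology. Openness of $T_SC^{\infty}_{\brac}(M,N)$ is then immediate: the topology on $C^{\infty}_{\brac}(M,N)$ is, as noted just before Definition~\ref{defncobobs}, a subspace of the product $\prod_{a} C^{\infty}(M(a),N(a))$ (via an appropriate limit), and $\brac$-transversality is by definition the condition that the image in each factor $C^{\infty}(M(a),N(a))$ lies in the (open) subset $T_{S(a)}C^{\infty}(M(a),N(a))$. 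A finite intersection of preimages of open sets is open. Since $M$ has finitely many faces (the index set $\underline{2}^k$ is finite), this gives openness.

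For density, the point is that I cannot simply perturb each $f(a)$ independently, since the naturality squares $M(a)\to M(b)$, $N(a)\to N(b)$ must be respected; a perturbation of $f(b)$ near $M(a)$ is forced to agree with the perturbation of $f(a)$. So I would argue by induction on the skeleta of the cube $\underline{2}^k$ ordered by $|a|$. The base case is the minimal face $M(0)$: by classical Thom transversality, perturb $f(0)$ to $f'(0)$ transverse to $S(0)$, by an arbitrarily small homotopy. Inductively, suppose we have perturbed $f$ on a $\brac$-neighborhood of $\bigcup_{|a|<m} M(a)$ so that each $f'(a)$ with $|a|<m$ is transverse to $S(a)$, and the perturbation is small. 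For each $a$ with $|a|=m$, the map $f'$ is already defined and transverse on a neighborhood of $\partial M(a) = \bigcup_{b<a} M(b)$ (here the geometric/neat structure of $M$, Definition of neat embedding, lets us identify such a neighborhood with a product $M(b)\times[0,\epsilon)^{\cdots}$, so the germ of the perturbation extends to a collar). I then invoke the \emph{relative} Thom transversality theorem for manifolds with corners: given a map $M(a)\to N(a)$ already transverse to $S(a)$ on a neighborhood of a closed set (the corner locus $\partial M(a)$), it can be perturbed, rel that neighborhood and by an arbitrarily small homotopy, to a map transverse to $S(a)$ everywhere. Doing this for all $a$ with $|a|=m$ — the perturbations on distinct such faces are independent since distinct $m$-faces meet only in lower strata where nothing is changed — completes the induction step, and after $k+1$ stages we obtain $f'$ that is $\brac$-transverse to $S$ and $C^\infty$-close to $f$.

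The technical heart, and the step I expect to be the main obstacle, is the relative transversality statement for manifolds with corners used in the inductive step: producing a perturbation that is simultaneously (a) transverse to $S(a)$ over all of $M(a)$, (b) \emph{unchanged} on a neighborhood of the corner stratum $\partial M(a)$, and (c) \emph{neat}, i.e. still perpendicular to the faces in the collar coordinates, so that the result is again an honest $\brac$-map. The way I would handle this is the standard parametrized-transversality device: pick a smooth bump function $\lambda$ supported away from $\partial M(a)$ and equal to $1$ on the region where transversality has not yet been arranged, choose an auxiliary family of maps $F\colon M(a)\times U \to N(a)$ ($U$ a ball of parameters) with $F(\cdot,0)=f'(a)$ and $F$ a submersion in the parameter directions away from $\mathrm{supp}(1-\lambda)$, apply the classical parametrized transversality theorem to get that $F(\cdot,u)$ is transverse to $S(a)$ for a.e. $u$, and take $u$ small. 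Compatibility with the corner structure (conditions (b),(c)) is arranged by building the auxiliary family so that it is the constant family near $\partial M(a)$ and respects the product collar structure there — this is exactly the kind of ``$\brac$-ification'' of a standard construction that the paper carries out elsewhere (cf. the proof of Lemma~\ref{extendtoneighborhood}), and I would cite Appendix~\ref{appendix1} for the corners version of Sard's theorem that makes ``a.e. $u$'' meaningful. Once this relative step is in hand, the global induction and the openness argument assemble into a proof with no further surprises.
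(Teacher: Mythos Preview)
Your openness argument agrees with the paper's. For density, however, the paper takes a different and slicker route: rather than building the transverse approximation inductively over strata, it proves that each restriction map $\sigma_a\colon C^{\infty}_{\brac}(M,N)\to C^{\infty}(M(a),N(a))$ is an \emph{open} map. This is the content of a short Mini-Lemma, proved by a single collar interpolation that extends any small perturbation of $f(a)$ to a small $\brac$-perturbation of $f$. Once $\sigma_a$ is continuous and open, the preimage of the classically dense open set $T_{S(a)}C^{\infty}(M(a),N(a))$ is itself dense and open in $C^{\infty}_{\brac}(M,N)$, and a finite intersection over $a\in\underline{2}^k$ gives density of $T_SC^{\infty}_{\brac}(M,N)$. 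This sidesteps the relative transversality statement for manifolds with corners that you correctly flag as the technical heart of your argument; in exchange it needs only the extension mini-lemma, which is a simpler collar construction than your inductive step because it handles one face at a time with no compatibility constraints between strata. Your inductive approach is correct and closer in spirit to how relative transversality is usually packaged, but the paper's openness trick is shorter precisely because it reduces everything to classical transversality on each face separately plus a point-set fact about preimages of dense sets under open maps.
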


\begin{proof} Fix $a{\in}\underline{2}^k$ and let ${\sigma}_a$ be the composition of the maps
$$
C^{\infty}_{\brac}(M,N)\;\longrightarrow\underset{\{(a,b){\in}\underline{2}^k{\times}\underline{2}^k|a{\leq}b\}}{\prod}C^{\infty}(M(a),N(b)){\longrightarrow}\;C^{\infty}(M(a),N(a))$$
where the second map is just projection and the first map is the canonical inclusion map (the domain is defined and topologized as a limit over the product space in the middle).  The map ${\sigma}_a$ is continuous since both the inclusion map and projection map are continuous, the former by construction.  Now suppose ${\sigma}_a$ is open for all $\twokob$.  By Thom Transversality the subspace of $C^{\infty}(M(a),N(a))$ consisting of maps transverse to $S(a)$ is open and dense.  As ${\sigma}_a$ is open, the pre-image under ${\sigma}_a$ of this subspace must also be open and dense in $C^{\infty}_{\brac}(M,N)$, and so the intersection of these subspaces over all $\twokob$ is also open and dense in $C^{\infty}_{\brac}(M,N)$, but this space is precisely $T_SC^{\infty}_{\brac}(M,N)$.  Thus it remains to show
\begin{minilemma}
${\sigma}_a\textit{ is open for every }\twokob$\end{minilemma}

\begin{proof} Choose a Riemannian metric on $N$.  This metric induces metric on all of the function spaces in our discussion.   Let $f\;{\in}\;C^{\infty}_{\brac}(M,N)$.  It suffices to show for any ${\epsilon}>0$ there exists a ${\delta}>0$ such that for any $h\;{\in}\;C^{\infty}(M(a),N(a))$ with $d(h,{\sigma}_a(f))<{\delta}$ there exists $H\;{\in}\;C^{\infty}_{\brac}(M,N)$ satisfying ${\sigma}_a(H)=h$ and $d(H,f)<{\epsilon}$.  As $N$ is Riemannian and compact, there exists a ${\delta}_0>0$ for which balls of radius ${\delta}_0$ are geodesic.  Identify geodesic collar neighborhoods of $M(a)$ in $M(1)$ and $N(a)$ in $N(1)$ with $M(a){\times}[0,{\delta}_1)^k(1-a)$ with ${\delta}_1$ chosen small enough to ensure ${\delta}_1<\text{min}({\delta}_0,\frac{\epsilon}{3})$ and ($f(m,\vec{x}),f(m,\vec{0})$)$<\frac{\epsilon}{3}$ for all $\vec{x}\;{\in}\;[0,1)^k(1-a)$.   Now suppose $h\;{\in}\;C^{\infty}(M(a),N(a))$ with $d(h,{\sigma}_a(f))<{\delta}_1$.  Then define
$$H:M(1){\rightarrow}N(1)$$
$$m{\mapsto}f(m)\text{ if }m{\notin}M(a){\times}[0,{\delta}_1)^k(1-a)$$
$$m=(m',\vec{x}){\mapsto}r(|\vec{x}|)f(m)+(1-r(|\vec{x}|))(h(m'),\vec{x})\text{ if }m{\in}M(a){\times}[0,{\delta}_1)^k(1-a)$$
where $r:I{\rightarrow}I$ is a monotone smooth function satisfying $r([0,\frac{1}{3}])=0$ and $r([\frac{2}{3},1])=1$.
The definition makes sense since we've arranged for $d(f(m),(h(m'),\vec{x}))<{\delta}_0$.  By setting $\delta={\delta}_1$ we have found $\delta$ with the desired property.\end{proof}
This also concludes the proof of lemma \ref{translemma}.\end{proof}

\begin{append}\label{appendix3}$\textbf{ - Ehresman Fibration Theorem for } \brac \textbf{-manifolds}$\end{append}

\begin{lemma}Let $M$ be a neat $d$-dimensional $\brac$-submanifold of $\mathbb{R}^k_+{\times}\mathbb{R}^N$ for some $N,k{\geq}0$, let $X$ be a plain old closed l-dimensional manifold promoted to a $\brac$-space and suppose $f:M\;{\rightarrow}\;X$ is a smooth $\brac$-map such that each $f(a)$ is a proper submersion.  Then $f$ is a fiber bundle with a $\brac$-manifold as its fiber.\end{lemma}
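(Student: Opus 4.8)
The plan is to reduce to the classical Ehresmann fibration theorem, applied stratum-by-stratum, and then glue the resulting local trivializations together compatibly across the cubical diagram. First I would fix a point $p \in X$ and a coordinate chart $\phi : V \to \mathbb{R}^l$ around $p$; since $X$ is a plain closed manifold (not a genuine $\brac$-space), there is no boundary combinatorics to worry about on the base, and it suffices to produce a $\brac$-diffeomorphism $f^{-1}(V) \cong \mathbb{R}^l \times F$ over $V$, where $F = f^{-1}(p)$ with its induced $\brac$-structure. The key point is that $F$ is indeed a $\brac$-manifold: by hypothesis each $f(a) : M(a) \to X$ is a proper submersion, so $f(a)^{-1}(p)$ is a closed submanifold of $M(a)$, and since $M$ is a \emph{neat} $\brac$-submanifold of $\mathbb{R}^k_+ \times \mathbb{R}^N$ the faces meet perpendicularly; restricting to $f^{-1}(p)$ preserves this perpendicularity, so $F$ inherits the structure of a $\brac$-manifold with $F(a) = f(a)^{-1}(p)$.

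Next I would construct the trivialization by the standard vector-field/flow argument, but carried out $\brac$-equivariantly. Put a $\brac$-Riemannian metric on $M$, i.e.\ a compatible family of metrics on the $M(a)$ such that, near each face, the metric is a product with respect to the collar coordinates $[0,\epsilon)^k(b-a)$ coming from neatness (such metrics exist by the collar constructions already used in the proof of Lemma \ref{extendtoneighborhood}). For each coordinate direction $\partial/\partial x_j$ on $V \subset \mathbb{R}^l$, lift it to the horizontal vector field on $M(1)$ orthogonal to the fibers of $f(1)$; because the metric is a product near each face and $f$ respects faces (each $f(a)$ being the restriction of $f(1)$), this horizontal lift is automatically tangent to every $M(a)$ and restricts there to the corresponding horizontal lift for $f(a)$. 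Properness of each $f(a)$ guarantees these vector fields are complete, so their flows are defined for all time; integrating the $l$ commuting flows gives, for each $a$, a diffeomorphism $V \times F(a) \xrightarrow{\ \cong\ } f(a)^{-1}(V)$, and these assemble into a natural transformation — i.e.\ a $\brac$-diffeomorphism — precisely because the vector fields were defined uniformly and the flows commute with the face inclusions $M(a<b)$.

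The main obstacle, and the only place where the argument is more than a routine transcription of the classical proof, is verifying that the horizontal lift of a base vector field is simultaneously tangent to \emph{all} the strata $M(a)$ and agrees under the inclusion maps $M(a<b)$ — in other words, that "horizontal" is a $\brac$-notion. This is exactly what the product form of the metric near the faces buys us: in the collar $M(a) \times [0,\epsilon)^k(b-a)$ the splitting $TM(b)|_{M(a)} = TM(a) \oplus \epsilon^{|b-a|}$ is orthogonal, and since $f(b)$ factors through projection to $M(a)$ in these coordinates, the horizontal distribution for $f(b)$ restricts to that for $f(a)$ plus the trivial collar directions; hence the horizontal lift of a base field lands in $TM(a)$ along $M(a)$. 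Once this compatibility is in hand, completeness (from properness on each stratum) and the usual flow-box argument finish the proof, and the resulting fiber bundle has structure group $\mathrm{Diff}_{\brac}(F)$ with fiber the $\brac$-manifold $F$. $\Box$
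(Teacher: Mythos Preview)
Your approach is different from the paper's, and it contains a genuine gap. The paper does not build stratum-compatible horizontal lifts at all; instead it uses a reflection trick. Choose a smooth even function $\sigma:\mathbb{R}\to\mathbb{R}_+$ which is $0$ on $[0,\epsilon/3]$ and the identity on $[2\epsilon/3,\infty)$, set $\Sigma=\sigma^k\times\mathrm{id}:\mathbb{R}^k\times\mathbb{R}^N\to\mathbb{R}^k_+\times\mathbb{R}^N$, and pull back: $\overline{M}:=\Sigma^{-1}(M)$ is now a \emph{closed} submanifold of $\mathbb{R}^{k+N}$ and $f\circ\Sigma:\overline{M}\to X$ is a proper submersion in the classical sense. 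One application of ordinary Ehresmann to $f\circ\Sigma$ then gives the result for $f$, since $\Sigma$ respects the face structure. This sidesteps entirely the compatibility problem you wrestle with.

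The gap in your argument is the sentence ``since $f(b)$ factors through projection to $M(a)$ in these coordinates.'' That is simply false: the collar $M(a)\times[0,\epsilon)^{|b-a|}$ comes from the neat embedding, not from $f$, and there is no reason $f(b)$ should be constant in the collar direction. Consequently the collar directions need not lie in $\ker df(b)$, so the $f(b)$-horizontal space at a point of $M(a)$ need not sit inside $TM(a)$, and your horizontal lift on $M(1)$ is not automatically tangent to the lower strata. (Concretely: take $M=[0,1]$, $X=S^1$, and $f$ any submersion with $f'(0)\neq 0$ in the collar coordinate.) Your argument \emph{can} be repaired --- drop horizontality and instead build lifts of $\partial/\partial x_j$ inductively over the strata, extending across collars by partitions of unity (the space of lifts at each point is affine, so convex combinations of lifts are lifts) --- but as written the key step does not go through. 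The paper's reflection trick is both shorter and avoids this bookkeeping entirely.
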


\begin{proof}We convert this situation to one where the usual Ehresman
fibration theorem applies.  To that end, there is some $\epsilon$ for
which the collar of $M$ is perpendicular.  Pick a smooth $\textit{even}$ function ${\sigma}:\mathbb{R}\;{\rightarrow}\;\mathbb{R}_+$ which  restricted to $\mathbb{R}$ is a monotone increasing function such that $\sigma(x)$=0 if $x\;{\leq}\;\frac{\epsilon}{3}$ and ${\sigma}(x)=x$
if $x\;{\geq}\;\frac{{2}\epsilon}{3}$.  Now define
 
$${\Sigma}:={\sigma}^k{\times}id_{\mathbb{R}^N}:\mathbb{R}^k{\times}\mathbb{R}^{N}\;{\longrightarrow}\;\mathbb{R}^{k}_+\times\mathbb{R}^{N}$$

Now set $\overline{M}:={\Sigma}^{-1}(M)$.  Then $\overline{M}$ is a closed submanifold of $\mathbb{R}^{k+N}$ and $f{\circ}{\Sigma}:\overline{M}{\rightarrow}X$ is a proper submersion in the usual sense.  Now the usual Ehresman's Fibration Theorem applies, letting us conclude that $f\circ{\Sigma}$ is a fiber bundle, with fiber a closed $d$-$l$ dimensional manifold.  By construction we see that $f$ must also be a fiber bundle whose fibers are $d$-$l$ $\brac$-submanifolds of $\mathbb{R}^k_+{\times}\mathbb{R}^N$.  Note that the fibers of $f\circ{\Sigma}_{|M}$ are neat $\brac$-submanifolds, but we are not claiming the fibers of $f$ are neat however.\end{proof}

\begin{append}\label{appendix4}$\textbf{ - Phillips extended to k-spaces}$\end{append}

Let $M$ and $N$ be $\brac$-manifolds.

\begin{defn}Let $\text{Sur}_{\brac}(TM,TN)$ denote the space of $\brac$-bundle maps 
$TM{\rightarrow}TN$ such that each $TM(a)\;{\rightarrow}\;TN(a)$ is a fiberwise surjection.\end{defn}

\begin{defn}Let $\text{Sub}_{\brac}(M,N)$ be the space of $f{\in}C^{\infty}_{\brac}(M,N)$ such that $Df{\in}\text{Sur}(M,N)$.\end{defn}

For us, an open manifold is a manifold with no closed components.
When $k=0$, Philips's Submersion Theorem states that the map 
$${\text{Sub}_{\brac}(M,N)}\;{\rightarrow}\;\text{Sur}_{\brac}(TM,TN)$$
$$f{\mapsto}Df$$ is a weak equivalence.\\

We shall inductively extend this result to $\brac$-manifolds.  The inductive
hypothesis that seems to work naturally involves a broader class of $\brac$-spaces than $\brac$-manifolds.

\begin{defn} A $\brac$-space $M$ type 17 if:\\
i) $M(1)$ is a manifold.\\
ii) $M(a)$ is a submanifold of $M(b)$ and $M(a<b)$ is a closed cofibration for all $\twokmor$.\end{defn}

Every $\brac$-manifold is a type 17 $\brac$-space.  One important feature type 17 $\brac$-spaces possess that $\brac$-manifolds do not is that they are closed under the $\overline{{{\partial}_i}}$ operation.  We will use this fact.\\

\begin{prop} Let $N$ be a $\brac$-manifold, and let $M$ be a type 17 $\brac$-space such that $M(a)$ is an open manifold for each $\twokob$.  Then
$$\text{Sub}(M,N)\;{\longrightarrow}\;\text{Sur}(TM,TN)$$
$$f{\mapsto}Df$$
is a weak equivalence.\end{prop}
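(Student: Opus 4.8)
The plan is to induct on $k$; the case $k=0$ is the form of Phillips's theorem recalled above. In fact the natural statement to induct on is the relative one: given in addition a closed sub-$\brac$-space $A\subseteq M$ which is itself type 17 and a germ $s$ of a submersion near $A$, the map $D$ is a weak equivalence from the space of submersions agreeing with $s$ near $A$ to the space of fibrewise surjective bundle maps agreeing with $Ds$ near $A$; its base case is the classical relative Phillips theorem, and the proposition is the case $A=\varnothing$.

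For the inductive step use $\underline 2^k\cong\underline 2^{k-1}\times\{0<1\}$, so that a $\brac$-space $M$ is the same datum as a morphism $\iota_M\colon\partial_k M\to\overline{\partial_k}M$ of $\langle k-1\rangle$-spaces, a $\brac$-map $M\to N$ is a commuting square of $\langle k-1\rangle$-maps, and a $\brac$-map is a submersion exactly when both $\partial_k f$ and $\overline{\partial_k}f$ are. Restriction to the $k$-th face then gives
$$p\colon\text{Sub}_{\brac}(M,N)\longrightarrow\text{Sub}_{\langle k-1\rangle}(\partial_k M,\partial_k N),\qquad q\colon\text{Sur}_{\brac}(TM,TN)\longrightarrow\text{Sur}_{\langle k-1\rangle}(T\partial_k M,T\partial_k N),$$
intertwined by $D$, and both $p$ and $q$ are Serre fibrations: a homotopy on $\partial_k M$ is lifted by putting it in collar form near $\iota_M(\partial_k M)$ via a collar of $N$ along its $k$-th face $\partial_k N$, leaving it unchanged away from that face, and interpolating with a cutoff — here one uses that $\iota_M$ is levelwise a closed cofibration (type 17, (ii)). Since type 17 spaces are closed under $\partial_k$ and $\overline{\partial_k}$, and all the values of $\partial_k M$ and $\overline{\partial_k}M$ occur among the open manifolds $M(a)$, while $\partial_k N$ is a genuine $\langle k-1\rangle$-manifold, the map $D$ on the bases of $p$ and $q$ is a weak equivalence by the inductive hypothesis. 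Comparing the long exact homotopy sequences of $p$ and $q$ reduces the step to showing that $D$ is a weak equivalence on fibres.

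The fibre of $p$ over $g_0$ is the space of $\langle k-1\rangle$-submersions $\overline{\partial_k}M\to\overline{\partial_k}N$ whose restriction along $\iota_M$ equals the fixed map $\iota_N\circ g_0$ (and similarly for $q$), i.e.\ a relative $\langle k-1\rangle$-submersion problem for the pair $(\overline{\partial_k}M,\overline{\partial_k}N)$. The obstacle — and the technical heart of the argument — is that $\overline{\partial_k}N$ is only a type 17 $\langle k-1\rangle$-space, not a $\langle k-1\rangle$-manifold, so the inductive hypothesis does not apply to it on the nose. To circumvent this, use the collar of $N$ along $\partial_k N$ to straighten such a relative submersion near $\iota_M(\partial_k M)$ into collar form; a submersion in collar form near that face has image disjoint from $\iota_N(\partial_k N)$, so after shaving off a collar neighbourhood on the source (which stays type 17 with open values) the fibre of $p$ is identified, up to weak equivalence, with a relative $\langle k-1\rangle$-submersion space into the honest $\langle k-1\rangle$-manifold $\widehat N$ obtained from $\overline{\partial_k}N$ by deleting its $k$-th face (so $\widehat N(a')=N(a',1)\smallsetminus N(a',0)$), and likewise on the bundle-map side. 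The relative inductive hypothesis, applied to this pair with $A$ a collar of the shaved boundary, then shows $D$ is a weak equivalence on fibres, completing the induction. The bulk of the work is in making the collar-straightening precise — including the claim that collar-form submersions miss the face $\iota_N(\partial_k N)$ — and checking its compatibility with the relative $(A,s)$-data carried through the induction.
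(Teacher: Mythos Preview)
Your approach and the paper's both induct on $k$ with Phillips as the base case, but the inductive steps are organized quite differently.

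The paper does not pass through a relative version or a base/fiber fibration. Instead it writes $\text{Sub}_{\brac}(M,N)$ as a limit over the poset $P^k$ of pairs $(a,b)$ in $\underline 2^k\times\underline 2^{k,\mathrm{op}}$ with $a\le b$, where the $(a,b)$-entry is $X(a,b)=C^\infty(M(a),N(b))$ for $a<b$ and $X(a,a)=\text{Sub}(M(a),N(a))$; the analogous diagram $Y$ is built from bundle maps. Splitting along the last coordinate expresses this as a pullback
\[
\lim_{P^{k-1}}\partial_kX \;\longrightarrow\; \lim_{P^{k-1}}X(a,b+e_k)\;\longleftarrow\; \lim_{P^{k-1}}\overline{\partial_k}X,
\]
which is a homotopy pullback since the right restriction map is a fibration. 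The two outer terms are $(k-1)$-level submersion spaces, handled by induction. The key point is the \emph{middle} term: every entry $X(a,b+e_k)$ has $a<b+e_k$ strictly, so it is a space of \emph{unconstrained} smooth maps $C^\infty(M(a),N(b+e_k))$, on which $f\mapsto Df$ is a weak equivalence to $\text{Maps}(TM(a),TN(b+e_k))$ for elementary reasons. This immediately matches the middle term on the $Y$ side, and the holims agree. No relative hypothesis and no collar gymnastics are needed; the ``gluing'' between the two faces is absorbed into a space of plain smooth maps where the statement is trivial.

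Your route is more geometric but carries extra baggage. Two points to watch: first, $M$ is only type $17$, so $\iota_M:\partial_kM\hookrightarrow\overline{\partial_k}M$ is merely a closed cofibration of submanifolds, not a priori a collared hypersurface, and your homotopy-lifting/collar-straightening arguments use a collar on the $M$ side as well as the $N$ side. Second, the assertion that a collar-form submersion, after shaving the source, has image disjoint from $\iota_N(\partial_kN)$ is not automatic---nothing prevents a submersion from hitting a boundary face of the target away from the collared region---and you do not say why. (If ``open'' is read as ``boundaryless'', a local-minimum argument shows a submersion from a boundaryless source cannot touch $\partial N$; but then the same argument forbids hitting \emph{any} face of $N$, which conflicts with the compatibility $f(a')|_{M(a'-e_j)}$ landing in $N(a'-e_j)$ unless those strata are empty. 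So some care is needed about what is being assumed.) The paper's three-term decomposition sidesteps exactly this difficulty: it never has to make the target of the fibre problem into an honest $\langle k-1\rangle$-manifold, because the compatibility across the $k$th face lives in the middle term where no submersion condition is imposed.
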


\begin{proof}
Induction on k.  The base case $k=0$ is dealt with by Philips's Submersion Theorem $\cite{Philips}$.  Now assume the proposition true for $k-1$.  Let us temporarily introduce
   $$X(a,b):=C^{\infty}(M(a),N(b))\text{ if }a<b\text{ and}$$
   $$X(a,a):=\text{Sub}(M(a),N(a)).$$ 
Similarly, let
    $$Y(a,b):=\text{Maps}(TM(a),TN(b))\text{ if }a<b\text{ and}$$
   $$Y(a,a):=\text{Sur}(TM(a),TN(a)).$$
Now proceeding with the proof,
$$\text{Sub}_{\brac}(M,N)=\text{lim}_{(a,b){\in}P^k}X(a,b)$$
now we expand the limit out
$$=\text{lim}(\text{lim}_{(a,b){\in}P^{k-1}}{\partial}_kX(a,b){\rightarrow}\text{lim}_{(a,b){\in}P^{k-1}}X(a,b+e_k){\leftarrow}\text{lim}_{(a,b){\in}P^{k-1}}\overline{{\partial}_k}X(a,b))$$
The map on the right is induced by restriction to $M(1$-$e_k)$ and is 
a fibration so we may replace the limit by a homtopy limit without affecting
homotopy type.
$${\simeq}\text{holim}(\text{lim}_{(a,b){\in}P^{k-1}}{\partial}_kX(a,b){\rightarrow}\text{lim}_{(a,b){\in}P^{k-1}}X(a,b+e_k){\leftarrow}\text{lim}_{(a,b){\in}P^{k-1}}\overline{{\partial}_k}X(a,b))$$
Now by induction on k,
$${\simeq}\text{holim}(\text{lim}_{(a,b){\in}P^{k-1}}{\partial}_kY(a,b){\rightarrow}\text{lim}_{(a,b){\in}P^{k-1}}X(a,b+e_k){\rightarrow}\text{lim}_{(a,b){\in}P^{k-1}}\overline{{\partial}_k}Y(a,b))$$
The middle term may also be dealt with.  Because the spaces in the middle limit are entirely of the form $C^{\infty}(M(a),N(b))$ and the map
$$C^{\infty}(M(a),N(b)){\rightarrow}\text{Maps}(TM(a),TN(b))$$ is a weak equivalence, by an inductive argument similar to the one we are in the process of giving here, we may conclude that
$$\text{lim}_{(a,b){\in}P^{k-1}}X(a,b+e_k){\simeq}\text{lim}_{(a,b){\in}P^{k-1}}Y(a,b+e_k)$$
and hence
$${\simeq}\text{holim}(\text{lim}_{(a,b){\in}P^{k-1}}{\partial}_kY(a,b){\rightarrow}\text{lim}_{(a,b){\in}P^{k-1}}Y(a,b+e_k){\leftarrow}\text{lim}_{(a,b){\in}P^{k-1}}\overline{{\partial}_k}Y(a,b))$$
$${\simeq}\text{lim}(\text{lim}_{(a,b){\in}P^{k-1}}{\partial}_kY(a,b){\rightarrow}\text{lim}_{(a,b){\in}P^{k-1}}Y(a,b+e_k){\leftarrow}\text{lim}_{(a,b){\in}P^{k-1}}\overline{{\partial}_k}Y(a,b))$$
$$=\text{Sur}_{\brac}(TM,TN).$$
\end{proof}

\begin{append}\label{appendix5}$\textbf{ - }\langle\textbf{k}\rangle\textbf{-vector bundles}$\end{append}

In GMTW they deal with the following situation:  Let $M$ be a manifold, and $U,V$ vector bundles over $M$.  Let $\Gamma(U,V)$ denote the space of vector bundle isomorphisms from $U$ to $V$.  Stabilizing the vector bundles yields a map $\sigma:\Gamma(U,V){\rightarrow}\Gamma(U{\oplus}\epsilon,V{\oplus}\epsilon)$.  $\sigma$ is $\text{dim}U-\text{dim}M-1$ connected.\\

To make this precise: for $\twokmor$ let $\Gamma_{a}(b)$ denote the space of bundle isomorphisms $M(a<b)^*U(b){\rightarrow}M(a<b)^*V(b)$ over $M(a)$.  Clearly there are restriction maps $Res_{a<b}:\Gamma_{b}(b){\rightarrow}\Gamma_{a}(b)$ and there are inclusion maps $In_{a<b}:\Gamma_{a}(a){\rightarrow}\Gamma_{a}(b)$ coming from the chosen splitting.\\

Let $P_n=\underline{2}^k{\times}\underline{2}^{k^{\text{op}}}$, and for $(a,c)$ and $(b,d)$ with $a<b$ and $c>d$ we define $\Gamma^{\brac}((a,c)<(b,d))=Res_{d<c}{\circ}In_{a<b}$ and then $\Gamma^{\brac}(U,V):=\text{lim}_{P_n}\Gamma_a(c)$.  Let $d(a)=\text{dim}U(a)-\text{dim}M(a)$, and set $d:=\text{min}_{a{\in}\underline{2}^k}d(a)$.\\

\begin{lemma}
Let $M$ be a $\brac$-CW space such that $\text{dim}M(a)<\text{dim}M(b)$ for all $\twokmor$ (certainly all $\brac$-manifolds satisfy this criterion).  Let $U$,$V$,$\epsilon$ be geometric $\brac$-vector bundles over $M$, such that $\epsilon(a)$ is trivial for all $\twokob$.  The stabilization map
$$\sigma:\Gamma^{\brac}(U,V)\rightarrow\Gamma^{\brac}(U\oplus{\epsilon},V\oplus{\epsilon})$$
is $d-1$ connected.\end{lemma}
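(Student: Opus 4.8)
The plan is to convert the limit-theoretic definition of $\Gamma^{\brac}(U,V)$ into a space of sections of a \emph{stratified} bundle over $M(1)$ and then to run obstruction theory one stratum at a time, the genuinely new point being that the cell-by-cell induction must be compatible with the stratification of $M(1)$ by the subcomplexes $M(a)$.

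First I would unwind the definition.  Using the canonical splittings $U(b)|_{M(a)}\cong{\epsilon}^{c}{\oplus}U(a)$ carried by a geometric $\brac$-vector bundle, one sees that an element of $\Gamma^{\brac}(U,V)=\text{lim}_{P_k}\Gamma_a(c)$ is the same datum as a vector-bundle isomorphism $\phi:U(1)\to V(1)$ over $M(1)$ which, along each $M(a)$ with $a\neq 1$, respects the splittings, i.e. restricts to an isomorphism of the block form $\text{id}\oplus{\psi_a}$ with ${\psi_a}:U(a)\to V(a)$.  Equivalently, $\Gamma^{\brac}(U,V)$ is the space of sections of the bundle $\text{Iso}(U(1),V(1))\to M(1)$, with fibre $\simeq O(p)$ where $p=\dim U(1)$, subject to the requirement that over the open stratum $M(a)^{\circ}$ of each $M(a)$ (the points of $M(a)$ lying in no deeper $M(b)$) the section lands in the subbundle cut out by the splitting, whose fibre is a torsor under $O(p_a)$, $p_a=\dim U(a)$.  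Under this identification $\sigma$ realizes $\Gamma^{\brac}(U,V)$ as the subspace of sections of $\text{Iso}(U(1)\oplus{\epsilon},V(1)\oplus{\epsilon})$ that are the identity on the ${\epsilon}$-summand; over $M(a)^{\circ}$ this is fibrewise the classical stabilization $O(p_a)\hookrightarrow O(p_a+\text{rk}\,{\epsilon}(a))$, which is $(p_a-1)$-connected.

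Second, I would prove the connectivity assertion by obstruction theory organised by this stratification.  To show $\sigma$ is $d-1$ connected one must check that for $j\le d-1$ any section of $\text{Iso}(U(1)\oplus{\epsilon},V(1)\oplus{\epsilon})$ over $S^j\times M(1)$ obeying the face conditions can be deformed, through such sections, to one that is the identity on ${\epsilon}$, together with the corresponding uniqueness statement using $S^j\times I$.  One constructs the deformation by treating the subcomplexes $M(a)$ in order of increasing $\dim M(a)$, starting from the deepest corner $M(0)$; the hypothesis $\dim M(a)<\dim M(b)$ for $\twokmor$ makes this order well defined and guarantees the stratification behaves, for counting purposes, like that of a manifold with corners.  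When the stratum $M(a)^{\circ}$ is reached the section has already been put in block form over the deeper faces it meets, so the outstanding obstructions to pushing it into the $O(p_a)$-subbundle along a CW structure on $M(a)^{\circ}$ lie in cohomology with coefficients in $\pi_{\bullet}\!\big(O(p_a+\text{rk}\,{\epsilon}(a))/O(p_a)\big)$ of complexes of dimension at most $j+\dim M(a)$; since that Stiefel manifold is $(p_a-1)$-connected, these obstructions vanish once $j\le(p_a-1)-\dim M(a)=\dim U(a)-\dim M(a)-1$.  Minimising over $\twokob$ gives $j\le d-1$.  The base case $k=0$ reduces to the standard fact that a fibrewise $(p-1)$-connected map of bundles over an $n$-dimensional base induces a $(p-1-n)$-connected map of section spaces.

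The hard part is the bookkeeping of this stratified obstruction theory: one must pin down a CW structure on $M(1)$ compatible with all the $M(a)$ and carry the deformation upward through the strata so that at each stage the block form already achieved over the deeper faces is precisely what solves the next relative lifting problem, while keeping the dimension count tight enough to land on $d-1$ rather than on $d-1$ minus the number of strata.  An alternative is an induction on $k$ in the spirit of Appendix~\ref{appendix4}: up to homotopy $\Gamma^{\brac}(U,V)$ is the space of sections over $M(1)$ that are standard in a collar of the last face $M(1-e_k)$, and restriction to that face exhibits it as the total space of a fibration over $\Gamma^{\langle k-1\rangle}(\partial_k U,\partial_k V)$ whose fibre is a constrained section space governed by $\overline{\partial_k}U$ and $\overline{\partial_k}V$; the inductive hypothesis together with the connectivity of $O(p)\hookrightarrow O(p+\text{rk}\,{\epsilon})$ then feeds the five lemma directly (comparing the two fibrations, so the connectivity of the total-space map is the minimum of those of the base and fibre maps, with the single unit of loss absorbed in the $k=0$ base case).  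Either way the delicate numerical point is that this loss is exactly one.
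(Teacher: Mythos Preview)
Your \emph{alternative} approach---induction on $k$, splitting off the last face, and feeding the five lemma via a comparison of fibrations---is essentially what the paper does.  The paper writes $\Gamma^{\brac}(U,V)$ as the pullback
\[
\lim_{P_{k-1}}\Gamma_{a+e_k}(c+e_k)\;\longrightarrow\;\lim_{P_{k-1}}\Gamma_{a}(c+e_k)\;\longleftarrow\;\lim_{P_{k-1}}\Gamma_{a}(c),
\]
observes that one leg is a fibration so this is a homotopy pullback, applies the inductive hypothesis to the two outer terms, checks that the middle term has strictly higher connectivity (this is exactly where the hypothesis $\dim M(a)<\dim M(b)$ is used), and then invokes a ``mini-lemma'' which is nothing but the five-lemma statement you describe: if the three vertex maps are $i,k,j$ connected with $i<k$, the induced map on homotopy pullbacks is $\min(i,j)$ connected.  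No unit is lost in the induction; the $-1$ is already present in the $k=0$ base case and propagates unchanged.

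Your \emph{primary} approach---reinterpreting $\Gamma^{\brac}(U,V)$ as sections of $\mathrm{Iso}(U(1),V(1))$ over $M(1)$ constrained to block form along each $M(a)$, then running stratified obstruction theory---is a genuinely different and more geometric route.  It has the virtue of making the numerics transparent (one sees directly that the stratum $M(a)$ contributes the bound $d(a)-1$) and avoids the somewhat opaque double-index limit over $P_k$.  The cost is exactly the bookkeeping you flag: one must produce a CW structure on $M(1)$ in which every $M(a)$ is a subcomplex and then argue that the block form already achieved on deeper strata serves as the boundary condition for the next relative lifting problem.  One small correction: the homotopy fibre governing the obstructions over $M(a)^{\circ}$ is $\Omega\bigl(O(p_a+r)/O(p_a)\bigr)$, not the Stiefel manifold itself (equivalently, the inclusion $O(p_a)\hookrightarrow O(p_a+r)$ is $(p_a-1)$-connected, which is what you use); your final dimension count $j\le d(a)-1$ is nonetheless correct.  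The paper's inductive argument is shorter and sidesteps all of this by never unpacking the limit, at the price of being less illuminating about where the bound comes from.
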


\begin{proof}  First we need a mini-lemma.
\begin{minilemma} Let $\mathcal{D}$ be the category $(A{\rightarrow}B{\leftarrow}{C})$, $F$, and $G$ functors from $\mathcal{D}$ to spaces.  Suppose $t$ is a natural transformation such that $t(A)$, $t(B)$, and $t(C)$ are i,k, and j connected respectively with i$<$k.  Then the induced map $t':\text{holim}F\rightarrow\text{holim}G$ is min$\{i,j\}$ connected.\end{minilemma}

\begin{proof}
Repeated use of the five-lemma.\end{proof}

The proof is by induction on $\brac$.  The base case $k=0$ boils down to the original result from $\cite{GMTW}$.
Now assume the result is true for all $\langle{k-1}\rangle$ manifolds.  By subdividing the limit,
$$\Gamma^{\brac}(U,V)$$
$$=\text{lim}(\text{lim}_{P_{k-1}}\Gamma_{a+e_{k}}(c+e_{k})\rightarrow\text{lim}_{P_{k-1}}\Gamma_{a}(c+e_{k})\leftarrow{\text{lim}}_{P_{k-1}}\Gamma_{a}(c))$$
The first arrow is a fibration so we may replace the outer $\text{lim}$ with $\text{holim}$, or in this simple case, the homotopy pullback, and preserve the homotopy type.
$$=\text{holim}(\text{lim}_{P_{k-1}}\Gamma_{a+e_{k}}(c+e_{k})\rightarrow\text{lim}_{P_{k-1}}\Gamma_{a}(c+e_{k})\leftarrow{\text{lim}}_{P_{k-1}}\Gamma_{a}(c))$$
By induction, the stabilization map on $\text{lim}_{P_{k-1}}\Gamma_{a+e_{k}}(c+e_{k})$ and $\text{lim}_{P_{k-1}}\Gamma_{a}(c))$ is $d_1$ and $d_2$ connected respectively, but
$\text{lim}_{P_{k-1}}\Gamma_{a}(c+e_{k})$ is $d_3>d_1$ connected since for every $a{\in}\underline{2}^{k-1}$ we have
$$\text{dim}Res_{a}U(a+e_k)-M(a)>\text{dim}U(a+e_k)-M(a+e_k)$$
Now we may use the mini-Lemma to conclude that the stabilization map on $\Gamma^{\brac}(U,V)$ is min$\{d_1,d_2\}$ connected, but min$\{d_1,d_2\}=\text{min}_{a{\in}\underline{2}^k}d(a)$.\end{proof}

\bibliographystyle{alpha}
\bibliography{k-cobordism}

\begin{thebibliography}{GMTW}

\bibitem[Baa73]{Baas}
Nils~Andreas Baas.
\newblock On bordism theory of manifolds with singluarities.
\newblock {\em Math. Scand.}, 33:279--302, 1973.

\bibitem[BCR06]{BCR}
N.A. Baas, Ralph Cohen, and A.~Ramirez.
\newblock The topology of the category of open and closed strings.
\newblock {\em Contemporary Math., AMS}, 407:11--26, 2006.

\bibitem[CJS95]{CJS}
Ralph Cohen, J.D.S. Jones, and G.B. Segal.
\newblock Floer's infinite dimensional morse theory and homotopy theory.
\newblock {\em The Floer Memorial Volume, Prog. in Math}, 133:297--325, 1995.

\bibitem[CM]{Co-Mad}
Ralph Cohen and Ib~Madsen.
\newblock Surfaces in a background space and the homology of mapping class
  groups.
\newblock {\em preprint}.
\newblock online at front.math.ucdavis.edu/math.GT/0601750.

\bibitem[Cohar]{Cohen}
Ralph Cohen.
\newblock The floer homotopy type of the cotangent bundle.
\newblock {\em Pure and Applied Math. Quarterly}, issue in honor of M. Atiyah
  and I. Singer(arxiv.org/abs/math/0702852), (to appear).

\bibitem[Cos07]{Costello}
Kevin Costello.
\newblock Topological conformal field theories and calabi-yau categories.
\newblock {\em Advances in Mathematics}, 210(Issue
  1):math.northwestern.edu/~costello/0412149.pdf, March 2007.

\bibitem[GMTW]{GMTW}
S{\o}ren Galatius, Ib~Madsen, Ulrike Tillmann, and Michael Weiss.
\newblock The homotopy type of the cobordism category.
\newblock {\em Acta Math. (to appear)}.
\newblock arxiv.org/abs/math/0605249.

\bibitem[Han]{LizsThesis}
Elizabeth Hanbury.
\newblock Homological stability of mapping class groups and open-closed
  cobordism categories.
\newblock {\em University of Oxford}, Ph.D. Thesis((to appear)).

\bibitem[J{\"{a}}n68]{Janich}
Klaus J{\"{a}}nich.
\newblock On the classification of o(n)-manifolds.
\newblock {\em Math. Annalen}, 176:53--76, 1968.

\bibitem[Lau00]{Laures}
Gerd Laures.
\newblock On cobordism of manifolds with corners.
\newblock {\em Trans. Amer. Math. Soc.}, 12(352):5667--5688, 2000.

\bibitem[Lee03]{JLee}
John~M. Lee.
\newblock {\em Introduction to Smooth Manifolds}, volume 218 of {\em Graduate
  Studies in Mathematics}.
\newblock Springer-Verlag New York, Inc., 2003.

\bibitem[Mil57]{Milnor}
John Milnor.
\newblock The geometric realization of a semi-simplicial complex.
\newblock {\em Ann. of Math}, 2(65):357--362, 1957.

\bibitem[Moo01]{Moore}
G.~Moore.
\newblock Some comments on branes, g-flux, and k-theory, in strings 2000.
\newblock {\em Proceedings of the International Superstrings Conference (Ann
  Arbor, MI)}, 16:936--944, 2001.

\bibitem[MS00]{CircTran}
Ib~Madsen and Christian Schlichtkrull.
\newblock The circle transfer and k-theory localizations.
\newblock {\em Contemp. Math.}, 258:307--327, 2000.

\bibitem[MW02]{M-W}
Ib~Madsen and Michael Weiss.
\newblock The stable moduli space of riemann surfaces: Mumford's conjecture.
\newblock {\em arXiv:math.AT/0212321}, 2002.

\bibitem[Phi67]{Philips}
Anthony Philips.
\newblock Submersions of open manifolds.
\newblock {\em Topology}, 6:171--206, 1967.

\bibitem[Seg04]{Segal}
Graeme Segal.
\newblock The definition of conformal field theory, in topology, geometry and
  quantum field theory.
\newblock {\em London Math. Soc. Lecture Note Series}, 308:421--577, 2004.
\newblock First circulated in 1988.

\bibitem[Sto68]{Sto}
Robert~E. Stong.
\newblock {\em Notes On Cobordism Theory}.
\newblock Mathematical Notes. Princeton University Press, 1968.

\bibitem[Sul71]{Sull}
Dennis Sullivan.
\newblock Geometric periodicity and the invariants of manifolds.
\newblock {\em Lecture Notes in Math.}, 197:44--75, 1971.

\end{thebibliography}

\end{document}